\documentclass[10pt,leqno, final]{amsart}
\usepackage[colorlinks,linkcolor=blue,citecolor=blue,urlcolor=blue]{hyperref}
\usepackage{amsfonts,amssymb}
\usepackage{color}
\usepackage{graphicx}
\usepackage{mathtools}
\usepackage{extarrows}
\usepackage{algorithmic}   
\usepackage{caption}
\usepackage{subcaption}
\usepackage{tikz}
\usepackage{enumitem}
\usepackage[obeyFinal]{todonotes}
\usepackage{comment}

\usepackage{algorithmic} 

\usepackage[notref,notcite]{showkeys}

\newtheorem{theorem}{Theorem}[section]
\newtheorem{lemma}[theorem]{Lemma}
\newtheorem{corollary}[theorem]{Corollary}

\newtheorem{assumption}[theorem]{Assumption}

\theoremstyle{definition}
\newtheorem{definition}[theorem]{Definition}
\newtheorem{example}[theorem]{Example}
\newtheorem*{example*}{Example}
\newtheorem{remark}[theorem]{Remark}
\newtheorem{algorithm}[theorem]{Algorithm}

\newcommand{\DD}{\mathcal{D}}

\newcommand{\FF}{\mathcal{F}}

\newcommand{\JJ}{\mathcal{J}}

\newcommand{\PP}{\mathcal{P}}

\newcommand{\UU}{\mathcal{U}}

\newcommand{\XX}{\mathcal{X}}
\newcommand{\YY}{\mathcal{Y}}
\newcommand{\ZZ}{\mathcal{Z}}

\newcommand{\N}{\mathbb{N}}

\newcommand{\R}{\mathbb{R}}

\newcommand{\W}{\mathbb{W}}

\newcommand{\frakM}{\mathfrak{M}}

\newcommand{\embed}{\hookrightarrow}

\newcommand{\weak}{\rightharpoonup}

\newcommand{\conv}{\operatorname{conv}}

\newcommand{\dom}{\operatorname{dom}}

\newcommand{\interior}{\operatorname{int}}

\renewcommand{\d}{\mathrm{d}}

\newcommand{\dual}[2]{\langle #1 , #2 \rangle}

\newcommand{\TV}{\mathrm{TV}}
\newcommand{\BV}{\mathrm{BV}}

\begin{document}

	\title[Switching Point Optimization for Abstract Parabolic Equations]
	{Switching Point Optimization for Abstract Parabolic Equations}\thanks{
		The authors acknowledge funding by Deutsche
		Forschungsgemeinschaft (DFG) under grant no.\ ME 3281/12-1.}
	
	\author{Christoph Buchheim} \address{Technische Universit\"at Dortmund, Fakult\"at f\"ur
		Mathematik, Lehrstuhl LSV, Vogelpothsweg 87, 44227 Dortmund, Germany}
	\email{christoph.buchheim@tu-dortmund.de}

	\author{Christian Meyer} \address{Technische Universit\"at Dortmund, Fakult\"at f\"ur
		Mathematik, Lehrstuhl LSX, Vogelpothsweg 87, 44227 Dortmund, Germany}
	\email{christian2.meyer@tu-dortmund.de}

	\author{Alimhan Musalatov} \address{Technische Universit\"at Dortmund, Fakult\"at f\"ur
		Mathematik, Lehrstuhl LSX, Vogelpothsweg 87, 44227 Dortmund, Germany}
	\email{alimhan.musalatov@tu-dortmund.de}

	\subjclass[2010]{49K20, 49M37, 90C30, 90C26} 
	
	\date{\today} 
	
	\keywords{Switching point optimization, maximal parabolic regularity, proximal gradient method, 
	convexification, extended formulations}
	
	\begin{abstract} 
		This work is concerned with a switching point optimization problem governed by a
		semilinear parabolic equation in abstract function spaces. It is shown that the 
		switching-point-to-control mapping is continuously Fr\'echet-differentiable when 
		considered with values in the dual of H\"older continuous functions in time. 
		By treating the state equation in weak form based on the concept of maximal parabolic regularity, 
		one can then show that the 
		reduced objective is continuously differentiable w.r.t.\ the switching points, which allows 
		to use gradient-based methods like the proximal gradient method for its minimization. 
		Numerical experiments confirm our theoretical findings, but also illustrate 
		that such a method will in general not be able to solve the problem to global 
		optimality due to the non-convex nature of the switching-point-to-control map.
		We therefore give a precise characterization of the convex hull of set of feasible switching 
		functions in terms of an extended formulation. The latter might be useful for a branch-and-bound 
		approach for the computation of global minimizers, but this is subject to future research.
	\end{abstract}
	
	\maketitle
	
	\section{Introduction}
	
    In this work, we investigate an optimal control problem governed by an abstract parabolic equation, 
	where the control is given in terms of a switching function in time. To be more precise, the state equation under 
	consideration reads
	\begin{equation}\label{eq:stateeq}
		y'(t) + A y(t) + f(y(t)) = \sum_{i=1}^{n-1}\chi_{[\tau_i, \tau_{i+1})}(t)\,\psi_i, 
		\quad t \in (0,T),\quad y(0) = y_0,
	\end{equation}
	where $T>0$ is a given final time, $A:X \supset \DD \to X$ is a linear operator in reflexive Banach spaces 
	$\DD$ and $X$, and $f: \DD \to X$ is a nonlinear mapping. 
	Moreover, $\chi_{[\tau_i, \tau_{i+1})}$ denotes the characteristic function of the interval $[\tau_i, \tau_{i+1})$, i.e., 
	\begin{equation*}
	    \chi_{[\tau_i, \tau_{i+1})}(t)
	    := 
	    \begin{cases}
	        1, & t \in [\tau_i, \tau_{i+1}),\\
	        0, & \text{else}
	    \end{cases}
	\end{equation*}
	and $\psi_i\in X$, $i = 1, \ldots, n-1$, are given functions.
	The optimization variables are the switching times $\tau_1, \ldots, \tau_n \in \R$.
	Formally, the optimization problem under consideration reads
	\begin{equation}\label{eq:ocp0}
		\left\{\quad
		\begin{aligned}
			\min \quad & J(y,\tau) \\
			\text{s.t.} \quad & \text{$y$ solves \eqref{eq:stateeq}},\quad
			\tau \in \PP.
		\end{aligned}
		\right.
	\end{equation}   
    Herein, $J$ is a given objective and $\PP$ denotes the set of feasible switching points, which is defined by
	\begin{equation}\label{eq:feaspoly}
		\PP := \{ \tau = (\tau_1, \ldots, \tau_n) \colon 0 \leq \tau_1 \leq \tau_2 \leq \ldots\leq \tau_n \leq T\}.
	\end{equation}
    The precise assumptions on the data as well as a rigorous statement of the optimization problem
    including a precise notion of solutions to \eqref{eq:stateeq} will be given in Section~\ref{sec:assus} below.

	\begin{remark}
		The restrictions in $\PP$ model that only one inhomogeneity can be active at a time. 
		This is motivated by the following example: Suppose that 
		\begin{equation}\label{eq:switch}
			\psi_i :=
			\begin{cases}
				0, & i \text{ even},\\
				\psi, & i \text{ odd},
			\end{cases}
		\end{equation}
		with a given form function $\psi$. Then \eqref{eq:ocp0} corresponds to a switching process, 
		where the source $\psi$ can be switched on resp.\ off for $n$ times and these switching times 
		represent the optimization variables.
	\end{remark}
	
	Though finite dimensional, optimizing \eqref{eq:ocp0} -- even only locally -- is a non-trivial problem
	because of the lack of differentiability of the solution operator of \eqref{eq:stateeq} mapping $\tau$ to $y$. 
    This turns the problem into a non-smooth and non-convex optimization problem, even if $f$
    in \eqref{eq:stateeq} is linear-affine and the objective~$J(y,\tau)$ in \eqref{eq:ocp0} is convex in both~$y$ and~$\tau$. 
    We overcome this issue by employing a weak solution concept 
    for the state equation along with a modification of the switching-point-to-control operator 
    $\tau \mapsto \sum_{i=1}^{n-1}\chi_{[\tau_i, \tau_{i+1})}\,\psi_i$ outside the feasible set $\PP$. 
    This allows us to use a standard proximal gradient method for the solution of \eqref{eq:ocp0}. 
    However, our numerical experiments show that, in general, the algorithm converges to stationary points
    that are no global minimizers, even if the state equation is linear and the objective is quadratic and strictly convex. 
    This underlines the non-convex nature of the problem under consideration. 
    A global optimization of \eqref{eq:ocp0} therefore requires other methods such as 
    branch-and-bound algorithms that build on the computation of dual bounds based on 
    tailored convexifications. 
    In preparation of the latter, we provide 
    a precise characterization of the convex hull of the set of feasible switching functions by means of an extended formulation. 
        
    Let us put our work into perspective. There are numerous contributions to switching point optimization 
    problems, in particular in the context of optimal control of ordinary differential equations (ODEs).
    Especially in the engineering literature, 
    it is frequently claimed that the operator mapping the switching times $\tau$ to the state and the objective, respectively, 
    is differentiable. This however is a delicate issue, even in the ODE case. The differentiability properties of 
    these mappings strongly depend on the underlying functional analytical setting, in particular the choice of state space. 
    If the latter  
    is chosen to be continuous in time, as for instance in case of objectives containing point evaluations of the state,
    then the mapping of $\tau$ to the (reduced) objective may fail to be differentiable. This concerns point evaluations 
    at fixed, prescribed time points as well as point evaluations at the switching points $\tau_k$. 
    To illustrate this aspect, we include an elementary ODE counterexample in Appendix~\ref{sec:nodiff}, 
    see, in particular, Remark~\ref{rem:nodiff}.
    
    Differentiability issues for switching point optimization problems have been addressed by several authors, 
    we only mention \cite{EWD03, EWA06}, where the directional differentiability of the objective of an ODE-constrained problem 
    w.r.t.\ the switching times is investigated. The proof however illustrates that this mapping is in general 
    only directionally differentiable. 
    This issue has also been noticed in Appendix A in \cite{SOBG17}, but, despite this lack of differentiability, 
    algorithms from (smooth) nonlinear optimization are frequently applied to switching time optimization problems, 
    see, e.g., \cite{SOBG17} or the survey article \cite{ZA15} containing an overview over various algorithmic approaches 
    in this field. 
    We underline that the use of smooth nonlinear optimization algorithms for the 
    solution of nonsmooth problems is problematic, 
    as the famous example of the Wolfe-function demonstrates, 
    see, e.g., \cite[Section~1.3]{Alt04}.
    In \cite{DMG20}, a proximal gradient scheme is used
    to handle an $\ell_0$-penalty term on the switching points, 
    while the rest of the objective is assumed to be continuously differentiable w.r.t.\ the switching times. 
    The situation improves if one restricts the feasible set to switching patterns
    with strictly increasing switching times separated from the initial and final time, i.e., $0 < \tau_1 < \ldots < \tau_n < T$.
    In \cite{AH21, MBKK05}, it is shown that the objective and the solution operator of the state equation, respectively, 
    for certain types of ODE-constrained optimal control problems, are differentiable under this assumption.
    This approach
    has the drawback that topological changes in the switching pattern are not allowed, 
    since two switching points are forbidden to be equal.
    In \cite{FMOB13}, the authors observe that, even after discretization of the ODE, the objective is not differentiable w.r.t.\ 
    the switching points and for that reason resort to subgradient methods to solve the problem numerically. 
    The latter, however, only works for convex problems and can in general not be applied to \eqref{eq:ocp0}, which 
    is a non-convex problem, even if the state equation is linear and the objective is convex. Indeed, 
    our numerical experiments indicate that the switching-point-to-control operator is a substantial source of non-convexity.
    
    The PDE case is analyzed in \cite{RH16}, where differentiability of the reduced objective w.r.t.\ the 
    switching points on the feasible set $\PP$ is shown. The results show that the derivative can be continuously extended 
    from the set of strictly increasing switching points to the feasible set $\PP$. 
    Unfortunately, for the application of standard gradient-based methods, this does 
    not suffice, since these methods 
    in general require differentiability on an open superset of the feasible set. 
    This is even more true for non-feasible methods such as proximal gradient methods with 
    Nesterov acceleration like FISTA or active set strategies.  
    We will therefore construct a mapping that coincides with the switching-point-to-control operator on the feasible set
    but is continuously differentiable on $\R^n$.
    Our method of proof substantially differs from the one in \cite{RH16}. 
    While we treat the state equation on the whole time horizon in
    weak form, the authors from \cite{RH16} consider the equation piecewise in the intervals defined by the 
    switching times.
    With our differentiability results at hand, we are able to apply the known convergence theory for 
    the proximal gradient method for non-convex problems to our switching-point optimization problem.
    
    An alternative approach to optimization problems of the form \eqref{eq:ocp0} is to reinterpret them as 
    optimal control problems in $\BV([0,T];\{0,1\})$, the space of functions of bounded variation with values in $\{0,1\}$. 
    This approach has been pursued by various authors, we only mention \cite{LM22} and \cite{MW23}, where 
    the equivalence to a switching point optimization problem is used to establish optimality conditions. 
    This reinterpretation leads to a bunch of alternative algorithmic approaches such as 
    tailored trust region methods \cite{LM22}, Bellmann's dynamic programming approaches \cite{MW23}, 
    rounding strategies, see \cite{SJK11, HS13, MK20}
    and the references therein, and convex relaxations in combination with
    branch-and-bound \cite{BGM24a, BGM24b, BGM25}, 
    just to mention a few methods.\todo{hier jetzt noch den Bezug unserer Arbeit zu 
    Abschnitt~\ref{sec:global} erwaehnen}
    	
    	The plan of this paper is as follows:
    	In the next section, we introduce a mathematically rigorous definition of the switching point optimization problem 
    	including a  rigorous notion of solutions to the state equation based on the concept of maximal parabolic regularity. 
    	As we will see by means of elementary counterexamples, the switching-point-to-control operator is not differentiable 
    	in Bochner-Lebesgue spaces, but in Section~\ref{sec:diffbarkeit}, it is shown to be continuously Fr\'echet-differentiable 
    	in the dual of certain H\"older-in-time spaces. Together with a weak solution concept for the state equation, 
    	this allows us to prove the differentiability of the switching-point-to-state operator considered as mapping with values 
    	in $L^p(0,T)$, $p \in [1, \infty)$. Based on this, the derivation of first-order necessary optimality conditions in form 
    	of Karush-Kuhn-Tucker (KKT) conditions is straightforward. 
    	Section~\ref{sec:prox} is devoted to a proximal gradient method in form of a projected gradient method for the 
    	solution of \eqref{eq:ocp0}. 
    	Our theoretical findings from Section~\ref{sec:diffbarkeit} allow us to apply convergence results
    	from the literature to our proximal point algorithm. 
    We emphasize that it is our aim to use a method as robust as possible that allows for a rigorous 
    convergence analysis in function space rather than to develop a method with maximum efficiency. 
    Section~\ref{sec:numerics} reports the results of four numerical tests with known exact solution. 
    As we will see, in agreement with the theoretical predictions, the proximal gradient method converges to 
    stationary point, but is rarely able to detect global minimizers due to the non-convexity caused 
    by the switching-point-to-control operator.
    In preparation of a branch-and-bound algorithm for the global optimization of \eqref{eq:ocp0}, 
    we therefore investigate the convex hull of all pairs of feasible switching vectors~$\tau$ 
    and their correspoding switching functions
    and provide a precise characterization of the latter in terms of linear (in-)equalities, see Section~\ref{sec:convex}.
    The paper ends with an appendix containing auxiliary results and the ODE-counterexample already mentioned above.
	
	\section{Notation, Standing Assumptions, and Rigorous Mathematical Formulation of the Optimization Problem}
    \label{sec:assus}
	
	Given a metric space $\XX$, we denote the open ball in $\XX$ around $0$ with radius $\rho > 0$
	by $B_\XX(0,\rho)$.
	If $\XX$ and $\YY$ are two linear normed spaces, the space of linear and bounded operators 
	from $\XX$ to $\YY$ is denoted by $L(\XX,\YY)$. The dual of $\XX$ is denoted by $\XX^*$ and 
	for the dual pairing, we write $\dual{\cdot}{\cdot}_{\XX^*, \XX}$, where we frequently neglect the subscript 
	if there is no risk of ambiguity. By 
	\begin{equation*}
        \BV(0,T) := \{ v \in L^1(0,T) : \, D v \in \frakM(0,T) \},
    \end{equation*}
	we denote the space of functions of bounded variation. Herein, $D v$ denotes the distributional derivative of $v$ 
	and $\mathfrak{M}(0,T)$ is the space of regular Borel measures on $(0,T)$, which is the dual of $C_0(0,T)$, 
	the space of continuous functions vanishing in $t = 0$ and $t = T$.	Equipped with the norm
	\begin{equation*}
	    \|v\|_{\BV(0,T)} := \|v\|_{L^1(0,T)} + |v|_{\TV}
	    \quad \text{with}\quad 
	    |v|_{\TV} := \|Dv\|_{\frakM(0,T)},
	\end{equation*}
	$\BV(0,T)$ becomes a Banach space.
	
	\begin{assumption}[Spaces]
		\begin{enumerate}[label=\textup{(\roman*)}]
			\item $T > 0$ is a fixed final time.
			\item $X$ and $\DD$ are reflexive Banach spaces with $\DD \embed^d X$.
			\item $A: X \to X$ is a linear, unbounded, and closed operator with $\DD$ being its domain of definition, i.e., 
			$\DD = \{x\in X \colon \|Ax\|_X < \infty\}$.
			We moreover assume that zero is not contained in the spectrum of $A$ so that $\|A \cdot \|_X$ is an 
			equivalent norm on $\DD$.
		\end{enumerate}
		\label{assu:spaces}
	\end{assumption}
	
    Given the spaces $X$ and $\DD$, numbers $a, b\in \R$, $a < b$, and an integrability exponent $q \in [1, \infty)$,
    we introduce the parabolic solution space as usual by	
	\begin{equation*}
		\W^{q}(a,b; \DD, X) 
		:= W^{1,q}(a,b;X) \cap L^q(a,b;\DD).
	\end{equation*}
    Herein, $L^q(a,b;\DD)$ denotes the Bochner-Lebesgue space of all Bochner-measurable 
    abstract functions with values in $\DD$ that are Bochner integrable to the power $p$, 
    while $W^{1,q}(a,b;X)$	is the Bochner-Sobolev space, i.e., all functions in $L^q(a,b;X)$ whose 
    distributional time derivative is an element of $L^q(a,b;X)$, too.
    If $(a,b) = (0,T)$, we simply write $\W^{q}(\DD, X) := \W^{q}(0,T; \DD, X)$.
	Given $\theta \in (0,1)$ and $r\in [1, \infty]$, we denote the real interpolation space between $X$ and $\DD$ by 
	\begin{equation*}
		Y_{\theta, r} := (X, \DD)_{\theta, r}. 
	\end{equation*}
	If $r\in (1, \infty)$ and $\theta = 1/r'$, where $r' = r/(r-1)$ denotes the conjugate exponent, we write 
	\begin{equation*}
		Y_r := (X, \DD)_{1/r', r}.
	\end{equation*}
	It is known that $\W^q(\DD,X)$ continuously embeds into $C([0,T];Y_q)$, see, e.g., \cite[Theorem~III.4.10.2]{Ama95}.
	Consequently, there is a linear and continuous trace operator $\gamma_0: \W^q(\DD,X) \to Y_q$ 
	such that $\gamma_0(v) = v(0)$ in $Y_q$ for all $v\in \W^q(\DD,X)$.
	The subspace of $\W^q(\DD,X)$ of functions with vanishing trace is denoted by 
	\begin{equation}\label{eq:W0}
		\W^q_0(\DD,X) := \{ v\in \W^q(\DD,X) \colon \gamma_0 v = 0\}.
	\end{equation}
		
	\begin{assumption}[Nonlinearity]\label{assu:f}
		Here and in the following, we will consider the nonlinearity $f$ in different domains and ranges. To ease 
		notation, it will always be denoted by the same symbol.
        We assume that there exists an exponent $q \in (1, \infty)$ such that the following conditions are fulfilled:			
		\begin{enumerate}[label=\textup{(\roman*)}]
			\item\label{it:f1}
			The nonlinearity $f$ is continuous from $\DD$ to $Y_q = (X, \DD)_{1/q', q}$.
			Moreover, it satisfies the growth condition
			\begin{equation*}
				\|f(v)\|_{Y_q} \leq c_1 + c_2 \, \|v\|_{\DD}^{q} \quad \forall \, v\in \DD
			\end{equation*} 
			with constants $c_1 \in \R$ and $c_2 \geq 0$.   
			
			
			\item\label{it:f3} $f$ is continuously Fr\'echet-differentiable from $\DD$ to $Y_q$.
			Moreover, the Fr\'echet-derivative satisfies the growth condition
			\begin{equation*}
				\|f'(v)\|_{L(\DD;Y_q)} \leq d_1 + d_2 \, \|v\|_{\DD}^{q/r} \quad \forall \, v\in \DD
			\end{equation*}
			with an exponent satisfying $ r > q'$ and constants $d_1 \in \R$ and $d_2 \geq 0$. 
			%
		\end{enumerate}
	\end{assumption}
	
	\begin{lemma}\label{lem:fFrechet}
	    Under Assumption~\ref{assu:f}, 
		the Nemyzki operator associated to $f$, for the ease of notation denoted by the same symbol, 
		continuously maps $L^{q}(0,T;\DD)$ to $L^1(0,T;Y_q)$. 
		Furthermore, it is continuously Fr\'echet-differentiable from $L^{q}(0,T;\DD)$ to $L^1(0,T;Y_q)$ 
		with Fr\'echet-derivative
		\begin{equation*}
			(f'(v)h)(\cdot ) = f'(v(\cdot)) h(\cdot), \quad v, h\in L^q(0,T;\DD).
		\end{equation*}
	\end{lemma}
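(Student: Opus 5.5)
We follow the classical route for Nemytskii operators between Bochner spaces: measurability, growth bounds, continuity via Vitali/dominated convergence, then differentiability via the mean value theorem in integral form together with continuity of the derivative operator.

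\emph{Well-definedness and continuity.} For $v\in L^q(0,T;\DD)$, $v$ is the a.e.\ limit of simple functions $v_k$. Since $f:\DD\to Y_q$ is continuous, $f(v_k(\cdot))\to f(v(\cdot))$ a.e., and each $f(v_k(\cdot))$ is simple, so $f(v(\cdot))$ is Bochner measurable. The growth condition in Assumption~\ref{assu:f}\ref{it:f1} gives
\[
\|f(v(t))\|_{Y_q}\le c_1+c_2\|v(t)\|_\DD^q\in L^1(0,T),
\]
so $f(v)\in L^1(0,T;Y_q)$. For continuity, let $v_n\to v$ in $L^q(0,T;\DD)$. By the subsequence principle it suffices to treat a subsequence with $v_n\to v$ a.e.\ and $\|v_n(t)\|_\DD\le g(t)$ for some $g\in L^q(0,T)$ (partial converse of dominated convergence). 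Then $\|f(v_n(t))-f(v(t))\|_{Y_q}\le 2c_1+c_2(g^q+\|v\|_\DD^q)\in L^1$, and the integrand tends to zero a.e., so dominated convergence gives $f(v_n)\to f(v)$ in $L^1(0,T;Y_q)$.

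\emph{Derivative candidate.} Set $G(v)h:=f'(v(\cdot))h(\cdot)$. Measurability follows as before, using continuity of $f'$ and an approximation of $v$ and $h$ by simple functions. By Hölder with exponents $r$ and $r'$,
\[
\|f'(v(t))h(t)\|_{Y_q}\le (d_1+d_2\|v(t)\|_\DD^{q/r})\|h(t)\|_\DD .
\]
Here $(d_1+d_2\|v\|^{q/r})\in L^r(0,T)$ because $\|v\|_\DD^q\in L^1$. Since $r>q'$, we have $r'<q$, so $h\in L^{r'}(0,T;\DD)$ with $\|h\|_{L^{r'}}\le C\|h\|_{L^q}$. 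Hence $G(v)\in L(L^q(0,T;\DD),L^1(0,T;Y_q))$, with norm at most $C\|d_1+d_2\|v\|^{q/r}\|_{L^r}$.

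\emph{Fréchet differentiability.} Pointwise, the mean value theorem in integral form gives
\[
f(v+h)-f(v)-f'(v)h=\int_0^1\bigl(f'(v+sh)-f'(v)\bigr)h\,\d s .
\]
Therefore
\[
\|f(v+h)-f(v)-G(v)h\|_{L^1(Y_q)}\le \int_0^1\|\Phi(v+sh)-\Phi(v)\|_{L^{r}(0,T;L(\DD,Y_q))}\,\d s\;\|h\|_{L^{r'}(\DD)},
\]
where $\Phi(w)(t):=f'(w(t))$. Continuity of $G$ in operator norm is bounded by the same quantity $\|\Phi(w_n)-\Phi(w)\|_{L^r(0,T;L(\DD,Y_q))}$. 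Both claims therefore reduce to one fact: $\Phi$ is a continuous Nemytskii operator from $L^q(0,T;\DD)$ to $L^r(0,T;L(\DD,Y_q))$. This is the main obstacle. A first difficulty is that $L(\DD,Y_q)$ is not separable, so Bochner measurability of $\Phi(w)$ is unclear. We avoid it by never needing measurability: we work with the scalar function $t\mapsto\|f'(w_n(t))-f'(w(t))\|_{L(\DD,Y_q)}$, which is measurable as the a.e.\ limit of the functions obtained from simple approximations, using continuity of $f'$. We then apply the same subsequence and dominated-convergence argument as in the continuity step, now with the dominating function $(d_1+d_2 g^{q/r})^r\lesssim 1+g^q\in L^1$. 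The $\d s$-integral is handled by dominated convergence in $s$ as well, since $\|sh\|\to0$ uniformly in $s$ and $v+sh_n$ admits a common dominating function. Finally, the subsequence principle upgrades these convergences to the full sequences, which yields the remainder estimate $o(\|h\|_{L^q})$ and continuity of $v\mapsto G(v)$.
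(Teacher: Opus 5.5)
Your argument is correct. The paper gives no argument of its own here: it simply cites the general theory of Nemytskii operators in Bochner spaces from \cite{GKT92}, using Theorems~1 and~4 there for the mapping property and continuity, and Theorem~7 for Fr\'echet differentiability. You instead give a self-contained proof of the special case at hand, which is essentially the mechanism behind those cited results:
strong measurability via simple approximations, the growth bound together with the partial converse of dominated convergence for continuity, and the integral mean value theorem plus H\"older's inequality. The last step reduces both the remainder estimate and the continuity of $v\mapsto f'(v(\cdot))$ to continuity of $\Phi\colon L^q(0,T;\DD)\to L^r(0,T;L(\DD,Y_q))$.

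The citation is shorter and covers non-autonomous Carath\'eodory nonlinearities. Your route makes visible where the hypotheses enter. For instance, $r>q'$ is used only through $r'<q$, so $r\geq q'$ would already suffice for this lemma. The strict inequality is needed later, for the contraction factor $t^{(r-q')/(rq')}$ in the proof of Lemma~\ref{lem:adjointeq}.

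One small remark: the measurability difficulty you flag for the non-separable space $L(\DD,Y_q)$ does not actually arise. If $w_k\to w$ a.e.\ with $w_k$ simple, then the functions $f'(w_k(\cdot))$ are simple and converge a.e.\ in operator norm by continuity of $f'$. Hence $t\mapsto f'(w(t))$ is strongly measurable, and working with scalar norms is harmless but unnecessary.
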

	
	\begin{proof}
		The first assertion follows from \cite[Theorem~1 and 4]{GKT92}. The second one 
		is a consequence of \cite[Theorem~7]{GKT92}.
	\end{proof}
	
	Using again the embedding results of \cite[Theorem~III.4.10.2]{Ama95}, we find
	\begin{equation}\label{eq:maxparregembeddual}
		\W^{q'}(X^*, \DD^*) \embed C([0,T]; (\DD^*, X^*)_{1/q, q'}) 
	\end{equation}
	and, due to the duality of real interpolation functors, there holds
	\begin{equation}\label{eq:functordual}
		Y_q^* = [(X, \DD)_{1/q', q}]^* = (X^*, \DD^*)_{1/q', q'} = (\DD^*, X^*)_{1/q, q'} .
	\end{equation}
	Therefore, the trace operator at final time $\gamma_T:  \W^{q'}(X^*,\DD^*) \to Y_q^*$ is well defined 
	and, similarly to \eqref{eq:W0}, we introduce the following space for functions with vanishing trace at final time:
	\begin{equation*}
		\W_T^{q'}(X^*, \DD^*) 
		:= \{v\in W^{1,q'}(0,T;\DD^*) \cap L^{q'}(0,T;X^*) \colon \gamma_T v = 0\}.
	\end{equation*}
	In all what follows, we will simply write $\dual{\cdot}{\cdot}$ for the dual pairing in $\W_T^{q'}(X^*,\DD^*)$
	and omit the index to ease notation.
	Moreover, given a function $u \in L^q(0,T;X)$, the mapping 
	\begin{equation}\label{eq:embedLqWstar}
		\W_T^{q'}(X^*,\DD^*) \ni v \mapsto \int_0^T \dual{u(t)}{v(t)}_{X, X^*}  \,\d t \in \R
	\end{equation}
	defines an element of $[\W_T^{q'}(X^*,\DD^*)]^*$, which, with a slight abuse of notation, will be also denoted by $u$. 
	
	\begin{assumption}[Initial state]
		The initial state $y_0$ is fixed throughout the paper and satisfies $y_0\in Y_q$.
	\end{assumption}

	\begin{definition}
		Consider the equation 
		\begin{equation}\label{eq:stateeqg}
			y' + A y + f(y) = u,\quad y(0) = y_0.
		\end{equation}
		\begin{enumerate}[label=\textup{(\roman*)}]
			\item Let $u\in L^q(0,T;X)$ be given.
			A function $y$ is called \emph{strong solution} of \eqref{eq:stateeqg}, 
			if it satisfies $y\in \W_0^q(\DD, X)$ and fulfills the equation a.e.\ in time, i.e.,
			\begin{alignat}{3}
				y'(t) + A y(t) + f(y(t)) &= u(t) & \quad & \text{in } X \quad \text{f.a.a.\ } t\in (0,T), \label{eq:strong1}\\
				y(0) &= y_0 & & \text{in } Y_q. \label{eq:strong2}
			\end{alignat}
			\item Let $u\in [\W_T^{q'}(X^*, \DD^*)]^*$ be given.
			A function $y$ is called \emph{weak solution} of \eqref{eq:stateeqg}, 
			if it satisfies $y \in L^q(0,T;\DD)$ and fulfills the state equation in the following weak sense:
			\begin{equation}\label{eq:weak}
				\begin{aligned}
					\int_0^T \big( & - \dual{y(t)}{v'(t)}_{\DD,\DD^*}  + \dual{y(t)}{A^*v(t)}_{\DD,\DD^*} 
					+ \dual{f(y(t))}{v(t)}_{Y_q, Y_q^*} \big)\,\d t   \\[-1ex]
					& = \dual{u}{v} + \dual{y_0}{\gamma_0 v}_{Y_q, Y_q^*}
					\quad \forall\, v\in \W_T^{q'}(X^*,\DD^*) .
				\end{aligned}
			\end{equation}
		\end{enumerate}
		We will employ the notions of weak and strong solutions also for other parabolic equations involving $A$, 
		in particular linear equations. Since the adaptation of this definition is straight forward, we do not go into detail.
	\end{definition}
	
	Some words concerning the nonlinearity in the definition of a weak solution are in order. 
	First note that, by \eqref{eq:maxparregembeddual} and \eqref{eq:functordual}, 
	there holds that $\W_T^{q'}(X^*,\DD^*) \embed L^\infty(0,T;Y_q^*)$. 
	In addition, since $X$ and $\DD$ are reflexive, $Y_q$ satisfies the Radon-Nikod\'ym condition
	and thus we have $[L^1(0,T;Y_q)]^* = L^{\infty}(0,T;Y_q^*)$. 
	Therefore, since $f$ maps $L^q(0,T;\DD)$ into $L^1(0,T;Y_q)$
	according to Lemma~\ref{lem:fFrechet}, the integral 
	$\int_0^T \dual{f(y)}{v}_{Y_q, Y_q^*}\,\d t$ is well defined for every $y\in L^q(0,T;\DD)$ and $v\in \W_T^{q'}(X^*,\DD^*)$.
	
	\begin{definition}
		Let $q\in [1, \infty)$ be given.
		A linear and bounded operator $A : \DD\to X$ has \emph{maximal parabolic $L^q$-regularity} in these spaces, if,
		for every $g\in L^q(0,T;X)$, the equation
		\begin{equation*}
			y' + A y = g, \quad y(0) = 0
		\end{equation*}
		admits a unique strong solution in $\W_0^q(\DD, X)$, i.e., the operator 
		$\partial_t + A : \W_0^q(\DD, X) \to L^q(0,T;X)$ is continuously invertible.
	\end{definition}
	
	The next lemma collects some useful facts on maximal parabolic regularity.
	For its proof and further details, we refer to \cite[Section~2 and 3]{Ama03}.
    The claim in \ref{it:maxparreg3} is proven in \cite[Lemma~42]{HMS17}.
    
	\begin{lemma}\label{lem:maxparreg}
		\begin{enumerate}[label=\textup{(\roman*)}]
			\item A linear and bounded operator $A$ has maximal parabolic $L^q$-regularity if and only if
			the operator $(\partial_t + A, \gamma_0): \W^q(\DD,X) \to L^q(0,T;X) \times Y_q$ 
			is continuously invertible.
			\item If an operator $A$ has maximal parabolic $L^q$-regularity for some $q\in [1, \infty)$, 
			then it has maximal parabolic $L^s$-regularity for all $s\in [1, \infty)$.
			\item\label{it:maxparreg3}
			If an operator $A: \DD\to X$ has maximal parabolic $L^q$-regularity, then its adjoint 
			$A^* : X^* \to \DD^*$ admits maximal parabolic $L^{q'}$-regularity such that the operator 
			$(-\partial_t + A^*, \gamma_T): \W^{q'}(X^*,\DD^*) \to L^{q'}(0,T;\DD^*) \times Y_q^*$ is continuously invertible.
		\end{enumerate}
	\end{lemma}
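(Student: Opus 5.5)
For \ref{it:maxparreg3} and the other two items I would argue as follows, treating the three claims separately and in the order (i), (iii), (ii).

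For (i), one direction is immediate. If $(\partial_t + A, \gamma_0)$ is an isomorphism from $\W^q(\DD,X)$ onto $L^q(0,T;X)\times Y_q$, then restricting to $\ker\gamma_0 = \W^q_0(\DD,X)$ shows that $\partial_t + A : \W_0^q(\DD,X)\to L^q(0,T;X)$ is an isomorphism.

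For the converse, I would use the fact that $Y_q=(X,\DD)_{1/q',q}$ is exactly the trace space of $\W^q(\DD,X)$ (cf.\ \cite[Theorem~III.4.10.2]{Ama95}). This means $\gamma_0$ is not only continuous but surjective, with a bounded right inverse $E: Y_q\to \W^q(\DD,X)$; such an $E$ can be built, for instance, via $t\mapsto \varphi(t)e^{-tA}y_0$ or via the $K$-/trace method of interpolation. Given $(g,y_0)$, I set $y := Ey_0 + z$, where $z\in\W^q_0(\DD,X)$ solves
\begin{equation*}
z'+Az = g - (Ey_0)' - AEy_0, \qquad z(0)=0 .
\end{equation*}
Uniqueness of $y$ follows from injectivity on $\W^q_0$. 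Continuity of the inverse follows either from the explicit bound in this construction or from the open mapping theorem.

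For (iii), I first use the time reversal $t\mapsto T-t$. It turns $-\partial_t+A^*$ with a final condition into $\partial_t + A^*$ with an initial condition, so it suffices to prove maximal $L^{q'}$-regularity of $A^*$ in the pair $(X^*,\DD^*)$ and then invoke (i). In that application the trace space is $(\DD^*,X^*)_{1/q,q'} = Y_q^*$ by \eqref{eq:functordual}.

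To obtain maximal regularity of $A^*$, I would proceed in four steps.
\begin{enumerate}[label=\textup{(\alph*)}]
\item Maximal regularity of $A$ forces $-A$ to generate an analytic semigroup. Hence the solution operator is $g\mapsto \int_0^t A e^{-(t-s)A} g(s)\,\d s$, bounded on $L^q(0,T;X)$.
\item Since $X$ is reflexive, $[L^q(0,T;X)]^* = L^{q'}(0,T;X^*)$. The Banach-space adjoint of this operator is the time-reversed convolution with kernel $A^*e^{-tA^*}$, which is therefore bounded on $L^{q'}(0,T;X^*)$.
\item This gives maximal regularity of $A^*$ with domain $D(A^*)\subset X^*$.
\item To move to the pair $(X^*,\DD^*)$, I conjugate with the isomorphism $A^*: X^*\to\DD^*$ (the adjoint of $A:\DD\to X$, invertible since $0\notin$ spectrum). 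This maps the realization on $X^*$ to the extrapolated realization on $\DD^*$ with domain $X^*$, and maximal regularity transfers under this similarity.
\end{enumerate}
I expect the main obstacle to be the careful identification of the extrapolated operator on $\DD^*$ with the adjoint of $A$, and the check that the semigroups are compatible.

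For (ii), the independence of $q$, I would follow the standard Calderón--Zygmund route. Step (a) above gives the resolvent and semigroup bounds
\begin{equation*}
\|Ae^{-tA}\|\le C/t, \qquad \|A^2e^{-tA}\|\le C/t^2 .
\end{equation*}
These imply the Hörmander condition for the operator-valued kernel $k(t)=Ae^{-tA}\chi_{(0,\infty)}(t)$. By the vector-valued singular integral theorem (Benedek--Calderón--Panzone), $L^q$-boundedness for one $q$ then yields weak $(1,1)$ boundedness and hence boundedness on $L^s$ for $1<s\le q$. The range $s>q$ follows by applying the same argument to the adjoint kernel, using the duality from (iii).

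The endpoint $s=1$ needs separate care. There I would only expect the statement in the sense intended by \cite{Ama03}, or otherwise restrict to $s\in(1,\infty)$, which is all that is used later.
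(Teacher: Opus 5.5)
Your proposal is correct, but the paper does not prove this lemma itself: it defers (i) and (ii) to \cite[Sections~2 and~3]{Ama03} and (iii) to \cite[Lemma~42]{HMS17}. Your sketch reconstructs the standard arguments behind those citations:
\begin{itemize}
\item for (i), the trace method, i.e.\ surjectivity of $\gamma_0$ onto $Y_q$ with a bounded right inverse;
\item for (iii), Dore's theorem that maximal regularity yields an analytic semigroup (the paper itself uses this later via \cite{Dor93}), the duality $[L^q(0,T;X)]^*=L^{q'}(0,T;X^*)$ on reflexive $X$, and a similarity transform;
\item for (ii), operator-valued Calder\'on--Zygmund theory.
\end{itemize}
Your step (d), which you flagged as the main obstacle, does go through. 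For $\xi\in D(A^*_{X^*})$ one has $\xi\circ A=A^*_{X^*}\xi$ in $\DD^*$. Hence the isomorphism $\Phi:=A^*:X^*\to\DD^*$ maps $D(A^*_{X^*})$ onto $X^*$, and $\Phi A^*_{X^*}\Phi^{-1}$ is exactly the extrapolated operator $A^*:X^*\to\DD^*$. Reflexivity of $\DD$ gives $X^*\embed^d\DD^*$.

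The citation buys brevity and covers technicalities you only gesture at. In (ii), the bounds $\|Ae^{-tA}\|\le C/t$ and $\|A^2e^{-tA}\|\le C/t^2$ hold on all of $(0,\infty)$ only after replacing $A$ by $A+\omega$. This is harmless on $(0,T)$ via $y\mapsto e^{-\omega t}y$, but it is needed to run Calder\'on--Zygmund on $\R$. In (b), the singular convolution has to be defined as a bounded extension from a dense class before you identify its adjoint. Your version, in turn, makes clear where each hypothesis enters:
\begin{itemize}
\item reflexivity of $X$, for the dual of the Bochner space and for strong continuity of the adjoint semigroup;
\item $0\notin\sigma(A)$, for $\Phi$;
\item $\DD\embed^d X$, for \eqref{eq:functordual}.
\end{itemize}

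Your caution about the endpoint is not just prudent but necessary. Whenever $A$ is unbounded on the reflexive space $X$, as in Assumption~\ref{assu:spaces}, item (ii) is false for $s=1$. The reason is a theorem of Guerre-Delabri\`ere: on reflexive spaces, maximal $L^1$-regularity forces $A$ to be bounded. Likewise, (iii) with $q=1$ would require $q'=\infty$, which lies outside the definition. The lemma should therefore be read with $q,s\in(1,\infty)$. This is all that is used later, since Assumption~\ref{assu:f} takes $q\in(1,\infty)$.
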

		
	\begin{assumption}[Maximal Parabolic Regularity]\label{assu:maxparreg}
		Throughout the paper, the linear and bounded operator 
		$A: \DD \to X$ is assumed to admit maximal parabolic $L^q$-regularity.
	\end{assumption}
	
	\begin{lemma}
	  Every strong solution of \eqref{eq:stateeqg} is a weak solution. 
	 	
		The reverse statement is only true under additional assumptions on $u$ and $f$, for instance:
		If $u\in L^q(0,T;X)$ and $f$ satisfies the following
		\begin{alignat}{3}
			& \exists\, s\in (1,q], \, \tilde c_1\in \R, \, \tilde c_2 \geq 0 \, \colon 
			& \quad & \|f(v)\|_{X} \leq c_1 + c_2 \, \|v\|_{\DD}^{q/s} \quad \forall \, v\in \DD, \label{eq:fgrowth} \\
			& \forall\, M > 0 \; \exists\, k = k(M) \geq 0 \, \colon 
			& & \|f(v)\|_X \leq k \quad \forall \, v \in Y_r \text{ with } \|v\|_{Y_r} \leq M \label{eq:fbound}
		\end{alignat} 
		with $r = q(1 + \frac{1}{q} - \frac{1}{s})$, then every weak solution is also a strong solution.
	\end{lemma}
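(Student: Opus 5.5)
For the first assertion, I take a strong solution $y\in\W^q(\DD,X)$ with $y(0)=y_0$ and a test function $v\in\W_T^{q'}(X^*,\DD^*)$. I pair \eqref{eq:strong1} with $v(t)$ and integrate over $(0,T)$. The terms $\dual{Ay}{v}_{X,X^*}=\dual{y}{A^*v}_{\DD,\DD^*}$ and $\dual{f(y)}{v}_{Y_q,Y_q^*}$ are immediate; the second is well defined because $f(y(t))\in Y_q\embed X$ and $v\in L^\infty(0,T;Y_q^*)$. The only real work is the integration by parts
\begin{equation*}
\int_0^T\dual{y'}{v}_{X,X^*}\,\d t=-\int_0^T\dual{y}{v'}_{\DD,\DD^*}\,\d t+\dual{y(T)}{\gamma_Tv}-\dual{y(0)}{\gamma_0v}.
\end{equation*}
I would prove this first for smooth functions, e.g.\ $y\in C^1([0,T];\DD)$ and $v\in C^1([0,T];X^*)$, where it is the classical product rule. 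I would then extend it by density of these functions in $\W^q(\DD,X)$ and $\W^{q'}(X^*,\DD^*)$, using the trace embeddings into $C([0,T];Y_q)$ and \eqref{eq:maxparregembeddual}, \eqref{eq:functordual} to pass to the limit in the boundary terms. Since $\gamma_Tv=0$ and $y(0)=y_0$, this yields exactly \eqref{eq:weak}, with $u$ interpreted via \eqref{eq:embedLqWstar}.

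For the converse, let $y\in L^q(0,T;\DD)$ be a weak solution. By \eqref{eq:fgrowth}, $f(y)\in L^s(0,T;X)$. By maximal parabolic $L^s$-regularity (Lemma~\ref{lem:maxparreg}(ii) and (i)), there is a unique $z\in\W^s(\DD,X)$ with $z'+Az=u-f(y)\in L^s(0,T;X)$ and $z(0)=y_0$. Note that $y_0\in Y_q\embed Y_s$ since $s\le q$, so the initial value is admissible. By the first part, applied to this linear equation, $z$ is a weak solution of the linear problem with right-hand side $u-f(y)$. Then $w:=y-z$ satisfies the homogeneous weak linear equation
\begin{equation*}
\int_0^T\dual{w}{-v'+A^*v}\,\d t=0 \quad\forall v\in\W_T^{q'}(X^*,\DD^*).
\end{equation*}
By Lemma~\ref{lem:maxparreg}\ref{it:maxparreg3}, $-\partial_t+A^*$ with zero final trace maps onto $L^{q'}(0,T;\DD^*)$, whose dual contains $L^q(0,T;\DD)$. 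Hence $w=0$, that is, $y=z\in\W^s(\DD,X)$. A slight care is needed here: $z$ lies only in $L^s(0,T;\DD)$, so I would apply the uniqueness argument in the $s$-setting with test functions in $\W_T^{s'}$, or simply note $y\in L^q\subset L^s$.

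The main obstacle is the bootstrap from $s$ to $q$. Since $y\in\W^s(\DD,X)\embed C([0,T];Y_s)$, I would check that the exponent $r=q(1+\frac1q-\frac1s)$ relates to $s$ such that $Y_s\embed Y_r$; I expect either $r\ge s$ or a suitable interpolation estimate to give this. Then $\|y(t)\|_{Y_r}\le M$ uniformly in $t$, and \eqref{eq:fbound} gives $f(y)\in L^\infty(0,T;X)\subset L^q(0,T;X)$. Maximal $L^q$-regularity then produces a strong solution $\tilde y\in\W^q(\DD,X)$ of the linear equation with right-hand side $u-f(y)$. The same uniqueness argument yields $\tilde y=y$, so $y$ is a strong solution. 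Verifying the precise embedding $Y_s\embed Y_r$ for this specific $r$ (possibly through the mixed-derivative embedding of $\W^s$ into Hölder spaces with values in intermediate interpolation spaces) is where I expect the delicate part to lie.
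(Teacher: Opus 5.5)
Your proof of the first assertion is fine. Your duality argument giving $y=z\in\W^s(\DD,X)$ is also sound, and it is a legitimate replacement for the paper's testing with $v=\varphi\,w$. Your final identification $\tilde y=y$ even delivers $y(0)=y_0$ without the separate trace argument the paper needs.

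The proof breaks down at the step you flagged: the embedding $Y_s\embed Y_r$ is false in general. Indeed $r-s=(s-1)(q-s)/s\geq 0$, so $1/r'\geq 1/s'$. For $s<q$ the space $Y_r=(X,\DD)_{1/r',r}$ therefore carries the strictly larger interpolation parameter, hence $Y_r\embed Y_s$, not the other way round. No finer embedding of $\W^s(\DD,X)$ can repair this. Since $\gamma_0\colon\W^s(\DD,X)\to Y_s$ is onto by maximal $L^s$-regularity, an embedding $\W^s(\DD,X)\embed C([0,T];Y_r)$ would force $Y_s\subset Y_r$, which fails in general. So from $y\in\W^s(\DD,X)$ alone you cannot obtain $\sup_t\|y(t)\|_{Y_r}<\infty$, and \eqref{eq:fbound} cannot be invoked.

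The missing idea is that you discarded the information $y\in L^q(0,T;\DD)$. This is strictly better than the $L^s(0,T;\DD)$-component of $\W^s(\DD,X)$. The paper works in the mixed space $W^{1,s}(0,T;X)\cap L^q(0,T;\DD)$; your identity $y=z$ places $y$ there as well. It then uses the trace embedding for different integrability exponents, $W^{1,s}(0,T;X)\cap L^q(0,T;\DD)\embed C([0,T];(X,\DD)_{1/r',r})=C([0,T];Y_r)$, cf.\ \cite[Lemma~1.4.4]{Mei17}. This is exactly where the peculiar exponent comes from. A scaling argument in this mixed space identifies the interpolation parameter $\theta=\frac{1/s'}{1/q+1/s'}$, which equals $1/r'$, and indeed $\frac1r=\frac{\theta}{q}+\frac{1-\theta}{s}$. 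With this embedding in place of $Y_s\embed Y_r$, the rest of your proof goes through: $f(y)\in L^\infty(0,T;X)$, maximal $L^q$-regularity for $\tilde y$, and your uniqueness argument.
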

	
	\begin{proof}
		Let $y\in \W^q(\DD,X)$ be a strong solution associated with $u\in L^q(0,T;X)$.  Then, it follows from 
		\eqref{eq:strong1} that $f(y) \in L^q(0,T;X)$.
		Hence, we are allowed to multiply \eqref{eq:strong1} with an arbitrary test function 
		$v\in \W_T^{q'}(X^*,\DD^*) \embed L^{q'}(0,T;X^*)$ and to integrate the resulting equation.
		By applying the formula of integration by parts from \cite[Prop.~5.1]{Ama03}, we obtain 
		\begin{equation*}
			\begin{aligned}
				0 &= 
				\begin{aligned}[t]
					\int_0^T \big( \dual{y'(t)}{v(t)}_{X, X^*} & + \dual{A y(t)}{v(t)}_{X, X^*} \\
					& + \dual{f(y(t))}{v(t)}_{X, X^*} - \dual{u(t)}{v(t)}_{X, X^*} \big)\,\d t         
				\end{aligned} \\
				&= 
				\begin{aligned}[t]
					\int_0^T \big( -\dual{y(t)}{v'(t)}_{\DD, \DD^*} & + \dual{y(t)}{A^*v(t)}_{\DD, \DD^*} 
					+ \dual{f(y(t))}{v(t)}_{Y_q, Y_q^*} \big)\,\d t \\
					& - \dual{\bar u}{v}
					- \dual{y_0}{\gamma_0 v}_{Y_q, Y_q^*}
				\end{aligned} 
			\end{aligned}
		\end{equation*}
		such that $y$ is indeed a weak solution, too.
		
		To show the reverse assertion under the additional assumptions, first observe that the growth condition in 
		\eqref{eq:fgrowth} implies that $f$ maps $L^q(0,T;\DD)$ to $L^s(0,T;X)$, see \cite[Theorem~1]{GKT92}.
		Now, if $y \in L^q(0,T;\DD)$ is a weak solution of \eqref{eq:stateeqg} associated with 
		$u\in L^q(0,T;X)$, then by testing 
		\eqref{eq:weak} with $v = \varphi \, w$ with arbitrary $\varphi\in  C^\infty_c(0,T)$ and arbitrary $w\in X^*$, we find
		that 
		\begin{equation}\label{eq:distrderiv}
			- \int_0^T \varphi'(t) \, y(t)\,\d t
			= \int_0^T \big(u(t) - Ay(t) - f(y(t))\big)\varphi(t)\,\d t \quad \text{in } X.
		\end{equation}
		Hence, the distributional time derivative of $y$ is a regular distribution in $L^s(0,T;X)$, i.e., 
		\begin{equation*}
			y \in W^{1,s}(0,T;X) \cap L^q(0,T;\DD) \embed C([0,T]; (X,\DD)_{1/r', r}) = C([0,T];Y_r)
		\end{equation*}
		with $r = q(1 + \frac{1}{q} - \frac{1}{s})$, cf.\ \cite[Lemma~1.4.4]{Mei17}. 
		Thus, \eqref{eq:fbound} implies $f(y) \in L^\infty(0,T;X)$ and therefore, by returning again to \eqref{eq:distrderiv}, 
		we find that 
		\begin{equation}\label{eq:weakstrong}
			y' = u - Ay - f(y) \in L^q(0,T;X).
		\end{equation}
		This entails that \eqref{eq:strong1} is fulfilled and that $y\in \W^q(\DD,X)$, i.e., $y$ has the regularity of a strong solution. 
		It remains to verify the initial condition. To this end, we again apply the formula of integration by parts from 
		\cite[Prop.~5.1]{Ama03} to \eqref{eq:weak} and use \eqref{eq:weakstrong} in order to obtain
		\begin{equation*}
			\dual{\gamma_0 y}{\gamma_0 v}_{Y_q, Y_q^*}
			= \dual{y_0}{\gamma_0 v}_{Y_q, Y_q^*} \quad \forall\, v\in \W_T^{q'}(X^*,\DD^*) .
		\end{equation*}
		Since $A$ and thus also $A^*$ satisfies maximal parabolic regularity by assumption, the trace is surjective, 
		i.e., $\gamma_0(\W_T^{q'}(X^*, \DD^*)) = Y_q^*$ and therefore, $\gamma_0 y = y_0$ in $Y_q$.
	\end{proof}
	
	Next we turn to the form functions $\psi_i$ in \eqref{eq:stateeq}. 
	
	\begin{assumption}[Form functions]\label{assu:formfunc}
		For all $i = 1, \ldots, n-1$, the form function $\psi_i$ satisfies 
		\begin{equation*}
			\psi_i \in Y_{\vartheta, \infty} = (X, \DD)_{\vartheta, \infty}
			\quad \text{with} \quad \vartheta \in \big(\tfrac{1}{q'},1\big).
		\end{equation*}        
	\end{assumption}
	
	
	Given a tuple of switching times $0 \leq \tau_1 \leq \tau_2 \leq \ldots \leq \tau_n \leq T$, 
	the switching-time-to-control mapping is given by 
	\begin{equation}\label{eq:Bnaive}
		B : \R^n \ni \tau \mapsto 
		B(\tau) := \sum_{i=1}^{n-1}\chi_{[\tau_i, \tau_{i+1})} \,\psi_i \in L^\infty(0,T;(X, \DD)_{\vartheta, \infty}).
	\end{equation}
	Equipped with this range space, however, the mapping $B$ turns out to be non-differentiable, 
	see Section~\ref{sec:diffB} below. 
	For this reason, we consider $B$ as a mapping in a weaker space. To be more precise, 
	we aim to consider $B$ as a mapping in the dual of $\W^{q'}_T(X^*, \DD^*)$, which is just the test space 
	in the definition of a weak solution, see \eqref{eq:weak}.
	According to \cite[Theorem~3]{Ama01}, for all $\alpha \in [0,  \vartheta - 1/q')$, this space satisfies 
	\begin{equation}\label{eq:defZ}
		\W^{q'}_T(X^*, \DD^*) \embed C^{0,\alpha}(0,T;Z)
		\quad \text{with} \quad 
		Z := (X^*,\DD^*)_{1 - \vartheta,1}.
	\end{equation}
	Note that $\vartheta > 1/q'$ by Assumption~\ref{assu:formfunc} so that the interval for $\alpha$ 
	is nonempty. Now the duality of real interpolation functors implies
	\begin{equation*}
		Z^* = [(X^*,\DD^*)_{1 - \vartheta,1}]^* = (X, \DD)_{1 - \vartheta, \infty} = (\DD, X)_{\vartheta, \infty} = Y_{\vartheta, \infty}
	\end{equation*}
	and therefore, we can consider $B$ with range in  $[C^{0,\alpha}(0,T;Z)]^*$ and $[\W^{q'}_T(X^*, \DD^*) ]^*$, 
	respectively, by setting
	\begin{equation}\label{eq:Bweak}
		\begin{aligned}
			& B: \R^n \to [C^{0,\alpha}(0,T;Z)]^* \embed [\W^{q'}_T(X^*, \DD^*) ]^*, \\
			& \dual{B(\tau)}{v}_{[C^{0,\alpha}(0,T;Z)]^*, C^{0,\alpha}(0,T;Z)}
			:= \int_0^T \sum_{i=1}^{n-1}\chi_{[\tau_i, \tau_{i+1})}(t) \, \dual{\psi_i}{v(t)}_{Z^*, Z} \,\d t .
		\end{aligned}
	\end{equation}
	
	Now we have everything at hand to rigorously define the optimization problem under consideration:
	\begin{equation}\tag{OCP}\label{eq:ocp}
		\left\{\quad
		\begin{aligned}
			\min \quad & J(y,\tau) \\
			\text{s.t.} \quad & \text{$y\in L^q(0,T;\DD)$ is a weak solution associated with $u = B(\tau)$},\\
			& \tau \in \PP, 
		\end{aligned}
		\right.
	\end{equation}    
	where $J: L^q(0,T;\DD) \times \R^n \to \R$ is a given objective, for which we require the following
	
	\begin{assumption}[Objective]\label{assu:obj}
		The objective functional $J$ is assumed to be Fr\'echet-differentiable from 
		$L^q(0,T;\DD) \times \R^n$ to $\R$.
	\end{assumption}
	
	We finally emphasize that all of the above assumptions are standing assumptions that are 
	tacitly assumed to hold throughout the entire paper. 
	
	\section{Sensitivity Analysis}\label{sec:diffbarkeit}
	
	\subsection{Differentiability of the Switching-Point-to-Control Operator}\label{sec:diffB}
	
	In this subsection, we prove the Fréchet-differentiability of the switching-point-to-control operator $B$.
	As indicated above, as a mapping with range in a Lebesgue-Bochner-space, $B$ is not differentiable, 
	as the following example demonstrates.
	
	\begin{example}\label{ex:Bnotsmooth}
		Let $X = \DD = \R$, $n=2$, and $\psi_1 = 1$. Suppose
		\begin{align*}
			B: \R^{2} \to  L^p(0,T), \; \tau \mapsto \chi_{[\tau_{1}, \tau_{2})}, 
		\end{align*}
		was (G\^ateaux)-differentiable for some $\tau = (\tau_1, \tau_2) \in \R^2$ and $ 1 \leq p < \infty$. 
		Then, for every $h \in \R^2$, we would have 
		\begin{align*}
			\frac{B(\tau + \rho h) - B(\tau)}{\rho}  \xrightarrow{L^p} B'(\tau)h 
		\end{align*}
		for $\rho \searrow 0$. It would follow that there is a null sequence $(\rho_n)_{n \in \N} \subset (0, \infty)$ such that 
		\begin{align*}
			\frac{B(\tau + \rho_n h)(t) - B(\tau)(t)}{\rho_n} \xrightarrow{n \to \infty} B'(\tau)h(t)
			\quad \text{f.a.a.\ } t \in (0,T).
		\end{align*}
		However, for the pointwise limit, we obtain
		\begin{align*}
			\frac{B(\tau + \rho_n h)(t) - B(\tau)(t)}{\rho_n} 
			= \frac{1}{\rho_n} \left(\chi_{[\tau_1 + \rho_n h_1, \tau_2 + \rho_n h_2)}(t)  
			- \chi_{[\tau_1, \tau_2)}(t) \right) \xrightarrow{n \to \infty} 0
		\end{align*}
		for almost all $t \in (0,T)$, yielding $B'(\tau)h = 0$ and therefore $B'(\tau) = 0$. 
		Hence, $B$ must be constant on every domain where it is G\^ateaux-differentiable.
		This is obviously not the case, if $\tau_1 < \tau_2$, $\tau_1 < T$, and/or $\tau_2 > 0$.
	\end{example}
	
	We therefore consider the operator defined in \eqref{eq:Bweak} instead, which fits to the notion of a weak solution 
	as it maps into $[\W^{q'}_T(X^*, \DD^*) ]^*$. However, the mapping is still not differentiable.
	Consider, for instance, the same setting as in Example~\ref{ex:Bnotsmooth}, i.e.,
	$X = \DD = \R$ and $n =2$, and set $\tau_1 = \tau_2$. Then, as a consequence of 
	$\chi_{[\tau_1, \tau_2 - \varepsilon)}(t) = 0$ for all $\varepsilon > 0$, 
	  the left-sided derivative vanishes in $\tau =(\tau_1, \tau_2)$, while we obtain for the right-sided derivative 
	\begin{equation*}
	    \frac{1}{\varepsilon} \int_0^T \big( \chi_{[\tau_1, \tau_2 + \varepsilon)}(t) - \chi_{[\tau_1, \tau_2)}(t)\big) 
	    v(t) \,\d t  \xrightarrow{\varepsilon \searrow 0} v(\tau_1) = \delta_{\tau_1}(v)
	\end{equation*}
	for all $v\in C([0,T])$.
	To resolve this issue,  we modify the definition of the characteristic function by setting
	\begin{equation}\label{eq:defbarchi}
		\bar \chi_{[a,b)} (t) :=
		\begin{cases}
			1, & \text{for } a \leq t  < b\\
			-1, & \text{for } b < t \leq a, \\ 
			0, & \text{else,}
		\end{cases}
	\end{equation}
	for $a,b, t \in \R$. By means of a distinction of cases, one easily verifies that, for every $a,b, h_1, h_2 \in \R$, 
	it holds that 
	\begin{equation}\label{eq:barchieq}
		\bar\chi_{[a + h_1, b+ h_2)}(t) 
		= 	\bar\chi_{[b , b + h_2)} - 	\bar\chi_{[a,  a + h_1)} 
		+ \bar\chi_{[a,  b)} \quad \text{f.a.a.\ } t\in \R. 
	\end{equation}
	
	Given $v\in C^{0,\alpha}(0,T;Z)$, we define $V$ as its extension on $\R$ by constant continuation, i.e., 
	\begin{equation}\label{eq:extensiononR}
		V: \R \to Z,\quad 
		V(t) :=  v\big(\min\{\max\{t,0\}, T\}\big).
	\end{equation}
	Note that $V$ inherits its supremum-norm as well as its H\"older constant from $v$ such that 
	$\|V\|_{C^{0,\alpha}(\R;Z)} = \|v\|_{C^{0,\alpha}(0,T;Z)}$. 
	Given this extension and the modified characteristic function $\bar\chi$, we redefine the switching-time-to-control mapping by
	\begin{equation}\label{eq:Bweak2}
		\begin{aligned}
			& \bar B: \R^n \to [C^{0,\alpha}(0,T;Z)]^*, \\
			& \dual{\bar B(\tau)}{v}_{[C^{0,\alpha}(0,T;Z)]^*, C^{0,\alpha}(0,T;Z)}
			:= \int_{\R} \sum_{i=1}^{n-1}\bar\chi_{[\tau_i, \tau_{i+1})}(t) \, \dual{\psi_i}{V(t)}_{Z^*, Z} \,\d t .
		\end{aligned}
	\end{equation}	
	By the very definition of $\bar\chi_{[\tau_i, \tau_{i+1})}$, it directly follows that $\bar B$ equals $B$ on the 
	set of feasible switching points. For later reference, we formulate this finding in the following
	
	\begin{lemma}\label{lem:BgleichbarB}
		For all $\tau \in \PP$ and all $i = 1, \ldots,n$, 
		we have $\chi_{[\tau_i, \tau_{i+1})} = \bar\chi_{[\tau_i, \tau_{i+1})}$ 
		and thus $B|_{\PP} = \bar B|_{\PP}$.
	\end{lemma}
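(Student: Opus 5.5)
The argument is a pointwise comparison of the two characteristic functions, followed by an integration against the extension $V$. Fix $\tau\in\PP$ and an index $i\in\{1,\dots,n-1\}$. Since $\tau\in\PP$, we have $\tau_i\le\tau_{i+1}$. I will check $\chi_{[\tau_i,\tau_{i+1})}(t)=\bar\chi_{[\tau_i,\tau_{i+1})}(t)$ for every $t\in\R$ directly from \eqref{eq:defbarchi}, treating each branch in turn.

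\begin{itemize}
\item[(a)] If $\tau_i\le t<\tau_{i+1}$, both functions equal $1$.
\item[(b)] The branch of $\bar\chi$ with value $-1$ requires $\tau_{i+1}<t\le\tau_i$, which would force $\tau_{i+1}<\tau_i$. This contradicts $\tau\in\PP$, so that branch is empty.
\item[(c)] For all remaining $t$, both functions vanish.
\end{itemize}

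For $\tau_i=\tau_{i+1}$ both functions are identically zero, which is consistent with the above. The statement formally lists indices up to $n$, but only $i\le n-1$ enters $B$ and $\bar B$, so these are the only indices that need checking.

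Next I compare the two operators. For $v\in C^{0,\alpha}(0,T;Z)$, the functional $\dual{\bar B(\tau)}{v}$ is an integral over $\R$ of $\sum_i\bar\chi_{[\tau_i,\tau_{i+1})}(t)\,\dual{\psi_i}{V(t)}$. By the pointwise identity just shown, this integrand equals $\sum_i\chi_{[\tau_i,\tau_{i+1})}(t)\,\dual{\psi_i}{V(t)}$. Since $0\le\tau_1\le\dots\le\tau_n\le T$, each $\chi_{[\tau_i,\tau_{i+1})}$ is supported in $[0,T]$, so the integral over $\R$ reduces to one over $(0,T)$. On $[0,T]$ the extension satisfies $V=v$ by \eqref{eq:extensiononR}. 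Hence $\dual{\bar B(\tau)}{v}$ coincides with the expression in \eqref{eq:Bweak}.

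Since this holds for every $v$, we obtain $\bar B(\tau)=B(\tau)$ in $[C^{0,\alpha}(0,T;Z)]^*$. There is no genuine obstacle here; the only points needing care are the support argument, which uses $\tau_1\ge0$ and $\tau_n\le T$, and the fact that $V$ agrees with $v$ on $[0,T]$.
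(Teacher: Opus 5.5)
Your proof is correct and follows the paper, which simply states that the identity is immediate from the definition of $\bar\chi$. The key point is exactly yours: when $\tau_i\le\tau_{i+1}$, the $-1$ branch of $\bar\chi_{[\tau_i,\tau_{i+1})}$ is empty. You additionally spell out details the paper leaves tacit: the supports lie in $[0,T]$ because $0\le\tau_1$ and $\tau_n\le T$, so the extension $V$ may be replaced by $v$, and only indices $i\le n-1$ are meaningful.
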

	
	\begin{theorem}\label{thm:barBdiff}
		The switching-point-to-control operator, as defined in \eqref{eq:Bweak2}, is continously Fréchet-differentiable 
		from $\R^n$ to $[C^{0,\alpha}(0,T;Z)]^*$. Its directional derivative at $\tau\in \R^n$ in direction $h\in \R^n$
		is given by
		\begin{equation}\label{eq:barBabl}
			\dual{\bar B'(\tau)h}{v}_{[C^{0,\alpha}(0,T;Z)]^*, C^{0,\alpha}(0,T;Z)}  = 
			\sum_{i=1}^{n-1}  \langle \psi_i, h_{i+1} \, V(\tau_{i+1})  - h_i \, V(\tau_{i}) \rangle_{Z^*, Z} . 
		\end{equation}
	\end{theorem}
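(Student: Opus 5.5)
The plan is to use the splitting identity \eqref{eq:barchieq} to reduce everything to one-sided integrals of $V$ over short intervals, and then to control the remainder with the H\"older continuity of $V$. Fix $\tau, h \in \R^n$ and $v \in C^{0,\alpha}(0,T;Z)$ with extension $V$ from \eqref{eq:extensiononR}. Applying \eqref{eq:barchieq} with $a = \tau_i$, $b = \tau_{i+1}$ and perturbations $h_i, h_{i+1}$ gives
\begin{equation*}
\dual{\bar B(\tau+h) - \bar B(\tau)}{v} = \sum_{i=1}^{n-1} \Big( \int_\R \bar\chi_{[\tau_{i+1},\tau_{i+1}+h_{i+1})}(t)\, \dual{\psi_i}{V(t)}_{Z^*,Z}\,\d t - \int_\R \bar\chi_{[\tau_{i},\tau_{i}+h_{i})}(t)\, \dual{\psi_i}{V(t)}_{Z^*,Z}\,\d t \Big).
\end{equation*}
The key elementary fact follows from the sign convention in \eqref{eq:defbarchi}: for any $s, \eta \in \R$ we have $\int_\R \bar\chi_{[s,s+\eta)}(t)\,\d t = \eta$. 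Hence
\begin{equation*}
\int_\R \bar\chi_{[s,s+\eta)}(t)\, \dual{\psi}{V(t)}\,\d t - \eta\, \dual{\psi}{V(s)} = \int_\R \bar\chi_{[s,s+\eta)}(t)\, \dual{\psi}{V(t)-V(s)}\,\d t .
\end{equation*}
The integrand is supported in an interval of length $|\eta|$ around $s$. There $\|V(t)-V(s)\|_Z \le \|v\|_{C^{0,\alpha}(0,T;Z)}\,|\eta|^\alpha$, since $V$ inherits the H\"older constant of $v$. So the right-hand side is bounded by $\|\psi\|_{Z^*}\,\|v\|_{C^{0,\alpha}}\,|\eta|^{1+\alpha}$.

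Summing over $i$ with $\eta = h_i$ and $\eta = h_{i+1}$, and defining $\bar B'(\tau)h$ by \eqref{eq:barBabl}, I expect
\begin{equation*}
\|\bar B(\tau+h) - \bar B(\tau) - \bar B'(\tau)h\|_{[C^{0,\alpha}(0,T;Z)]^*} \le 2\max_i\|\psi_i\|_{Z^*}\, |h|_\infty^{1+\alpha}\,(n-1) = o(|h|).
\end{equation*}
Here $\|\psi_i\|_{Z^*}$ is finite because $\psi_i \in Y_{\vartheta,\infty} = Z^*$ by Assumption~\ref{assu:formfunc}. Before this I will check that $\bar B'(\tau)$ is a bounded linear map $\R^n \to [C^{0,\alpha}(0,T;Z)]^*$. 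Linearity in $h$ is clear. Boundedness follows from $|\dual{\psi_i}{V(\tau_j)}| \le \|\psi_i\|_{Z^*}\|v\|_{C^{0,\alpha}}$, since point evaluation of $V$ is bounded by the sup norm. This proves Fr\'echet differentiability with the asserted derivative.

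For continuity of $\tau \mapsto \bar B'(\tau)$, I compare the derivatives at $\tau$ and $\tilde\tau$:
\begin{equation*}
|\dual{(\bar B'(\tau) - \bar B'(\tilde\tau))h}{v}| \le \sum_i \|\psi_i\|_{Z^*}\big(|h_{i+1}|\,\|V(\tau_{i+1})-V(\tilde\tau_{i+1})\|_Z + |h_i|\,\|V(\tau_i)-V(\tilde\tau_i)\|_Z\big).
\end{equation*}
This is at most $C\,|h|\,\|v\|_{C^{0,\alpha}}\,|\tau-\tilde\tau|^\alpha$, so the derivative is even $\alpha$-H\"older continuous in operator norm. This is exactly where H\"older rather than mere continuity in time is needed: with only $v \in C([0,T];Z)$, the modulus of continuity would not be uniform over the unit ball, and operator-norm continuity would fail.

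The main obstacle is the first step. One has to verify the identity \eqref{eq:barchieq} carefully in all sign configurations and confirm that $\int_\R \bar\chi_{[s,s+\eta)} = \eta$ with the correct sign for negative $\eta$. This is precisely what the modified $\bar\chi$ buys over $\chi$, and it is the reason the left- and right-sided derivatives now agree. One should also note that because $V$ is extended constantly outside $[0,T]$, no boundary cases at $0$ or $T$ arise.
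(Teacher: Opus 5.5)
Your proposal is correct and follows the paper's proof essentially step for step: splitting via \eqref{eq:barchieq}, bounding each one-sided remainder through the H\"older continuity of the extension $V$, and proving continuity of $\tau\mapsto\bar B'(\tau)$ from $\|V(\tau_j)-V(\tilde\tau_j)\|_Z\le\|v\|_{C^{0,\alpha}(0,T;Z)}\,|\tau_j-\tilde\tau_j|^\alpha$ (your bound $|h_j|^{1+\alpha}$ versus the paper's $|h_j|^{1+\alpha}/(1+\alpha)$ makes no difference). As in the paper, the argument implicitly requires choosing $\alpha>0$ in the admissible range $[0,\vartheta-1/q')$; note also that H\"older regularity is already needed for the Fr\'echet remainder estimate, not only for the continuity of the derivative as your closing remark suggests.
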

	
	\begin{proof}
		Throughout the proof, let us abbreviate $\ZZ := C^{0,\alpha}(0,T;Z)$ to ease notation.
		Using \eqref{eq:barchieq}, we obtain for arbitrary $\tau, h \in \R^n$ that 
		\begin{align*}
			& \| \bar B(\tau + h) - \bar B(\tau) - \bar B'(\tau)h \|_{\ZZ^*} \\ 
			& \quad = \sup_{\|v\|_{\ZZ} \leq 1} \,
			\begin{aligned}[t]
				\Big( & \sum_{i=1}^{n-1} \int_\R \big(\bar \chi_{[\tau_{i+1}, \tau_{i+1} + h_{i+1})} (t) - 
				\bar \chi_{[\tau_i, \tau_{i}+ h_i)} (t) \big) \dual{\psi_i}{V(t)}_{Z^*, Z} \,\d t \\ 
				& - \sum_{i=1}^{n-1} \langle \psi_i, h_{i+1} \, V(\tau_{i+1}) - h_i \, V(\tau_i) \rangle_{Z^*, Z} \Big)
			\end{aligned} \\
			& \quad \leq 
			\begin{aligned}[t]
				&\sum_{i=1}^{n-1} \sup_{\|v\|_\ZZ \leq 1} 
				\int_\R \bar \chi_{[\tau_{i+1}, \tau_{i+1} + h_{i+1})} (t) \langle \psi_i, V(t) \rangle_{Z^*, Z} \,\d t 
				- h_{i+1} \,\langle \psi_i, V(\tau_{i+1}) \rangle_{Z^*, Z} \\ 
				& + \sum_{i=1}^{n-1} \sup_{\|v\|_\ZZ \leq 1} 
				\int_\R  \bar \chi_{[\tau_{i}, \tau_{i} + h_{i})} (t) \langle \psi_i, V(t) \rangle_{Z^*, Z} \,\d t 
				- h_{i}\, \langle \psi_i, V(\tau_{i}) \rangle_{Z^*, Z} .
			\end{aligned}        		
		\end{align*}
		For every $i = 1, \ldots, n-1$ and every $k = 0,1$, we now have 
		\begin{align*}
			&  \sup_{\|v\|_\ZZ \leq 1} \int_\R  \bar \chi_{[\tau_{i+k}, \tau_{i+k} + h_{i+k})} (t) \langle \psi_i, V(t) \rangle_{Z^*, Z} \,\d t 
			- h_{i+k} \,\langle \psi_i, V(\tau_{i+k}) \rangle_{Z^*, Z} \\
			& \quad =  \sup_{\|v\|_\ZZ \leq 1} \int_{\tau_{i+k}}^{\tau_{i+k}+ h_{i+k}} 
			\big( \langle \psi_i, V(t) \rangle_{Z^*, Z}  - \langle \psi_i, V(\tau_{i+k}) \rangle_{Z^*, Z} \big)\,\d t \\
			& \quad \leq  \sup_{\|v\|_\ZZ \leq 1} \|\psi_i\|_{Z^*} \, \|V\|_{C^{0,\alpha}(\R;Z)} \,
			\int_{\tau_{i+k}}^{\tau_{i+k}+ |h_{i+k}|} (t - \tau_{i+k})^\alpha\,\d t \\
			& \quad = \sup_{\|v\|_\ZZ \leq 1} \|\psi_i\|_{Z^*} \,\|v\|_\ZZ\, \frac{1}{\alpha+1} \, |h_{i+k}|^{\alpha+1}
			= \frac{1}{\alpha+1} \, \|\psi_i\|_{Z^*}\, |h_{i+k}|^{\alpha+1}
		\end{align*}
		Thus, we arrive at
		\begin{equation*}
			\begin{aligned}
				\frac{\| \bar B(\tau + h) - \bar B(\tau) - \bar B'(\tau)h \|_{\ZZ^*}}{\|h\|_{\R^n}} 
				& \leq\frac{1}{\|h\|_{\R^n}} \,
				\frac{1}{\alpha+1} \sum_{i=1}^{n-1} \|\psi_i\|_{Z^*} \big( |h_{i+1}|^{\alpha+1} + |h_{i}|^{\alpha+1}\big) \\
				&  \leq \frac{2(n-1)}{\alpha+1}  \,\|h\|_{\R^n}^\alpha \max_{1\leq i\leq n-1} \|\psi_i\|_{Z^*} \to 0 \quad \text{as } h \to 0,
			\end{aligned}
		\end{equation*}
		which is the claimed Fr\'echet-differentiability.
		
		To show that the derivative is continuous, it suffices to show the continuity of 
		\begin{equation*}
			\delta^{(i)}: \R^n \to L(\R^n, \ZZ^*), \quad 
			\langle \delta^{(i)}(\tau)h, v \rangle_{\ZZ^*, \ZZ} := h_i \langle \psi, v(\tau_i))_{Z^*, Z}
		\end{equation*}        
		with arbitrary, but fixed $i \in \{1, \ldots, n\}$ and $\psi \in Z^*$. 
		For that, let $\{\tau^{(n)}\}_{n \in \N}$ be a sequence converging to some $\tau \in \R^n$. We then obtain 
		\begin{align*}
			& \| \delta^{(i)}(\tau^{(n)}) - \delta^{(i)}(\tau) \|_{L(\R^n, \ZZ^*)} \\
			&\qquad = \sup_{\| h \|_{\R^n} \leq 1} \sup_{\| v \|_{\ZZ} \leq 1} 
			\langle (\delta^{(i)}(\tau^{(n}) - \delta^{(i)}(\tau)) h, v \rangle_{\ZZ^*, \ZZ} \\
			&\qquad = \sup_{\| v \|_{\ZZ} \leq 1} \langle \psi, v(\tau^{(n)}_i) - v(\tau_i)\rangle_{Z^*, Z}  
			\leq \|\psi \|_{Z^*} \| \tau^{(n)}_i - \tau_i \|_{\R^n}^\alpha \to 0
		\end{align*}
		for $n \to \infty$. 
	\end{proof}

	\subsection{Differentiability of the Parabolic Solution Map in Weak Form}
	
	\begin{lemma}\label{lem:adjointeq}
		Let $\bar y \in L^q(0,T;\DD)$ be given.
		For every $h\in L^{q'}(0,T; \DD^*)$, the dual linearized equation 
		\begin{equation}\label{eq:adjeq0}
			- \varphi' + A^* \varphi + f'(\bar y)^* \varphi = h, \quad \varphi(T) = 0
		\end{equation}
		admits a unique strong solution in $\varphi\in \W_T^{q'}(X^*,\DD^*)$.
	\end{lemma}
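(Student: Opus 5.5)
The plan is to treat \eqref{eq:adjeq0} as a perturbation of the backward problem $-\varphi' + A^*\varphi = g$, $\varphi(T)=0$. That problem is uniquely solvable in $\W_T^{q'}(X^*,\DD^*)$ by Lemma~\ref{lem:maxparreg}\ref{it:maxparreg3}. The perturbation $f'(\bar y)^*$ will be handled by a Banach fixed point argument on short time intervals, marching backward from $t=T$.

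\textbf{Step 1: the perturbation is admissible.} By Assumption~\ref{assu:f}\ref{it:f3}, $f'(\bar y(t))^* \in L(Y_q^*, \DD^*)$ with
\begin{equation*}
\|f'(\bar y(t))^*\|_{L(Y_q^*,\DD^*)} \le a(t) := d_1 + d_2\|\bar y(t)\|_{\DD}^{q/r}.
\end{equation*}
Since $\bar y\in L^q(0,T;\DD)$, we have $a\in L^r(0,T)$, and $r>q'$ gives $a \in L^{q'}(0,T)$. Measurability of $t\mapsto f'(\bar y(t))^*\varphi(t)$ follows from the continuity of $f'$ and the strong measurability of $\bar y$. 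By \eqref{eq:maxparregembeddual} and \eqref{eq:functordual}, every $\varphi\in\W^{q'}(X^*,\DD^*)$ lies in $C([0,T];Y_q^*)$. Hence
\begin{equation*}
\|f'(\bar y)^*\varphi\|_{L^{q'}(I;\DD^*)} \le \|a\|_{L^{q'}(I)}\,\|\varphi\|_{C(\bar I;Y_q^*)}
\end{equation*}
for every subinterval $I$, so \eqref{eq:adjeq0} makes sense as an equation in $L^{q'}(0,T;\DD^*)$.

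\textbf{Step 2: local contraction.} On an interval $I=(s,b)\subset(0,T)$ with prescribed terminal value $\varphi_b\in Y_q^*$, define $\Phi(\varphi):=\psi$ as the solution of
\begin{equation*}
-\psi'+A^*\psi = h - f'(\bar y)^*\varphi \ \text{ on } I, \qquad \psi(b)=\varphi_b .
\end{equation*}
This is well defined by Lemma~\ref{lem:maxparreg}\ref{it:maxparreg3}, applied on $I$. For two candidates $\varphi_1,\varphi_2$, the difference $\Phi(\varphi_1)-\Phi(\varphi_2)$ has zero terminal value. Therefore
\begin{equation*}
\|\Phi(\varphi_1)-\Phi(\varphi_2)\|_{C(\bar I;Y_q^*)} \le C\,\|a\|_{L^{q'}(I)}\,\|\varphi_1-\varphi_2\|_{C(\bar I;Y_q^*)}.
\end{equation*}
Here $C$ combines the maximal regularity constant and the trace embedding constant.

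The main obstacle is ensuring that $C$ does not depend on the length of $I$. I would first try extending the right-hand side by zero to $(0,b)$, or equivalently to $(s-T+(b-s), b)$, and shifting time so that everything is posed on a fixed interval of length $T$. The solution with zero terminal data on $I$ is then the restriction of the solution on the long interval, so both constants are those of the fixed interval $(0,T)$.

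\textbf{Step 3: backward marching.} By absolute continuity of $t\mapsto\int_0^t a^{q'}$, choose a partition $T=t_0>t_1>\dots>t_N=0$ with $C\|a\|_{L^{q'}(t_{k},t_{k-1})}\le 1/2$ for each $k$. Then $\Phi$ is a contraction on $\{\varphi\in\W^{q'}(t_k,t_{k-1};X^*,\DD^*)\}$ with the $C(Y_q^*)$-norm, and it maps this set into itself. A fixed point exists and is unique. Start on the first interval with $\varphi_b=0$, and on each subsequent interval use the trace at $t_{k-1}$ of the previously constructed piece as terminal value.

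The glued function is continuous with values in $Y_q^*$. Its pieces lie in $W^{1,q'}(\,\cdot\,;\DD^*)\cap L^{q'}(\,\cdot\,;X^*)$, and continuity at the nodes ensures that the distributional derivative has no jumps. Hence $\varphi\in\W_T^{q'}(X^*,\DD^*)$, and it solves \eqref{eq:adjeq0} a.e. Uniqueness follows because any solution restricted to each interval is the unique fixed point there, which is shown inductively from $t=T$ backwards.
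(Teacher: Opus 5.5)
Your proposal is correct and follows essentially the paper's route: a Banach fixed-point argument for $\varphi \mapsto (-\partial_t + A^*,\gamma_b)^{-1}\bigl(h - f'(\bar y)^*\varphi, \varphi_b\bigr)$ on short time intervals, a length-independent bound for the zero-terminal-data solution operator (the paper cites Lemma~\ref{lem:normGt}, you obtain it by extension by zero and a time shift; you choose an adaptive partition via absolute continuity, the paper uses H\"older with the factor $t^{(r-q')/(rq')}$), and concatenation. One point needs fixing: $\W^{q'}$ equipped with the $C(\bar I;Y_q^*)$-norm is not complete, so run the contraction on $C(\bar I;Y_q^*)$ itself, as the paper does and as your Step~1 estimate already allows; the fixed point then lies in $\W^{q'}$ automatically by the mapping property of the linear solution operator.
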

	
	\begin{proof}
	    The arguments are based on a standard contraction argument, but, for convenience of the reader, we sketch the proof here. 
	    Let $t\in (0,T)$ be arbitrary.
		By Lemma~\ref{lem:maxparreg}\ref{it:maxparreg3},  
		for every $g\in L^{q'}(0,t;\DD^*)$ and every $\psi_t \in Y_q^* = (\DD^*, X^*)_{1/q, q'}$, 
		there exists a unique strong solution $\psi \in \W^{q'}(0,t;X^*,\DD^*)$ of 
		\begin{equation*}
			- \psi' + A^* \psi = g, \quad \psi(t) = \psi_t
		\end{equation*}
		and the associated solution operator 
		\begin{equation}\label{eq:defG}
			K_t := (-\partial_t + A^*, \gamma_t)^{-1} :  L^{q'}(0,t;\DD^*) \times Y_q^* \to \W^{q'}(0,t;X^*,\DD^*)
		\end{equation}
		is linear and continuous. Thanks to the embedding 
		$\W^{q'}(0,t;X^*,\DD^*) \embed C([0,t];Y_q^*)$ already used above, we can consider $K_t$ 
		as an operator with values in $C([0,t]; Y_q^*)$, which we denote by the same symbol for simplicity.
		The associated operator norm is abbreviated by $\|K_t\|$.
		From Assumption~\ref{assu:f}\ref{it:f3}, it follows that $f$ is Fr\'echet-differentiable from 
		$L^q(0,T;\DD)$ to $L^1(0,T;Y_q)$, see also Lemma~\ref{lem:fFrechet}, and the growth condition in    
		Assumption~\ref{assu:f}\ref{it:f3}, implies that 
		\begin{equation}\label{eq:fprimestargrowth}
			\|f'(\bar y)^*\|_{L(Y_q^*, \DD^*)} \leq d_1 + d_2 \, \|\bar y\|_\DD^{q/r} 
			\quad \text{a.e.\ in } (0,T)
		\end{equation}
		with $r > q'$.  As $\bar y \in L^q(0,T;\DD)$, it thus follows that $f'(\bar y)^* \in L^r(0,T;L(Y_q^*,\DD^*))$.
		Consequently, the fixed point mapping associated with \eqref{eq:adjeq0} given by
		\begin{equation*}
			\Phi_t: C([0,t]; Y_q^*) \ni \varphi \mapsto K_t(h - f'(\bar y)^*\varphi, \psi_t) \in C([0,t]; Y_q^*)
		\end{equation*}
		is well defined. To show that it is contractive, provided that $t$ is small enough, 
		let $\varphi_i \in C([0,t]; Y_q^*)$, $i =1 ,2$, be arbitrary. Then, H\"older's inequality yields
		\begin{equation}\label{eq:Gest}
			\begin{aligned}
				& \|\Phi_t(\varphi_1) - \Phi_t(\varphi_2)\|_{C([0,t]; Y_q^*)} \\
				& \qquad \leq \|K_t\| \, 
				\| f'(\bar y)^*\varphi_1 -  f'(\bar y)^*\varphi_2\|_{L^{q'}(0,t;\DD^*)}\\
				& \qquad \leq \|K_t\| 
				\Big(\int_0^t \|f'(\bar y(s))^*\|_{L(Y_q^*, \DD^*)}^{q'}\,\d s\Big)^{1/q'}\, \|\varphi_1 - \varphi_2\|_{C([0,t]; Y_q^*)} \\
				&\qquad \leq \|K_t\|
				\, \|f'(\bar y)^*\|_{L^{r}(0,T;L(Y_q^*,\DD^*))} \, 
				t^{\frac{r-q'}{r q'}} \, \|\varphi_1 - \varphi_2\|_{C([0,t]; Y_q^*)}.
			\end{aligned}
		\end{equation}
		Now, the norm of $K_t$ can be estimated independently of $t$, see Lemma~\ref{lem:normGt} in the appendix, 
		where an analogous result is proven for the adjoint of $K_t$. 
		Thus $\Phi_t$ is indeed a contraction for $t$ sufficiently small and, by Banach's contraction principle, 
		$\Phi_t$ admits a unique fixed point, which, by the mapping properties of $K_t$, is also an 
		element of $\W^{q'}(0,t;X^*, \DD^*)$. Hence, by construction of $K_t$, it is also a strong solution of \eqref{eq:adjeq}.
		A standard concatenation argument finally implies the existence of a unique strong solution on the whole 
		time interval.
	\end{proof}
	
	\begin{theorem}\label{thm:implfunc}
		Suppose that $\bar y\in L^q(0,T;\DD)$ is a weak solution of 
		\eqref{eq:stateeqg} associated with right hand side $\bar u \in [\W^{q'}_T(X^*, \DD^*)]^*$.
		Then there exist neighborhoods $\UU(\bar u) \subset [\W_T^{q'}(X^*,\DD^*)]^*$ of $\bar u$ 
		and $\YY(\bar y)\subset L^q(0,T;\DD)$ of $\bar y$
		and a continuously Fr\'echet-differentiable mapping $\bar S: \UU(\bar u) \to \YY(\bar y)$ 
		such that $\bar S(u)$, $u\in \UU(\bar u)$, is the only weak solution in $\YY(\bar y)$ of \eqref{eq:stateeqg}.     
		
		Furthermore, given $u\in \UU(\bar u)$ and an arbitrary direction $h\in [\W_T^{q'}(X^*,\DD^*)]^*$, 
		the derivative $\eta := \bar S'(u)h \in L^q(0,T;\DD)$ is given by the unique weak solution of 
		\begin{equation}\label{eq:lineqweak}
			\begin{aligned}
				\int_0^T \big(- \dual{\eta(t)}{v'(t)}_{\DD, \DD^*} 
				+ \dual{\eta(t)}{A^*v(t)}_{\DD, \DD^*} 
				+ \dual{f'(y(t))\eta(t)}{v(t)}_{Y_q, Y_q^*} \big)\,\d t  & \\[-1ex]
				= \dual{h}{v} \quad \forall\, v\in \W_T^{q'}(X^*,\DD^*)   & 
			\end{aligned}
		\end{equation} 
		where $y := \bar S(u)$.
	\end{theorem}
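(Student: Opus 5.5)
We will apply the implicit function theorem to the weak state equation written as an operator equation. Define
\begin{equation*}
	F: L^q(0,T;\DD) \times [\W_T^{q'}(X^*,\DD^*)]^* \to [\W_T^{q'}(X^*,\DD^*)]^*,
\end{equation*}
\begin{equation*}
	\dual{F(y,u)}{v} := \int_0^T \big( - \dual{y}{v'} + \dual{y}{A^*v} + \dual{f(y)}{v}_{Y_q,Y_q^*}\big)\,\d t - \dual{u}{v} - \dual{y_0}{\gamma_0 v}_{Y_q,Y_q^*}.
\end{equation*}
Then $y$ is a weak solution for $u$ if and only if $F(y,u)=0$, and by hypothesis $F(\bar y,\bar u)=0$.

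\textbf{Step 1: $F$ is well defined and continuously differentiable.} The linear part $y \mapsto \big(v\mapsto \int_0^T -\dual{y}{v'}+\dual{y}{A^*v}\,\d t\big)$ is bounded, because $v', A^*v \in L^{q'}(0,T;\DD^*)$ for $v\in \W_T^{q'}(X^*,\DD^*)$. The term $y_0$ is bounded because $\gamma_0$ is continuous into $Y_q^*$, and the dependence on $u$ is affine. For the nonlinear part, Lemma~\ref{lem:fFrechet} shows that $f$ is $C^1$ from $L^q(0,T;\DD)$ to $L^1(0,T;Y_q)$. Combined with the continuous embedding $\W_T^{q'}(X^*,\DD^*)\embed L^\infty(0,T;Y_q^*)$, which makes $L^1(0,T;Y_q)\embed [\W_T^{q'}]^*$ continuous, this gives $F\in C^1$. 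Its partial derivatives are $\partial_u F = -\mathrm{id}$ and
\begin{equation*}
	\dual{\partial_y F(\bar y,\bar u)\eta}{v} = \int_0^T \big( -\dual{\eta}{v'}+\dual{\eta}{A^*v}+\dual{f'(\bar y)\eta}{v}\big)\,\d t .
\end{equation*}

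\textbf{Step 2: $\partial_y F(\bar y,\bar u)$ is an isomorphism.} This is the main obstacle, and we plan to settle it by transposition using Lemma~\ref{lem:adjointeq}. Define
\begin{equation*}
	L: \W_T^{q'}(X^*,\DD^*) \to L^{q'}(0,T;\DD^*), \qquad L\varphi := -\varphi' + A^*\varphi + f'(\bar y)^*\varphi .
\end{equation*}
The operator $L$ is bounded: the estimate \eqref{eq:fprimestargrowth} gives $f'(\bar y)^*\in L^r(0,T;L(Y_q^*,\DD^*))$ with $r>q'$, and $\varphi \in C([0,T];Y_q^*)$. Lemma~\ref{lem:adjointeq} says precisely that $L$ is bijective, so $L$ is an isomorphism by the open mapping theorem. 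Its Banach adjoint is
\begin{equation*}
	L^*: L^q(0,T;\DD) = [L^{q'}(0,T;\DD^*)]^* \to [\W_T^{q'}(X^*,\DD^*)]^*,
\end{equation*}
where the duality identification uses the reflexivity of $\DD$. Unwinding the definitions, $L^*\eta$ tested against $v$ equals $\int_0^T \dual{\eta}{-v'+A^*v+f'(\bar y)^*v}\,\d t$, which is exactly $\dual{\partial_yF(\bar y,\bar u)\eta}{v}$. Hence $\partial_yF(\bar y,\bar u)=L^*$ is an isomorphism as the adjoint of an isomorphism. The only care needed is checking that $\dual{f'(\bar y)\eta}{v}$ pairs correctly pointwise in time, and that the integrability exponents match ($q$ against $q'$, using $r>q'$).

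\textbf{Step 3: conclusion.} The implicit function theorem now provides neighborhoods $\UU(\bar u)$, $\YY(\bar y)$ and a $C^1$ map $\bar S$ such that, within $\YY(\bar y)$, the equation $F(y,u)=0$ holds exactly for $y=\bar S(u)$. Differentiating the identity $F(\bar S(u),u)=0$ gives $\partial_yF(y,u)\,\bar S'(u)h = h$ with $y=\bar S(u)$, which is \eqref{eq:lineqweak}. Uniqueness of the solution $\eta$ for $u\in\UU(\bar u)$ follows from Step 2 applied at $y$ instead of $\bar y$; this is admissible since Lemma~\ref{lem:adjointeq} holds for every $y\in L^q(0,T;\DD)$. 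Alternatively, we may shrink the neighborhoods so that $\partial_yF$ stays invertible by continuity.
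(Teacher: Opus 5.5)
Your proposal is correct and follows the paper's proof essentially step for step. The weak formulation is encoded as the root condition $F(y,u)=0$, and $\partial_y F(\bar y,\bar u)$ is identified as the Banach adjoint of $-\partial_t + A^* + f'(\bar y)^*\colon \W_T^{q'}(X^*,\DD^*)\to L^{q'}(0,T;\DD^*)$, whose continuous invertibility comes from Lemma~\ref{lem:adjointeq}, before the implicit function theorem is applied; two minor points in your favor are that your signs in $F$ ($-\dual{y}{v'}$ and $-\dual{y_0}{\gamma_0 v}$) are the ones consistent with \eqref{eq:weak}, and that noting Lemma~\ref{lem:adjointeq} applies at every $y=\bar S(u)$ makes the uniqueness of $\eta$ in \eqref{eq:lineqweak} explicit, which the paper leaves implicit.
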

	
	\begin{proof}
		Let us define the following function associated with the notion of a weak solution
		\begin{equation*}
			\begin{aligned}
				& F: L^{q}(0,T;\DD) \times [\W_T^{q'}(X^*,\DD^*)]^* \to [\W_T^{q'}(X^*,\DD^*)]^*, \\
				& \dual{F(y,u)}{v} := 
				\begin{aligned}[t]
					& \int_0^T \big( \dual{y(t)}{v'(t)}_{\DD, \DD^*} + \dual{y(t)}{A^*v(t)}_{\DD, \DD^*} 
					+ \dual{f(y(t))}{v(t)}_{Y_q, Y_q^*} \big)\,\d t \\
					& \qquad - \dual{u}{v} + \dual{y_0}{\gamma_0 v}_{Y_q, Y_q^*}
				\end{aligned}
			\end{aligned}
		\end{equation*}
		so that $(\bar y, \bar u)$ is a root of $F$. 
		Moreover, since 
		\begin{equation*}
			L^q(0,T;\DD) \ni y \mapsto \int_0^T \dual{y}{(-\partial_T + A^*)(\cdot)} \,\d t \in [\W^{q'}_T(X^*, \DD^*)]^*
		\end{equation*}
		is linear and bounded and $f$ is continuously Fr\'echet-differentiable from $L^q(0,T;\DD)$ to $L^1(0,T;Y_q)$ 
		by Lemma~\ref{lem:fFrechet}, the mapping $F$ is partially Fr\'echet-differentiable w.r.t.\ $y$
		from $L^q(0,T;\DD)$ to $[\W^{q'}_T(X^*, \DD^*)]^*$ with continuous derivative given by
		\begin{equation*}
			\begin{aligned}
				& \dual{\partial_y F(\bar y, \tilde u)\eta}{v} \\
				&\qquad = \begin{aligned}[t]
					\int_0^T \big( & -\dual{\eta(t)}{v'(t)}_{\DD, \DD^*} \\[-1ex] 
					& + \dual{\eta(t)}{A^*v(t)}_{\DD, \DD^*} + \dual{f'(\bar y(t))\eta(t)}{v(t)}_{Y_q, Y_q^*} \big)\,\d t 
				\end{aligned}\\
				&\qquad = \dual{\eta(t)}{(- \partial_t + A^* + f'(\bar y)^*)v(t)}_{L^q(0,T;\DD), L^{q'}(0,T;\DD^*)}, 
				\quad v\in \W^{q'}_T(X^*, \DD^*).
			\end{aligned}
		\end{equation*}
		Note that $f'(\bar y)^* : [L^1(0,T;Y_q)]^* = L^\infty(0,T;Y_q^*) \to L^{q'}(0,T;\DD^*)$, 
		since $Y_q$ satisfies the Radon-Nikod\'ym condition, and again remember that 
		$\W^{q'}_T(X^*, \DD^*) \embed C([0,T];Y_q^*)$.
		We thus have
		\begin{equation*}
			\partial_y F(\bar y, \tilde u) = (- \partial_t + A^* + f'(\bar y)^*)^* : L^q(0,T;\DD) \to [\W^{q'}_T(X^*, \DD^*)]^* . 
		\end{equation*}
		By Lemma~\ref{lem:adjointeq}, we know that the operator 
		$- \partial_t + A^* + f'(\bar y)^* : \W^{q'}_T(X^*, \DD^*) \to L^{q'}(0,T;\DD^*)$ is continuously invertible and 
		so is its adjoint, which is just $\partial_y F(\bar y, \bar u)$. 
		Therefore, the implicit function theorem implies the assertion.
	\end{proof}
	
	\begin{corollary}\label{cor:implfunc}
		Assume that, for every $u\in L^q(0,T;X)$, there exists a unique weak solution of the state equation such that 
		there exists a control-to-state mapping $S: L^q(0,T;X) \to L^q(0,T;\DD)$. 
		Then, for every $u\in L^q(0,T;X)$, 
		there exists a open neighborhood $\UU(u)\subset [\W_T^{q'}(X^*, \DD^*)]^*$ such that $S$ can be extended 
		in $\UU(u)$ to an  operator $\bar S: \UU(u) \to L^q(0,T;\DD)$ 
		such that $\bar S |_{\UU(u) \cap L^q(0,T;X))} = S|_{\UU(u) \cap L^q(0,T;X)}$ 
		and $\bar S$ is continuously Fr\'echet-differentiable in $u$ with Fr\'echet-derivative in direction 
		$h \in [\W_T^{q'}(X^*, \DD^*)]^*$ given by the solution of \eqref{eq:lineqweak}.
		Moreover, $S$ itself is continuously Fr\'echet-differentiable and the Fr\'echet-derivative at $u \in L^q(0,T;X)$ in direction 
		$h \in L^q(0,T;X)$ also solves \eqref{eq:lineqweak}.
	\end{corollary}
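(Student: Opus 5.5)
The plan is to derive the corollary from Theorem~\ref{thm:implfunc}, applied at the point $\bar u := u$ and $\bar y := S(u)$, and then to use uniqueness of weak solutions to identify the local solution map with $S$.

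Fix $u\in L^q(0,T;X)$ and set $\bar y := S(u)$. By \eqref{eq:embedLqWstar}, $u$ is an element of $[\W_T^{q'}(X^*,\DD^*)]^*$. The embedding $\iota: L^q(0,T;X)\embed[\W_T^{q'}(X^*,\DD^*)]^*$ is linear and bounded, because $\W_T^{q'}(X^*,\DD^*)\embed L^{q'}(0,T;X^*)$. So $\bar y$ is a weak solution for the right-hand side $\iota u$. Theorem~\ref{thm:implfunc} then provides neighborhoods $\UU(u)$ and $\YY(\bar y)$ and a $C^1$ map $\bar S:\UU(u)\to\YY(\bar y)$. For each $w\in\UU(u)$, $\bar S(w)$ is the only weak solution lying in $\YY(\bar y)$, and its derivative in direction $h$ is given by \eqref{eq:lineqweak}.

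The main obstacle is the identity $\bar S = S$ on $\UU(u)\cap L^q(0,T;X)$. For $w$ in this set, $S(w)$ is the unique weak solution overall, and $\bar S(w)$ is also a weak solution. Uniqueness therefore gives $\bar S(w)=S(w)$ directly, with no need to know whether $S(w)$ lies in $\YY(\bar y)$. This step relies on global uniqueness being part of the hypothesis, and on the consistency check that the weak formulation with $u\in L^q(0,T;X)$ viewed through \eqref{eq:embedLqWstar} is the same as the one used for $\iota u$. That check holds by definition.

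For the differentiability of $S$ itself, note that $\iota^{-1}(\UU(u))$ is an open neighborhood of $u$ in $L^q(0,T;X)$. On this set $S=\bar S\circ\iota$. The chain rule then shows that $S$ is continuously Fréchet-differentiable there, with $S'(u)h=\bar S'(\iota u)\iota h$. This derivative is the weak solution of \eqref{eq:lineqweak} with right-hand side $\iota h$, that is, with $\dual{h}{v}=\int_0^T\dual{h(t)}{v(t)}_{X,X^*}\,\d t$. Because $u$ was arbitrary, $S$ is $C^1$ on all of $L^q(0,T;X)$.
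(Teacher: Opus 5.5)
Your proposal is correct and takes essentially the paper's approach: apply Theorem~\ref{thm:implfunc} at $(\bar u,\bar y)=(u,S(u))$, use the assumed global uniqueness of weak solutions to get $\bar S = S$ on $\UU(u)\cap L^q(0,T;X)$, and transfer differentiability through the bounded embedding $L^q(0,T;X)\embed[\W_T^{q'}(X^*,\DD^*)]^*$. The only difference is that the paper writes out the remainder estimate directly instead of applying the chain rule to $\bar S\circ\iota$; the two are equivalent, and your chain-rule version also makes the continuity of $S'$ explicit.
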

	
	\begin{proof}
		Let $u\in L^q(0,T;X)$ be arbitrary. By Theorem~\ref{thm:implfunc}, there exists a neighborhood 
		$\UU(u) \subset [\W_T^{q'}(X^*, \DD^*)]^*$ around $u$ such that $S$ can be extended to a
		Fr\'echet-differentiable operator 
		$\bar S : \UU(u) \to L^q(0,T;\DD)$ and $\bar S(w)$ is a weak solution associated with $w \in \UU(u)$.
		Thus it holds that $S(w) = \bar S(w)$ for all $w \in \UU(u) \cap L^q(0,T;X)$,
		where, with a slight abuse of notation, we neglected the embedding operator associated with 
		$L^q(0,T;X) \embed [\W_T^{q'}(X^*, \DD^*)]^*$ as given in \eqref{eq:embedLqWstar}. 
		Therefore, we obtain for all $h\in L^q(0,T;X)$ with $u+ h \in \UU(u)$ that
		\begin{equation*}
			S(u + h) = \bar S(u+h) 
			= \bar S(u) + \bar S'(u)h + r(u;h) = S(u) + \bar S'(u)h + r(u;h),
		\end{equation*}
		where the remainder term satisfies
		\begin{equation*}
			\frac{\|r(u;h)\|_{L^q(0,T;\DD)}}{\|h\|_{L^q(0,T;X)}}
			\leq  C\, \frac{\|r(u;h)\|_{L^q(0,T;\DD)}}{\|h\|_{[\W_T^{q'}(X^*, \DD^*)]^*}} \to 0
			\quad \text{as } h \to 0 \text{ in } L^q(0,T;X),
		\end{equation*}  
		which shows that $S'(u) = \bar S'(u)$.
	\end{proof}
	
	\subsection{First-order Necessary Optimality Conditions}
	
	In view of Lemma~\ref{lem:BgleichbarB}, \eqref{eq:ocp} can be reformulated as follows:
	\begin{equation*}
		\eqref{eq:ocp} 
		\Longleftrightarrow
		\left\{\;
		\begin{aligned}
			\min \quad & J(y,\tau) \\
			\text{s.t.} \quad & \text{$y\in L^q(0,T;\DD)$ is a weak solution associated with $u = \bar B(\tau)$},\\
			& \tau \in \PP.
		\end{aligned}
		\right.
	\end{equation*}  
	Let $(\bar y, \bar \tau) \in L^q(0,T;\DD) \times \R^n$ be a locally optimal solution of \eqref{eq:ocp} and set 
	$\bar u = \bar B(\bar \tau)$. 
	According to its definition in \eqref{eq:Bweak2}, $\bar B$ is a mapping with range in $[C^{0,\alpha}(0,T;Z)]^*$
	with $Z$ as defined in \eqref{eq:defZ}.
	In the following, we will consider $\bar B$ with range in $[\W_T^{q'}(X^*, \DD^*)]^*$, which is feasible due to \eqref{eq:defZ}. 
	To ease notation, this operator is denoted by $\bar B$, too. Note that, due to 
	$[\W_T^{q'}(X^*, \DD^*)]^* \embed [C^{0,\alpha}(0,T;Z)]^*$, Theorem~\ref{thm:barBdiff} implies that 
	$\bar B$ is continuously Fr\'echet-differentiable from $\R^n$ to $[\W_T^{q'}(X^*, \DD^*)]^*$ 
	with derivative as given \eqref{eq:barBabl}.
	
	According to Theorem~\ref{thm:implfunc}, there exists a neighborhood $\UU(\bar u) \subset [\W_T^{q'}(X^*, \DD^*)]^*$ 
	and an operator $\bar S: \UU(\bar u) \to L^q(0,T;\DD)$
	such that $\bar S(u)$ is a weak solution associated with $u\in \UU(\bar u)$ and it is the only one in a neighborhood of $\bar y$. 
	Due to the continuity of $\bar B$, there exists $\varepsilon > 0$ such that 
	$\bar B(B_{\R^n}(\bar\tau, \varepsilon)) \subset \UU(\bar u)$.
	Let us consider the following optimization problem
	\begin{equation}\tag{OCP$_\varepsilon$}\label{eq:ocpeps}
		\left\{\quad
		\begin{aligned}
			\min \quad & J(y,\tau) \\
			\text{s.t.} \quad & y = \bar S(\bar B(\tau)),
			& \tau \in \PP \cap B_{\R^n}(\bar\tau, \varepsilon).
		\end{aligned}
		\right.
	\end{equation}
	Clearly, since $\bar S(u)$ is a weak solution, $\bar\tau$ is also a local minimizer of this problem. 
	Moreover, because $\bar B$ is differentiable from $\R^n$ to $[C^{0,\alpha}(0,T;Z)]^*$ by Theorem~\ref{thm:barBdiff}
	and $[\W_T^{q'}(X^*, \DD^*)]^* \embed [C^{0,\alpha}(0,T;Z)]^*$, it is also 
	Fr\'echet-differentiable from $\R^n$ to $[\W_T^{q'}(X^*, \DD^*)]^*$. 
	Furthermore, $\bar S$ is Fr\'echet-differentiable in $\UU(\bar u)$ by Theorem~\ref{thm:implfunc}
	and, thus the differentiability of $J$ by Assumption~\ref{assu:obj} along with the chain rule
	implies that $J \circ \bar S \circ \bar B: \R^n \to \R$ is Fr\'echet-differentiable in $\bar\tau$. 
	Thus $\bar\tau$ satisfies the following variational inequality
    \begin{equation}\label{eq:VI}
    \begin{aligned}
        \dual{\partial_y J(\bar y, \bar \tau)}{\bar S'(\bar u) \bar B'(\bar\tau)(\tau - \bar \tau)}_{L^{q'}(0,T;\DD^*), L^q(0,T;\DD)} & \\
		+ ( \partial_\tau J(\bar y, \bar\tau) , \tau - \bar\tau)_{\R^n} \geq 0 & 
		\quad \forall\, \tau \in \PP \cap B_{\R^n}(\bar\tau, \varepsilon)    
    \end{aligned}
    \end{equation}
	and hence, since $\bar\tau \in \interior(B_{\R^n}(\bar\tau, \varepsilon))$,
	\begin{equation*}
		\begin{aligned}
			& (\bar B'(\bar\tau)^* \bar S'(\bar u)^* \partial_y J(\bar y, \bar \tau) + \partial_\tau J(\bar y, \bar\tau), 
			\tau - \bar \tau)_{\R^n} \geq 0 \quad \forall\, \tau \in \PP \\
			& \qquad\qquad \Longleftrightarrow \quad
			\bar\tau = \Pi_{\PP}\big( \bar B'(\bar\tau)^* \bar S'(\bar u)^* \partial_y J(\bar y, \bar \tau) 
			+ \partial_\tau J(\bar y, \bar\tau)\big),
		\end{aligned}
	\end{equation*}
	where $\Pi_{\PP}: \R^n \to \R^n$ denotes the Eucildean projection on the polyhedron $\PP$.
	Let us define $p := \bar S'(\bar u)^* \partial_y J(\bar y, \bar \tau) \in \W^{q'}_T(X^*, \DD^*)$.
	As $\bar S(\bar u)^*$ is the solution operator of the weak form of the linearized equation in \eqref{eq:lineqweak}, 
	standard arguments based on the formula of integration by parts from \cite[Proposition~5.1]{Ama03} show that 
	$p$ is the unique strong solution of 
	\begin{equation}\label{eq:adjeq}
		- p' + A^* p + f'(\bar y)^* p = \partial_y J(\bar y, \bar \tau), \quad  
		p(T) = 0.
	\end{equation}
	Note that $\partial_y J(\bar y, \bar \tau) \in [L^q(0,T;\DD)]^* = L^{q'}(0,T;\DD^*)$ such that \eqref{eq:adjeq} 
	admits a unique solution according to Lemma~\ref{lem:adjointeq}.
	Indeed, testing \eqref{eq:adjeq} with $\eta = \bar S'(\bar u) h\in L^q(0,T;\DD)$ 
	with $h\in [\W_T^{q'}(X^*, \DD^*)]^*$ arbitrary yields
	\begin{equation*}
		\begin{aligned}
			& \dual{\bar S'(\bar u)^*\partial_y J(\bar y, \bar \tau)}{h} \\
			& \quad = \int_0^T \dual{\partial_y J(\bar y, \bar \tau)(t)}{\eta(t)}_{\DD^*, \DD}\,\d t \\
			&\quad = \int_0^T \big( -\dual{\eta(t)}{p'(t)}_{\DD, \DD^*}
			+ \dual{\eta(t)}{A^*p(t)}_{\DD, \DD^*} + \dual{\eta(t)}{f'(\bar y(t))^*p(t)}_{\DD, \DD^*} \big)\,\d t \\
			& \quad = \dual{h}{p}
		\end{aligned}
	\end{equation*}
	and, since $h$ was arbitrary, this gives the claim. Bearing in mind the structure of the derivative of $\bar B$ in 
	\eqref{eq:barBabl}, we thus obtain the following
	
	\begin{theorem}\label{thm:KKT}
		Let $(\bar y, \bar \tau) \in L^q(0,T;\DD) \times \R^n$ be a locally optimal solution of \eqref{eq:ocp}.
		Then there exists an adjoint state $p\in \W^{q'}_T(X^*, \DD^*)$ 
		such that the following optimality system is fulfilled:
		\begin{equation*}
			\begin{gathered}
				\begin{aligned}
					\int_0^T \big( - \dual{\bar y(t)}{v'(t)}_{\DD,\DD^*}  &+ \dual{\bar y(t)}{A^*v(t)}_{\DD,\DD^*} 
					+ \dual{f(\bar y(t))}{v(t)}_{Y_q, Y_q^*} \big)\,\d t   \\[-1ex]
					& = \dual{\bar B(\bar \tau)}{v} + \dual{y_0}{\gamma_0 v}_{Y_q, Y_q^*}
					\quad \forall\, v\in \W_T^{q'}(X^*,\DD^*) ,
				\end{aligned} \\[1ex]
				\begin{aligned}
					- p'(t) + A^* p(t) + f'(\bar y(t))^* p(t) &= \partial_y J(\bar y, \bar \tau)(t) 
					& & \text{in } \DD^* \text{ f.a.a.\ } t\in (0,T), \\
					p(T) &= 0 & & \text{in } Y_q^*,
				\end{aligned} \\[1ex]
				\begin{aligned}
					\bar \tau &= \Pi_{\PP}\big( \bar B'(\bar\tau)^* p + \partial_\tau J(\bar y, \bar\tau)\big), \\[1ex]
					\bar B'(\bar\tau)^* p &= 
					\begin{pmatrix}
						- \dual{\psi_1}{p(\tau_1)}_{Z^*, Z} \\
						\big(\dual{\psi_{i-1} - \psi_i}{p(\tau_i)}_{Z^*, Z}\big)_{i=2}^{n-1} \\
						\dual{\psi_{n-1}}{p(\tau_n)}_{Z^*, Z}
					\end{pmatrix}\,.
				\end{aligned}
			\end{gathered}
		\end{equation*}
	\end{theorem}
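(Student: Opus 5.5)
Most of the argument is already in the discussion preceding Theorem~\ref{thm:KKT}, so the proof mainly collects those steps and computes $\bar B'(\bar\tau)^*p$ explicitly. First, by Lemma~\ref{lem:BgleichbarB} the state equation in the optimality system is the weak form \eqref{eq:weak} with $u = \bar B(\bar\tau) = B(\bar\tau)$, since $\bar\tau\in\PP$; hence $\bar y$ satisfies it by feasibility. Next, Theorem~\ref{thm:implfunc} gives the local solution operator $\bar S$ on a neighborhood $\UU(\bar u)$. Continuity of $\bar B$ (Theorem~\ref{thm:barBdiff}) yields $\varepsilon>0$ such that $\bar\tau$ is a local minimizer of \eqref{eq:ocpeps}. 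By the chain rule, $\tau\mapsto J(\bar S(\bar B(\tau)),\tau)$ is Fréchet differentiable at $\bar\tau$. Since $\PP$ is convex, this gives the variational inequality \eqref{eq:VI}. Because the ball is open, the restriction to $B_{\R^n}(\bar\tau,\varepsilon)$ can be dropped: for $\tau\in\PP$ the point $\bar\tau+s(\tau-\bar\tau)$ lies in $\PP$ and in the ball for small $s>0$, and the inequality is linear in $\tau - \bar\tau$. The standard characterization of the projection onto a closed convex set then gives the projection formula. I would use the sign convention for the projection exactly as the paper does.

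Second, I would define $p := \bar S'(\bar u)^*\partial_y J(\bar y,\bar\tau)$. We have $\partial_y J(\bar y,\bar\tau)\in L^{q'}(0,T;\DD^*)$, so Lemma~\ref{lem:adjointeq} gives a unique strong solution $\tilde p\in\W^{q'}_T(X^*,\DD^*)$ of \eqref{eq:adjeq}. I would show $p=\tilde p$ by the computation above: test with $\eta=\bar S'(\bar u)h$, integrate by parts using \cite[Prop.~5.1]{Ama03}, and use \eqref{eq:lineqweak} to get $\dual{\bar S'(\bar u)^*\partial_yJ}{h}=\dual{h}{\tilde p}$ for all $h\in[\W^{q'}_T(X^*,\DD^*)]^*$. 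Identifying $\W^{q'}_T$ with a subspace of its bidual, which is reflexive because $X,\DD$ are reflexive, gives $p=\tilde p$. In particular, the adjoint equation holds a.e.\ in $\DD^*$ with $p(T)=0$ in $Y_q^*$.

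Finally, I would compute $\bar B'(\bar\tau)^*p\in\R^n$ from \eqref{eq:barBabl}. Here $p\in\W^{q'}_T\embed C^{0,\alpha}(0,T;Z)$ by \eqref{eq:defZ}, and since $\bar\tau_i\in[0,T]$ we have $V(\bar\tau_i)=p(\bar\tau_i)$. For $h\in\R^n$,
\begin{equation*}
(\bar B'(\bar\tau)^*p,h)_{\R^n}=\sum_{i=1}^{n-1}\big(h_{i+1}\dual{\psi_i}{p(\bar\tau_{i+1})}-h_i\dual{\psi_i}{p(\bar\tau_i)}\big).
\end{equation*}
Collecting the coefficients of $h_j$ is an index shift. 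The coefficient of $h_1$ is $-\dual{\psi_1}{p(\bar\tau_1)}$. For $2\le j\le n-1$ it is $\dual{\psi_{j-1}-\psi_j}{p(\bar\tau_j)}$. The coefficient of $h_n$ is $\dual{\psi_{n-1}}{p(\bar\tau_n)}$. This is the stated vector.

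The only delicate step is justifying that $\bar S'(\bar u)^*$ maps into $\W^{q'}_T$ rather than merely into its bidual, and that it coincides with the strong adjoint solution. This is handled by reflexivity together with the uniqueness from Lemma~\ref{lem:adjointeq}. Everything else is bookkeeping.
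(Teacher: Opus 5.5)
Your proposal is correct and follows the paper's argument (the discussion preceding the theorem) essentially step for step: the reduction to \eqref{eq:ocpeps} via Lemma~\ref{lem:BgleichbarB} and Theorem~\ref{thm:implfunc}; the variational inequality \eqref{eq:VI} with the ball constraint removed; the identification of $p=\bar S'(\bar u)^*\partial_y J(\bar y,\bar\tau)$ with the strong solution of \eqref{eq:adjeq}, by testing with $\eta=\bar S'(\bar u)h$ and using \eqref{eq:lineqweak}; and the index shift in \eqref{eq:barBabl}. The one thing you should not adopt ``exactly as the paper does'' is the projection formula, because the displayed identity $\bar\tau=\Pi_{\PP}\big(\bar B'(\bar\tau)^*p+\partial_\tau J(\bar y,\bar\tau)\big)$ is not equivalent to the variational inequality: the standard characterization of $\Pi_{\PP}$ gives $\bar\tau=\Pi_{\PP}\big(\bar\tau-(\bar B'(\bar\tau)^*p+\partial_\tau J(\bar y,\bar\tau))\big)$ (the form used in Algorithm~\ref{alg:projgrad}), so state it with the missing $\bar\tau-$ restored.
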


	\section{Proximal Gradient Method}\label{sec:prox}
	
	Using the differentiability results from Section~\ref{sec:diffbarkeit}, we can formulate and analyze 
	a proximal gradient method for the numerical solution of \eqref{eq:ocp}. For this purpose,
	we require the following additional assumption:
	
	\begin{assumption}\label{assu:stateop}
		We assume that, for every $u \in L^q(0,T;X)$, there exists a unique weak solution $y \in L^q(0,T;\DD)$ 
		of the state equation. The associated solution operator is denoted by $S: L^q(0,T;X) \to L^q(0,T;\DD)$.
	\end{assumption}

    \begin{remark}
        It is to be noted that, without Assumption~\ref{assu:stateop}, the optimal control problem~\eqref{eq:ocp} 
        is actually not really meaningful, since there would be no unique response to a control without this assumption 
        and consequently, it does not make much sense to control the state system. However, in practice, 
        Assumption~\ref{assu:stateop} can be guaranteed by requiring suitable conditions on the nonlinearity $f$. 
        An example for such a setting will be given in Theorem~\ref{thm:gradlip} below, see also Remark~\ref{rem:assus}.	
    \end{remark}

	As an immediate consequence of Corollary~\ref{cor:implfunc} and Theorem~\ref{thm:barBdiff}, we obtain the following 
	result, similarly to the derivation of the optimality system in the previous section:
	
	\begin{corollary}\label{cor:JJdiff}
		The reduced objective defined by 
		\begin{equation*}
			\JJ: \R^n \ni \tau \mapsto J(S(\bar B(\tau)), \tau) \in \R
		\end{equation*}
		is continuously Fr\'echet-differentiable with derivative 
		\begin{equation}\label{eq:gradJ}
			\nabla \JJ(\tau) = 
			\begin{pmatrix}
				- \dual{\psi_1}{P(\tau_1)}_{Z^*, Z} \\
				\big(\dual{\psi_{i-1} - \psi_i}{P(\tau_i)}_{Z^*, Z}\big)_{i=2}^{n-1} \\
				\dual{\psi_{n-1}}{P(\tau_n)}_{Z^*, Z}
			\end{pmatrix} 
			+ \nabla_\tau J(y, \tau),
		\end{equation}
		where $y = S(\bar B(\tau))$ and $p \in \W_T^{q'}(X^*, \DD^*)$ is a strong solution of the adjoint equation 
		\begin{equation*}
			\begin{aligned}
				- p'(t) + A^* p(t) + f'(y(t))^* p(t) &= \partial_y J(y, \tau)(t) 
				& & \text{in } \DD^* \text{ f.a.a.\ } t\in (0,T), \\
				p(T) &= 0 & & \text{in } Y_q^*,    
			\end{aligned} 
		\end{equation*}
		and $P \in C^{0,\alpha}(\R;Z)$ is the extension of $p$ by constant continuation as defined in \eqref{eq:extensiononR}.
	\end{corollary}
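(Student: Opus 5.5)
The plan is to write $\JJ = J\circ(\bar S\circ \bar B,\mathrm{id})$ locally and apply the chain rule, then identify the gradient through the adjoint state, as in the derivation leading to Theorem~\ref{thm:KKT}.

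\textbf{Step 1: a global chain-rule representation.} Fix $\tau\in\R^n$ and set $u := \bar B(\tau)\in[\W_T^{q'}(X^*,\DD^*)]^*$. Since $\bar B(\tau)$ is in general only an element of this dual space and not of $L^q(0,T;X)$, we read $S(\bar B(\tau))$ as the weak solution associated with $u$. For $\tau\in\PP$ this agrees with $S$ by Lemma~\ref{lem:BgleichbarB}, because $B(\tau)\in L^\infty(0,T;Y_{\vartheta,\infty})\subset L^q(0,T;X)$.

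Let $y$ be this weak solution. Theorem~\ref{thm:implfunc} yields a neighborhood $\UU(u)$ and a $C^1$ map $\bar S$ on $\UU(u)$ producing the locally unique weak solution. If $u\in L^q(0,T;X)$, Corollary~\ref{cor:implfunc} together with Assumption~\ref{assu:stateop} ensures that $\bar S$ coincides with the globally defined solution map. By continuity of $\bar B$ (Theorem~\ref{thm:barBdiff}, composed with the embedding $[C^{0,\alpha}(0,T;Z)]^*\embed[\W_T^{q'}(X^*,\DD^*)]^*$ dual to \eqref{eq:defZ}), there is $\varepsilon>0$ with $\bar B(B_{\R^n}(\tau,\varepsilon))\subset \UU(u)$. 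On this ball, $\JJ(\sigma)=J(\bar S(\bar B(\sigma)),\sigma)$.

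\textbf{Main obstacle.} The delicate point is to make sure that the local branch $\bar S$ really is the solution map used in the definition of $\JJ$ near $\tau$. This is immediate when $u$ is an $L^q$ control, which covers all of $\PP$. Off $\PP$, I would either restrict attention to the uniqueness of weak solutions that is implicitly assumed in defining $\JJ$, or argue via the continuity of $\bar S$ that the branch can be continued consistently.

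\textbf{Step 2: differentiability.} On the ball, $\JJ$ is a composition of $C^1$ maps: $\bar B$ is $C^1$ into $[\W_T^{q'}]^*$, $\bar S$ is $C^1$ into $L^q(0,T;\DD)$, and $J$ is Fréchet differentiable by Assumption~\ref{assu:obj}. The chain rule then gives
\[
\nabla\JJ(\tau)=\bar B'(\tau)^*\bar S'(u)^*\partial_yJ(y,\tau)+\nabla_\tau J(y,\tau).
\]
For \emph{continuous} differentiability I need $\tau\mapsto\partial_yJ(\bar S\bar B\tau,\tau)$ to be continuous. I would either take continuous Fréchet differentiability of $J$ as the intended reading of Assumption~\ref{assu:obj}, or note that this is what is needed. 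Given that, continuity of $\bar S'$ and $\bar B'$ yields continuity of $\nabla\JJ$.

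\textbf{Step 3: identifying the gradient.} Exactly as before Theorem~\ref{thm:KKT}, set $p:=\bar S'(u)^*\partial_yJ(y,\tau)$. Lemma~\ref{lem:adjointeq} guarantees that the adjoint equation has a unique strong solution in $\W_T^{q'}(X^*,\DD^*)$. Testing it with $\eta=\bar S'(u)h$ and using \eqref{eq:lineqweak} shows $\dual{h}{p}=\dual{\partial_yJ}{\eta}$ for all $h$, so $p$ is the adjoint state.

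Inserting $v=p$ and its extension $P$ into \eqref{eq:barBabl} gives
\[
(\bar B'(\tau)^*p)\cdot h=\sum_{i=1}^{n-1}\dual{\psi_i}{h_{i+1}P(\tau_{i+1})-h_iP(\tau_i)}.
\]
Collecting the coefficients of each $h_j$ yields $-\dual{\psi_1}{P(\tau_1)}$ for $j=1$, $\dual{\psi_{j-1}-\psi_j}{P(\tau_j)}$ for $2\le j\le n-1$, and $\dual{\psi_{n-1}}{P(\tau_n)}$ for $j=n$. This is exactly \eqref{eq:gradJ}.
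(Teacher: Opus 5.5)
Your proposal is correct and is essentially the paper's argument: the paper simply calls the corollary an immediate consequence of Corollary~\ref{cor:implfunc} and Theorem~\ref{thm:barBdiff} via the chain rule, with the gradient identified through the adjoint state exactly as in the derivation preceding Theorem~\ref{thm:KKT}, and your Steps~1--3 carry this out. The two caveats you raise are genuine, and the paper's one-line argument passes over both of them.

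\begin{itemize}
\item When some $\tau_i<0$, $\bar B(\tau)$ in general carries a point mass at $t=0$, so $S(\bar B(\tau))$ must be read as a weak solution. Your second route does close this caveat: $L^q(0,T;X)$ is dense in the dual of the reflexive space $\W_T^{q'}(X^*,\DD^*)$, and overlapping local branches $\bar S$ agree with $S$ on $L^q$-controls, so by continuity they agree everywhere.
\item Continuity of $\nabla\JJ$ requires $J$ to be continuously Fr\'echet differentiable, whereas Assumption~\ref{assu:obj} only gives Fr\'echet differentiability.
\end{itemize}
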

	
	Given the reduced objective, we reformulate \eqref{eq:ocp} as the following finite dimensional optimization problem
	\begin{equation*}
		\eqref{eq:ocp} 
		\quad \Longleftrightarrow \quad 
		\left\{\quad 
		\begin{aligned}
			\min_{\tau \in \R^n} \quad & \JJ(\tau) \\
			\text{s.t.}\quad & \tau \in \PP,
		\end{aligned}
		\right.
	\end{equation*}
	which we aim to solve by means of the projected gradient method resp.\ proximal gradient method. Starting from 
	an initial value $\tau^0\in \R^n$, we compute the next iterate by 
	\begin{equation}\label{eq:projgrad}
	    \tau^{k+1} = \Pi_{\PP}\Big( \tau^k - \frac{1}{L_k} \, \nabla \JJ(\tau^k) \Big), \quad k = 0, 1, 2, \ldots,
    \end{equation}		
	where $\Pi_\PP$ denotes the projection on the feasible set $\PP$ and $L_k^{-1} > 0$ is a step size computed 
	with the backtracking procedure from \cite[Section~10.3.3]{Bec17}. This leads to the following algorithm:
	
	\begin{algorithm} \ \\[-3ex]
    
	\begin{algorithmic}[1]
		\STATE Choose $\tau_0\in \PP$ and $\gamma \in (0,1)$. Set $k = 0$.
		\REPEAT
		    \STATE Set $g_k := \nabla \JJ(\tau_k)$.
		    \STATE Set $L_0 := 1$ and $m = 0$.
            \WHILE {$\JJ(\tau_k) - \JJ(\Pi_\PP(\tau_k - L_m^{-1} g_k)) 
            < \gamma\,L_m\, \| \tau_k - \Pi_\PP(\tau_k - L_m^{-1} g_k) \|^2$}
                \STATE Set $L_{m+1} := 2\,L_m$ and update $m \leftarrow m + 1$.
            \ENDWHILE
            \STATE Set $L_k := L_m$, $\tau_{k+1} := \Pi_\PP(\tau_k - L_k^{-1} g_k)$ and update $k \leftarrow k + 1$.
		\UNTIL {$\tau_{k} = \tau_{k-1}$}
	\end{algorithmic}
	\label{alg:projgrad}
    \end{algorithm}
    
The practical realization of this algorithm requires an efficient computation of 
the projection onto the feasible set $\PP$, which is the polyhedron from \eqref{eq:feaspoly}.
This is a classical problem in the literature, also known as isotonic regression, for which 
efficient algorithms are available, see, e.g., \cite{GW84}.

\begin{theorem}\label{thm:proxconv}
    Let Assumption~\ref{assu:stateop} hold true.
    Then, for every initial value $\tau_0 \in \PP$, the proximal gradient method from Algorithm~\ref{alg:projgrad} 
    satisfies the following:
    \begin{enumerate}
        \item Feasibility: The algorithm is feasible, i.e., the backtracking procedure in every outer iteration terminates 
        after finitely many iterations.
        \item Descent property: The sequence $\{\JJ(\tau_k)\}_{k\in\N}$ is non-increasing and
        $\JJ(\tau_{k+1}) < \JJ(\tau_k)$, if and only if $\tau_k$ is not a stationary point of $\eqref{eq:ocp}$.
        
        \item Stationarity: Every accumulation point of $\{\tau_k\}_{k\in\N}$ is a stationary point of \eqref{eq:ocp}
        and thus satisfies the optimality system from Theorem~\ref{thm:KKT}.
    \end{enumerate}     
\end{theorem}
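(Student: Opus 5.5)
The plan is to verify the hypotheses of the standard convergence theory for the proximal gradient method with backtracking in the nonconvex setting, \cite[Section~10.3--10.5]{Bec17}, applied to $\min_{\tau\in\R^n} \JJ(\tau) + \iota_\PP(\tau)$. Here $\iota_\PP$ is the indicator of the closed convex polyhedron $\PP$, so its prox is $\Pi_\PP$ and iteration \eqref{eq:projgrad} is exactly the proximal gradient step. By Corollary~\ref{cor:JJdiff} (which rests on Assumption~\ref{assu:stateop}, Corollary~\ref{cor:implfunc} and Theorem~\ref{thm:barBdiff}), $\JJ$ is continuously Fréchet-differentiable on all of $\R^n$, i.e.\ on an open superset of $\PP$. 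This is the point where the modification $\bar B$ of $B$ outside $\PP$ enters. Since $\PP\subset[0,T]^n$ is compact, I would work on a compact convex neighborhood $K\supset\PP$, for instance $K = \PP + \overline{B_{\R^n}(0,1)}$.

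\textbf{Main obstacle: local Lipschitz continuity of $\nabla\JJ$.} The theory in \cite{Bec17} assumes $\nabla\JJ$ is $L$-Lipschitz, but Corollary~\ref{cor:JJdiff} only gives continuity of $\nabla\JJ$. I would first try to bypass Lipschitz continuity entirely, using only uniform continuity of $\nabla\JJ$ on $K$. Set $d_m := \tau_k - \Pi_\PP(\tau_k - L_m^{-1}g_k)$, and let $G_m := L_m d_m$ denote the gradient mapping.

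First, the projection characterization gives $(g_k - L_m d_m, -d_m)\ge 0$, hence $(g_k, d_m) \ge L_m\|d_m\|^2$. Second, by the mean value theorem,
\[
\JJ(\tau_k)-\JJ(\tau_k-d_m) = (\nabla\JJ(\xi),d_m) \ge L_m\|d_m\|^2 - \omega(\|d_m\|)\|d_m\|,
\]
where $\omega$ is the modulus of continuity of $\nabla\JJ$ on $K$. Third, $\|d_m\|\le L_m^{-1}\|g_k\|\to 0$, and $\|G_m\|$ is nondecreasing in $L_m$. So either $d_m=0$, in which case the while condition fails trivially, or $\omega(\|d_m\|)\le (1-\gamma)L_m\|d_m\|$ holds for $m$ large, because $L_m\|d_m\|\ge\|G_0\|>0$. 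This yields feasibility (1). If the continuity-only argument turns out too delicate, the fallback is to assume (or prove, if Assumption~\ref{assu:f} is strengthened to Lipschitz $f'$) a Lipschitz bound on $\nabla\JJ$ over $K$. This can be done via \eqref{eq:gradJ}, the Hölder continuity of $P$, and Lipschitz dependence of $y$, $p$ on $\tau$ through Theorem~\ref{thm:implfunc}.

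\textbf{Descent property (2).} The accepted step gives $\JJ(\tau_k)-\JJ(\tau_{k+1})\ge\gamma L_k\|\tau_k-\tau_{k+1}\|^2\ge0$, so $\{\JJ(\tau_k)\}$ is non-increasing. Moreover, $\tau_{k+1}=\tau_k$ iff $\tau_k=\Pi_\PP(\tau_k - L_k^{-1}\nabla\JJ(\tau_k))$, which is equivalent to the variational inequality $(\nabla\JJ(\tau_k),\tau-\tau_k)\ge0$ for all $\tau\in\PP$, i.e.\ stationarity. This coincides with the optimality system of Theorem~\ref{thm:KKT} by the adjoint computation preceding it. Strict decrease therefore holds exactly at nonstationary points.

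\textbf{Stationarity (3).} $\JJ$ is continuous on the compact set $\PP$, so it is bounded below there. Summing the descent inequality gives $\sum_k L_k\|\tau_k-\tau_{k+1}\|^2<\infty$. Since $L_k\ge1$, we get $\|G_{L_k}(\tau_k)\|^2/L_k\to0$. I then need $L_k$ bounded along a subsequence converging to an accumulation point $\bar\tau$. Under a Lipschitz bound this follows from $L_k\le\max\{1,2L/(2(1-\gamma))\}$-type estimates. Under mere continuity I would argue by contradiction: if $\bar\tau$ is nonstationary, the gradient mapping stays bounded away from zero near $\bar\tau$, and the argument from step (1) gives a uniform bound on $L_k$ there. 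Hence $G_{L_k}(\tau_k)\to0$. By continuity of $\nabla\JJ$ and of $\Pi_\PP$, passing to the limit in $\tau_{k}-\tau_{k+1}=L_k^{-1}G_{L_k}(\tau_k)$ (with $L_k$ converging along a further subsequence) shows that $\bar\tau$ is a fixed point of the projected gradient map. Thus $\bar\tau$ is stationary and satisfies the system of Theorem~\ref{thm:KKT}.
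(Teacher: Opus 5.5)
Your proof is correct, but it takes a different route from the paper. The paper proves nothing by hand here. After rewriting \eqref{eq:ocp} as $\min \JJ + I_\PP$, it only records three facts: $\JJ$ is $C^1$ on $\R^n$ (Corollary~\ref{cor:JJdiff}), $I_\PP$ is lower semicontinuous and continuous on its domain, and $\JJ$ is bounded on $\PP$. It then invokes \cite[Theorem~3.1]{KM22}. That is a convergence result for the monotone proximal gradient method with backtracking which needs only continuous differentiability of the smooth part. It is used precisely to avoid the Lipschitz-gradient hypothesis of \cite{Bec17}, which you correctly flag as unavailable.

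What you do is essentially a self-contained proof of that result in the present special situation. Write $G_L(\tau):=L\bigl(\tau-\Pi_\PP(\tau-L^{-1}\nabla\JJ(\tau))\bigr)$.
\begin{itemize}
\item Compactness and convexity of $\PP$ give boundedness of $\JJ$. They also give uniform continuity of $\nabla\JJ$ on the segments $[\tau_k,\tau_k-d_m]\subset\PP$, so $K=\PP$ already suffices.
\item Combined with $(g_k,d_m)\ge L_m\|d_m\|^2$, $\|d_m\|\le L_m^{-1}\|g_k\|$ and the monotonicity of $L\mapsto\|G_L(\tau)\|$, this gives termination of the backtracking.
\item Your contradiction argument at a nonstationary accumulation point $\bar\tau$ is sound. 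There $\|G_1\|\ge\delta>0$ nearby forces a uniform bound on $L_k$ along the subsequence. Together with $L_k\|\tau_k-\tau_{k+1}\|^2\to0$ this yields $\|G_{L_k}(\tau_k)\|\to0$, which already contradicts $\|G_{L_k}(\tau_k)\|\ge\|G_1(\tau_k)\|\ge\delta$.
\end{itemize}

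The citation buys brevity and covers noncompact settings. Your version makes transparent why mere $C^1$-regularity of $\JJ$ on an open superset of $\PP$ is all that is needed, which is exactly what the modification $\bar B$ provides. It also proves item (2) explicitly, which the paper leaves to the reference.

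Two small remarks. First, the projection inequality reads $(g_k-L_m d_m,d_m)\ge0$; you wrote $-d_m$ in the second slot, although the conclusion you drew is the correct one. Second, drop the Lipschitz ``fallback''. It would require the additional hypotheses of Theorem~\ref{thm:gradlip}, which are not part of this statement, and your continuity-based argument makes it unnecessary.
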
    	

\begin{proof}
    We rewrite the optimization problem in the following form:
	\begin{equation*}
        \eqref{eq:ocp}	
        \quad \Longleftrightarrow \quad
        \min_{\tau\in \R^n} \, \JJ(\tau) + I_{\PP}(\tau),
	\end{equation*}
    where $I_\PP$ denotes the indicator functional of $\PP$, i.e., 
    \begin{equation}\label{eq:indicator}
        I_\PP(\tau) := 
        \begin{cases}
            0 , & \tau \in \PP, \\
            \infty, & \tau \notin \PP.
        \end{cases}
    \end{equation}    	
    Applying the proximal gradient method to this problem exactly leads to the iteration in \eqref{eq:projgrad}.
    Now, $\JJ: \R^n \to \R$ is continuously differentiable by Corollary~\ref{cor:JJdiff} and 
    $I_{\PP}: \R^n \to [0,\infty]$ is lower semicontinous and continuous on $\PP = \dom(I_\PP)$.
    Moreover, $\JJ$ is clearly bounded on $\PP$ by boundedness of $\PP$ and continuity of $\JJ$. 
    Therefore, \cite[Theorem~3.1]{KM22} is applicable, which implies the result. 
	Note that the stationarity of a limit point $\bar\tau$, i.e., 
    \begin{equation*}
        \bar\tau \in \PP, \quad 
        \dual{\nabla \JJ(\bar\tau)}{\tau - \bar\tau} \geq 0\quad \forall\, \tau \in \PP,
    \end{equation*}
	indeed implies the KKT-conditions from Theorem~\ref{thm:KKT}, since the starting point 
	for derivation of the latter is exactly the variational inequality in \eqref{eq:VI}.
\end{proof}

If the gradient of the smooth part of the objective is Lipschitz continuous, one can even establish 
an order of convergence for the residuum in the fixed point equation, see Corollary~\ref{cor:proxconv} below.
The Lipschitz continuity is a hard assumption in our context, but it can be verified under additional assumptions on the data, 
as we will prove next:
	
	\begin{theorem}\label{thm:gradlip}
	    Beside our standing assumptions, suppose that the following conditions holds true: 
	    \begin{enumerate}
	        \item The number $q$ from Assumption~\ref{assu:f} satisfies $q\geq 2$.
	        
	        \item The form functions satisfy $\psi_i \in \DD$, $i = 1, \ldots, n-1$, such that 
	        the space $Z$ from \eqref{eq:defZ} is simply $Z= \DD^*$.
	        
	        \item The derivative of the objective is Lipschitz-continuous on bounded sets
	        in the following sense: For every $M > 0$, 
	        there exists $L^J_M > 0$ such that  
            \begin{equation}\label{eq:Jprimelip}
            \begin{aligned}
                & \| J'(y_1, \tau_1) - J'(y_2, \tau_2)\|_{L(L^q(0,T;\DD)\times \R^n), \R)} \\
                & \qquad\qquad 
                \leq L^J_M\, \big(\| y_1 - y_2\|_{L^q(0,T;\DD)} + \|\tau_1 - \tau_2\|_{\R^n}\big)\\
                & \qquad \forall\, y_i \in W^q_0(\DD,X),\,\tau \in \R^n \; : \; 
                \|y_i\|_{\W^q(\DD,X)} + \|\tau_i\|_{\R^n} \leq M, \; i =1,2 .
            \end{aligned}
            \end{equation}
            
            \item Furthermore, $\partial_y J$ maps bounded sets of $\W^q(\DD,X) \times \R^n$ 
            to bounded sets of $C([0,T];Y_q^*)$. \label{it2} 
            
    	        \item\label{it5} The nonlinearity $f$ maps $Y_q$ to $X$ and        
    	        is globally Lipschitz continuous in these spaces, i.e., 
    	        there is a constant $L_f > 0$ such that 
	        \begin{equation*}
	        \begin{aligned}
                \| f(y_1) - f(y_2)\|_{X}  \leq L_f \, \| y_1 - y_2\|_{Y_q} 
                \quad \forall\,y_1, y_2\in Y_q.
	        \end{aligned}
	        \end{equation*}
	        
	        \item\label{it6} The derivative of $f$ is Lipschitz-continuous in the following sense: 
	        There exist a constant $K_f > 0$ and an exponent $s \in [0, q-2]$ such that 
	        \begin{equation}\label{eq:fprimelip}
	            \| f'(y_1) - f'(y_2) \|_{L(\DD;Y_q)} 
	            \leq K_f \big( \|y_1\|_\DD^s + \|y_2\|_\DD^s\big) \, \|y_1 - y_2 \|_\DD
	            \quad \forall\, y_1, y_2 \in \DD.
	        \end{equation}
	        
	        \item\label{it7} Moreover, $f'$ maps $Y_q$ to $L(Y_q, Y_q)$ and is continuous in these spaces.
	    \end{enumerate}
	    Then, for every $\tau\in \R^n$, 
	    the state equation \eqref{eq:stateeq} admits a unique solution in $\W^q_0(\DD,X)$ and 
	    the mapping $\R^n \ni \tau \mapsto \nabla \JJ(\tau) \in \R^n$ is Lipschitz continuous on bounded sets, i.e., 
	    for every $M > 0$, there exists $L_M > 0$ such that 
	    \begin{equation*}
	        \|\nabla\JJ(\tau_1) - \nabla\JJ(\tau_2)\|_{\R^n} \leq L_M \,\|\tau_1 - \tau_2 \|_{\R^n} \quad 
	        \forall\, \tau_1, \tau_2 \in \overline{B_{\R^n}(0,M)}.
	    \end{equation*}
	\end{theorem}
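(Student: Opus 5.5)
The plan has two parts. First, unique solvability of the state equation in $\W^q_0(\DD,X)$ for every $\tau\in\R^n$. Second, Lipschitz continuity of $\nabla\JJ$ on bounded sets, obtained by chaining Lipschitz estimates along $\tau\mapsto \bar B(\tau)\mapsto y\mapsto p\mapsto \nabla\JJ(\tau)$ and reading the gradient off formula \eqref{eq:gradJ}.

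\emph{Step 1 (state equation).} For $\tau\in\R^n$ the control $\bar B(\tau)$ is the function $\sum_i\bar\chi_{[\tau_i,\tau_{i+1})}\psi_i$. Since $\psi_i\in\DD$, it lies in $L^\infty(0,T;X)\subset L^q(0,T;X)$ with norm bounded uniformly for $\tau$ in bounded sets. I would solve $y'+Ay=u-f(y)$ by Banach's fixed point theorem in $C([0,t];Y_q)$:
\begin{itemize}
\item use the solution operator of $(\partial_t+A,\gamma_0)$ from Lemma~\ref{lem:maxparreg} together with the embedding $\W^q\embed C([0,T];Y_q)$;
\item use the global Lipschitz bound of $f:Y_q\to X$ from item~\ref{it5}.
\end{itemize}
As in Lemma~\ref{lem:adjointeq}, with the $t$-independent norm bound of Lemma~\ref{lem:normGt}, the map is a contraction for $t$ small, and this $t$ depends only on $L_f$. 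Concatenation then gives a unique strong (hence weak) solution on $[0,T]$. A Gronwall-type argument, again via the small-interval estimates, yields the a priori bound $\|y\|_{\W^q(\DD,X)}\le C(1+\|u\|_{L^q(0,T;X)})$. Thus $y$ stays bounded in $\W^q$ for $\tau$ in bounded sets, which is what is needed to use \eqref{eq:Jprimelip} and item~\ref{it2} with a fixed $M$.

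\emph{Step 2 (Lipschitz dependence of $y$ and $p$).} The difference $\bar B(\tau_1)-\bar B(\tau_2)$ is only $|\tau_1-\tau_2|^{1/q}$-small in $L^q(0,T;X)$. I would therefore measure it in $[\W_T^{q'}(X^*,\DD^*)]^*$, where Theorem~\ref{thm:barBdiff} gives a derivative bounded uniformly on bounded sets, hence a Lipschitz estimate. The difference $\delta y=y_1-y_2$ solves the weak linear equation \eqref{eq:lineqweak} with coefficient $\int_0^1 f'(y_2+s\delta y)\,\d s$ and right-hand side $\bar B(\tau_1)-\bar B(\tau_2)$. Its solution operator is the adjoint of the inverse from Lemma~\ref{lem:adjointeq}. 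Tracking the contraction constants there, which depend only on $\|f'(\cdot)^*\|_{L^r(L(Y_q^*,\DD^*))}$ and hence only on $M$, gives
\[
\|y_1-y_2\|_{L^q(0,T;\DD)}\le C_M|\tau_1-\tau_2|.
\]
The adjoint difference $\delta p=p_1-p_2$ solves the adjoint equation with right-hand side
\[
\partial_yJ(y_1,\tau_1)-\partial_yJ(y_2,\tau_2)+(f'(y_2)^*-f'(y_1)^*)p_2 .
\]
\begin{itemize}
\item The first part is controlled by \eqref{eq:Jprimelip}.
\item For the second part, use $p_2\in C([0,T];Y_q^*)$ bounded (by item~\ref{it2} and Lemma~\ref{lem:adjointeq}) and \eqref{eq:fprimelip}. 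Hölder's inequality bounds its $L^{q'}(0,T;\DD^*)$-norm by $C(\|y_1\|^s_{L^q(\DD)}+\|y_2\|^s_{L^q(\DD)})\|\delta y\|_{L^q(\DD)}$, since $(s+1)q'\le q$ precisely because $s\le q-2$.
\end{itemize}
Hence $\|p_1-p_2\|_{\W_T^{q'}}\le C_M|\tau_1-\tau_2|$, and in particular $\sup_t\|p_1(t)-p_2(t)\|_{Z}\le C_M|\tau_1-\tau_2|$ with $Z=\DD^*$.

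\emph{Step 3 (the gradient), and the main obstacle.} By \eqref{eq:gradJ} one must estimate terms of the form $\dual{\psi}{P_1(\tau_{1,i})}-\dual{\psi}{P_2(\tau_{2,i})}$. I split such a term into
\[
\dual{\psi}{P_1(\tau_{1,i})-P_2(\tau_{1,i})}+\dual{\psi}{P_2(\tau_{1,i})-P_2(\tau_{2,i})},
\]
and handle $\nabla_\tau J$ by \eqref{eq:Jprimelip}. The first term is covered by Step~2. The obstacle is the second term: from Step~2 one only gets Hölder continuity of $p_2$ in time, whereas Lipschitz continuity of $t\mapsto\dual{\psi_i}{p_2(t)}$ is needed. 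My approach is to exploit $\psi_i\in\DD$ and $q\ge2$, computing
\[
\tfrac{\d}{\d t}\dual{\psi_i}{p_2}=\dual{A\psi_i}{p_2}+\dual{f'(y_2)\psi_i}{p_2}-\dual{\psi_i}{\partial_yJ(y_2,\tau_2)}
\]
and showing this is in $L^\infty(0,T)$ with bounds depending only on $M$.
\begin{itemize}
\item $\partial_yJ(y_2,\tau_2)\in C([0,T];Y_q^*)$ by item~\ref{it2}.
\item Since $y_2\in C([0,T];Y_q)$ and $\psi_i\in\DD\subset Y_q$, item~\ref{it7} makes $f'(y_2)\psi_i$ continuous in $Y_q$, so it pairs boundedly with $p_2\in C([0,T];Y_q^*)$.
\item The delicate term is $\dual{A\psi_i}{p_2}$, which requires $p_2$ bounded in time with values in a space pairing with $X$. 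For this I would first place the adjoint right-hand side in $L^\infty(0,T;Y_q^*)$ and then bootstrap the adjoint regularity via maximal regularity at a different exponent, or via interpolation, so that $p_2\in L^\infty(0,T;X^*)$.
\end{itemize}
If this bootstrap works, the second term is bounded by $C_M|\tau_{1,i}-\tau_{2,i}|$, and summing over $i$ gives the asserted Lipschitz constant $L_M$.
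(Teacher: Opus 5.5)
Your overall architecture is the paper's, nearly step for step: the contraction for the state in $C([0,t];Y_q)$ using the uniform bound of Lemma~\ref{lem:normGt}, Lipschitz dependence of $y$ on $\tau$ measured in $[\W_T^{q'}(X^*,\DD^*)]^*$ through the inverse adjoint operator, the adjoint-difference estimate with H\"older's inequality under $s\le q-2$, and the splitting of $P_1(\tau^1_j)-P_2(\tau^2_j)$.

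\textbf{The gap.} The gap is precisely the step you flag as the main obstacle, and you leave it conditional (``if this bootstrap works''). The mechanism you suggest for it does not work as stated. Maximal parabolic $L^s$-regularity for finite $s$ only yields $p_2\in L^s(0,T;X^*)$ and $p_2\in C([0,T];(\DD^*,X^*)_{1-1/s,s})$, never $p_2\in L^\infty(0,T;X^*)$. Maximal $L^\infty$-regularity is not available for the unbounded operator $A^*$ on the reflexive space $\DD^*$. Without a uniform-in-time bound on $\dual{A\psi_i}{p_2(t)}_{X,X^*}$ you only have the H\"older continuity of exponent $1/q$ of $p_2$ in $\DD^*$, coming from $p_2'\in L^{q'}(0,T;\DD^*)$. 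That gives $|\tau_1-\tau_2|^{1/q}$ for the second term, not a Lipschitz bound, so the theorem is not proved.

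\textbf{The missing idea.} This is the paper's Lemma~\ref{lem:adjlip}. It rests on two facts:
\begin{enumerate}
\item The full adjoint right-hand side $g:=\partial_yJ(y_2,\tau_2)-f'(y_2)^*p_2$ lies in $C([0,T];Y_q^*)$. This uses \eqref{it2}, \eqref{it7}, and $p_2\in C([0,T];Y_q^*)$.
\item $Y_q^*=(\DD^*,X^*)_{1/q,q'}$ has strictly positive smoothness relative to $\DD^*$, namely $Y_q^*\embed D((A^*)^\theta)$ for $0<\theta<1/q$.
\end{enumerate}
Since $-A^*$ generates an analytic semigroup on $\DD^*$ (a consequence of maximal regularity), you can write $p_2(t)=\int_t^T e^{-(s-t)A^*}g(s)\,\d s$. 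The smoothing estimate $\|A^*e^{-(s-t)A^*}g(s)\|_{\DD^*}\le C(s-t)^{\theta-1}\|g(s)\|_{Y_q^*}$ is integrable in $s$. It gives $\sup_t\|A^*p_2(t)\|_{\DD^*}\le C\,T^\theta\theta^{-1}\|g\|_{C([0,T];Y_q^*)}$, i.e.\ $p_2\in L^\infty(0,T;X^*)$. Hence $p_2'=A^*p_2+f'(y_2)^*p_2-\partial_yJ(y_2,\tau_2)\in L^\infty(0,T;\DD^*)$ directly, so $p_2$ is Lipschitz into $\DD^*=Z$. Your scalar computation with $\psi_i$ then becomes unnecessary. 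A Da Prato--Grisvard optimal-regularity result in the spaces $(\DD^*,X^*)_{\theta,\infty}$ would be an alternative way to make your ``interpolation'' remark rigorous.

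\textbf{A smaller point.} For the constant to be uniform in $\tau_2\in\overline{B_{\R^n}(0,M)}$, you also need $\sup_t\|f'(y_2(t))\|_{L(Y_q,Y_q)}$ bounded uniformly. Mere continuity of $f'$ on $Y_q$ does not give this on bounded sets. It does follow by compactness, since $\tau\mapsto y_\tau\in C([0,T];Y_q)$ is continuous and $\overline{B_{\R^n}(0,M)}\times[0,T]$ is compact.
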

	
\begin{proof}
    We split the rather lengthy proof into several steps beginning with the existence of the state equation, 
    which is a straight forward consequence of Banach's contraction principle, similarly to the proof of 
    Lemma~\ref{lem:adjointeq}. Nonetheless, let us present the proof in detail for convenience of the reader 
    and for later reference. 

    \vspace*{1ex}
    \emph{Step 1: Existence of solutions of the state equation}\\
    Let $u \in L^q(0,T;X)$ be fixed but arbitrary. We follow the lines of the proof of Lemma~\ref{lem:adjointeq}.
    For this purpose, let $t \in (0,T]$ be arbitrary and consider the auxiliary equation 
    \begin{equation}\label{eq:stateeqfoo}
        y' + A y + f(y) = u \quad \text{in } (0,t), \quad y(0) = y_0.
    \end{equation}
    with $y_0 \in Y_q$ given. Let us denote the solution operator associated  with the linear part by
    \begin{equation}\label{eq:defG2}
	    G_{t} := (\partial_t + A, \gamma_0)^{-1} :  L^{q}(0,t;X) \times Y_q \to \W^{q}(0,t; \DD, X)
	\end{equation}
	with $\W^{q}(0, t; \DD, X) := W^{1,q}(0,t;X) \cap L^q(0,b;\DD)$.
    Note that $G_t$ is well defined due to the maximal parabolic regularity of $A$. 
    Due to $\W^{q}(0,t; \DD, X) \embed C([0,t];Y_q)$, we can consider $G_{t}$ as an  operator with range in 
    $C([0,t];Y_q)$ and we abbreviate the associated operator norm by $\|G_{t}\|$. 
    Similar to the proof of Lemma~\ref{lem:adjointeq}, we introduce the fixed point mapping
	\begin{equation*}
	    \Psi_{t; u, y_0}: C([0,t]; Y_q) \ni y \mapsto G_{t}(u - f(y), y_0) \in C([0,t]; Y_q)
	\end{equation*}
	and prove its contractivity on small time intervals. Indeed, for arbitrary $y_1, y_2 \in C([0,t]; Y_q)$, 
	by using the assumed Lipschitz continuity of $f$, an estimate similar to \eqref{eq:Gest} is obtained:
	\begin{equation}
	\begin{aligned}
        \|\Psi_{t;u, y_0}(y_1) - \Psi_{t; u, y_0}(y_2)\|_{C([0,t]; Y_q)} 
        &  \leq \|G_{t}\| \, \| f(y_1) - f(y_2)\|_{L^{q}(0,t;X)}\\
        & \leq \sup_{t\in [0,T]} \|G_t\| \,L_f \, t^{1/q} \, \|y_1 - y_2\|_{C([0,t]; Y_q)} \\
        & \leq C_T\, L_f \, t^{1/q} \, \|y_1 - y_2\|_{C([0,t]; Y_q)}
    	\end{aligned}
	\end{equation}
    where we applied Lemma~\ref{lem:normGt} from the appendix for the last inequality.
    This shows that $\Psi_{t;u, y_0}$ is indeed contractive for $t > 0$ sufficiently small. A standard concatenation argument 
    implies the existence of a solution on the whole time interval $[0,T]$. Moreover, due to
    \begin{equation*}
    \begin{aligned}
        & \|y\|_{\W^q(\DD, X)} \\
        & \quad \leq \|G_T\|_{L(L^{q}(0,t;X) \times Y_q, \W^{q}(\DD, X))}\, 
        \big( \|u\|_{L^q(0,T;X)} + \|f(y)\|_{L^q(0,T;X)} + \|y_0\|_{Y_q}\big) \\
        & \quad \leq C ( 1 + L_f \,\|y\|_{L^q(0,T;Y_q)} + \|f(0)\|_{L^q(0,T;X)}\big),
    \end{aligned}
    \end{equation*}     
    we find that the solution is an element in $\W^{q}(\DD, X)$. 
    
    Next, we prove that the solution operator associated with the state equation is 
    globally Lipschitz continuous when considered as a mapping from $L^q(0,T;X)$ to $\W^q(\DD, X)$.    
    For this purpose, let $u_1, u_1 \in L^q(0,T;X)$ 
    and $y_0^1, y_0^2 \in Y_q$ be given and define $\bar t := (2 C_T L_f)^{-q}$.
    Then the solutions $y_1, y_2 \in \W^q(0,\bar t;\DD, X)$ to \eqref{eq:stateeqfoo} 
    with $t = \bar t$ and $u = u_i$, $i=1,2$, satisfy
    \begin{equation*}
    \begin{aligned}
        \| y_1 - y_2 \|_{C([0,\bar t]; Y_q)}
        & \leq \|\Psi_{\bar t; u_1, y_0^1}(y_1) - \Psi_{\bar t; u_2, y_0^2}(y_1) \|_{C([0,\bar t]; Y_q)} \\
        & \quad + \|\Psi_{\bar t; u_2, y_0^2}(y_1) - \Psi_{\bar t; u_2, y_0^2}(y_2) \|_{C([0,\bar t]; Y_q)} \\
        & \leq \|G_{\bar t}\| \big( \|u_1 - u_2\|_{L^q(0,\bar t; X)} + \|y_0^1 - y_0^2\|_{Y_q}\big) \\
        & \quad + \|G_{\bar t}\| \,L_f \, \bar t^{1/q} \, \|y_1 - y_2 \|_{C([0,\bar t]; Y_q)}.
    \end{aligned}
    \end{equation*}        
    and Lemma~\ref{lem:normGt} and the definition of $\bar t$ imply
    \begin{equation}\label{eq:Slipbart}
        \| y_1 - y_2 \|_{C([0,\bar t]; Y_q)} \leq 2\,C_T \big( \|u_1 - u_2\|_{L^q(0,\bar t; X)} + \|y_0^1 - y_0^2\|_{Y_q}\big) .
    \end{equation}        
    Consider now \eqref{eq:stateeqfoo} with $t = T$ and $u = u_i$, $i=1,2$, (but the same initial value $y_0$)
    and denote the solutions 
    again by $y_i$, $i=1,2$. Let moreover $t\in [0,T]$ be arbitrary and assume w.l.o.g., that $T = N \bar t$ 
    with some $N \in \N$. Then there is an $n\in \N$ such that $t\in [n\bar t, (n+1)\bar t]$ and 
    a recursive application of \eqref{eq:Slipbart} gives
    \begin{equation*}
    \begin{aligned}
        \|y_1(t) - y_2(t)\|_{Y_q} 
        & \leq \| y_1 - y_2 \|_{C([n\bar t, (n+1)\bar t]; Y_q)} \\
        & \leq 2 \, C_T \big( \|u_1 - u_2\|_{L^q(n\bar t, (n+1)\bar t; X)} + \| y_1(n\bar t) - y_2(n\bar t)\|_{Y_q}\big) \\
        & \leq 2 \, C_T \big( \|u_1 - u_2\|_{L^q(n\bar t, (n+1)\bar t; X)} + \| y_1 - y_2\|_{{C([(n-1)\bar t, n\bar t]; Y_q)}}\big) \\
        & \leq \ldots 
        \begin{aligned}[t]
            & \leq \sum_{k=0}^{n} (2\,C_T)^{n-k+1} \|u_1 - u_2\|_{L^q(k\bar t, (k+1)\bar t; X)} \\[-1.5ex]
            & \leq (2\,C_T)^{N+1}  \sum_{k=0}^{N} \|u_1 - u_2\|_{L^q(k\bar t, (k+1)\bar t; X)} \\
            & \leq \tilde L_S\, \|u_1 - u_2\|_{L^q(0, T; X)} 
        \end{aligned}                
    \end{aligned}     
    \end{equation*}
    with a constant $\tilde L_S > 0$ independent of $u_1, u_2$.
    This implies the global Lipschitz continuity of the solution operator of \eqref{eq:stateeqfoo} on $(0,T)$ as a
    mapping from $L^q(0,T;X)$ to $C([0,T];Y_q)$. With a slight abuse of notation, we denote this mapping 
    by $S : u \mapsto y$, too. Using a simple boot strapping argument, one shows that $S$ is even globally Lipschitz 
    continuous, when considered with range in the maximal parabolic regularity space, since
    \begin{equation}\label{eq:Slipmaxparreg}
    \begin{aligned}
        \|y_1 - y_2\|_{\W^q(\DD,X)} 
        & \leq \|G_T\| \big( \|f(y_1) - f(y_2)\|_{L^q(0,T;X)} + \|u_1 - u_2\|_{L^q(0,T;X)}\big) \\
        & \leq C_T \big( L\,\|y_1 - y_2\|_{L^q(0,T;Y_q)}  + \|u_1 - u_2\|_{L^q(0,T;X)} \big) \\
        & \leq C_T \big( L\, \tilde L_S + 1\big) \|u_1 - u_2\|_{L^q(0,T;X)} \\
        & =: L_S\,  \|u_1 - u_2\|_{L^q(0,T;X)} .
    \end{aligned}
    \end{equation}

    \vspace*{1ex}
    \emph{Step 2: Boundedness of the adjoint solution operator}\\
    Let $t\in (0,T)$, $g\in L^{q'}(0,T;\DD^*)$, $p_t \in X^*$, and 
    \begin{equation}\label{eq:boundy}
        y \in \overline{B_{L^q(0,T;\DD)}(0, \varrho)}
    \end{equation}        
    be fixed but arbitrary. We first consider an auxiliary adjoint equation given by
    \begin{equation}\label{eq:adjeq2}
	    -p' + A^* p + f'(y)^* p = g, \quad p(t) = p_t.
    \end{equation}
    By Lemma~\ref{lem:adjointeq}, this equation
    admits a unique solution $p \in \W^{q'}(0,t;X^*, \DD^*)$.
    To show the boundedness of the associated solution operator, 
    let us return to the notion of the proof of Lemma~\ref{lem:adjointeq} and introduce the fixed point mapping 
    \begin{equation*}
	    \Phi_{t;g, p_t}: C([0,t]; Y_q^*) \ni p \mapsto K_t(g - f'(y)^*p, p_t) \in C([0,T]; Y_q^*),
    \end{equation*}
    where $K_t$ is as defined in \eqref{eq:defG}. From \eqref{eq:fprimestargrowth} and Lemma~\ref{lem:normGt}, 
    it follows that
    \begin{equation*}
    \begin{aligned}
         \|p_1 - p_2\|_{C([0,t];Y_q^*)}
        & = \|\Phi_{t; g_1, p_t^1}(p_1) - \Phi_{t; g_1, p_t^1}(p_1)\|_{C([0,t];Y_q^*)} \\
        &  \leq \|K_t\| \|f'(y)^*\|_{L^r(0,T; L(Y_q^*, \DD^*))} \, t^{\frac{r - q'}{r q'}} \|p_1 - p_2\|_{C([0,t];Y_q^*)} \\
        & \quad + \|K_t\| \|g_1 - g_2\|_{L^{q'}(0,t;\DD^*) + \|p_t^1 -p_t^2\|_{Y_q^*}} \\
        & \leq C_T 
        \begin{aligned}[t]
            \Big[ & \big( d_1 \, T^{1/r} + d_2 \, \|y\|_{L^q(0,T;\DD)}^{q/r}\big)\, t^{\frac{r - q'}{r q'}} \|p_1 - p_2\|_{C([0,t];Y_q^*)}\\
            & +  \|g_1 - g_2\|_{L^{q'}(0,t;\DD^*)} + \|p_t^1 -p_t^2\|_{Y_q^*}\Big] .
        \end{aligned}
    \end{aligned}
    \end{equation*}
    Therefore, if we choose $\bar t := (2C^T(d_1 \, T^{1/r} + d_2 \, \varrho^{q/r}))^{-rq'/(r-q')}$, then, 
    analogously to \eqref{eq:Slipbart}, 
    \begin{equation*}
        \|p_1 - p_2\|_{C([0,\bar t];Y_q^*)}
        \leq 2\,C_T \big( \|g_1 - g_2\|_{L^{q'}(0,t;\DD^*)} + \|p_t^1 -p_t^2\|_{Y_q^*}\big)
    \end{equation*}
    is obtained. Thus we can argue as in case of the state equation to obtain the existence of a constant $L_{P} > 0$ 
    such that
    \begin{equation*}
        \| p_1 - p_2 \|_{C([0,T];Y_q^*)} \leq L_P\, \|g_1 - g_2\|_{L^{q'}(0,T;\DD^*)}.
    \end{equation*}
    Since $r > q'$ by Assumption~\ref{assu:f}, we can apply a boot strapping argument to obtain the same result in 
    the maximal parabolic regularity space:
    \begin{equation*}
    \begin{aligned}
        & \|p_1 - p_2\|_{\W^{q'}(X^*;\DD^*)} \\
        & \quad \leq \|K_T\|_{L(L^{q'}(0,T;\DD^*),\W^{q'}(X^*;\DD^*))} 
        \| g_1 - g_2 - f'(y)^* (p_1 - p_2)\|_{L^{q'}(0,T;\DD^*)} \\
        & \quad \leq C_T \Big( \|g_1 - g_2\|_{L^{q'}(0,T;\DD^*)}  
        + C\,\| f'(y)^*\|_{L^{r}(0,T;L(Y_q^*, \DD^*))} \|p_1 - p_2\|_{C([0,T];Y_q^*)}  \Big) \\
        & \quad \leq C \big(1 + C(d_1 \,T^{1/q'} + d_2\,\varrho^{q/r}) L_P \big) \|g_1 - g_2\|_{L^{q'}(0,T;\DD^*)} \\
        & \quad =: C^*_\varrho\, \|g_1 - g_2\|_{L^{q'}(0,T;\DD^*)} .
    \end{aligned}
    \end{equation*}
    Since the adjoint equation is linear, this implies the boundedness of its solution operator 
    with the same constant, i.e.,
    \begin{equation}\label{eq:boundadj}
        \| (- \partial_t + A^* + f'(y)^*)^{-1}\|_{L(L^{q'}(0,T;\DD^*),\W^{q'}(X^*;\DD^*))} \leq C^*_\varrho.
    \end{equation}
  
      \vspace*{1ex}      
    \emph{Step 3: Lipschitz continuity of the state solution operator in weaker spaces}\\
    In the first step of the proof, we have already seen that the solution operator $S$ of the state equation is 
    globally Lipschitz continuous as mapping from $L^q(0,T;X)$ to $\W^q(\DD,X)$. However, since the 
    switching-point-to-control operator fails to be Lipschitz continuous as a mapping with range in $L^q(0,T;X)$, 
    as we will see below, we need the Lipschitz continuity of $S$ in weaker spaces. For this purpose, 
    let $u_1, u_2 \in L^q(0,T;X)$ be arbitrary with 
    \begin{equation*}
        \| u_i \|_{L^q(0,T;X)} \leq R, \quad i = 1,2.
    \end{equation*}
    By \eqref{eq:Slipmaxparreg}, we know that there is a constant $\varrho_R > 0$ such that 
    \begin{equation}\label{eq:boundy2}
        \|S(u)\|_{\W^q(\DD, X)} \leq \varrho_R \quad \forall \, u \in \overline{B_{L^q(0,T;X)}(0,R)}.
    \end{equation}
    From Corollary~\ref{cor:implfunc} it follows that    
    \begin{equation*}
    \begin{aligned}
        \|S(u_1) - S(u_2) \|_{L^q(0,T;\DD)}
        \leq \sup_{t\in (0,1)} \|S'(u_1+t\,(u_2 - u_1))(u_2 - u_2)\|_{L^q(0,T;\DD)}.
    \end{aligned}
    \end{equation*}
    Moreover, Corollary~\ref{cor:implfunc} tells us that, for every $t\in (0,1)$, the derivative of $S$ 
    at $u_1+t\,(u_2 - u_1)$ can be extended to an operator with domain in $[\W_T^{q'}(X^*, \DD^*)]^*$
    and thus, with the notation of that corollary, we obtain
    \begin{equation*}
    \begin{aligned}
        & \|S'(u_1+t\,(u_2 - u_1))(u_2 - u_2)\|_{L^q(0,T;\DD)}\\
        & \quad = \|\bar S'(u_1+t\,(u_2 - u_1))(u_2 - u_2)\|_{L^q(0,T;\DD)} \\
        & \quad \leq \|\bar S'(u_1+t\,(u_2 - u_1))\|_{L([\W_T^{q'}(X^*, \DD^*)]^*, L^{q}(0,T;\DD))} 
         \| u_2 - u_1 \|_{[\W_T^{q'}(X^*, \DD^*)]^*} .
    \end{aligned}
    \end{equation*}  
    For the operator norm of the derivative, \eqref{eq:boundadj} along with \eqref{eq:boundy2} implies
    \begin{equation*}
    \begin{aligned}
        & \|\bar S'(u_1+t\,(u_2 - u_1))\|_{L([\W_T^{q'}(X^*, \DD^*)]^*, L^{q}(0,T;\DD))} \\
        & \quad = \|\bar S'(u_1+t\,(u_2 - u_1))^*\|_{L(L^{q'}(0,T;\DD^*),\W^{q'}_T(X^*;\DD^*))} \\
        & \quad = \| (- \partial_t + A^* + f'(S(u+t\,h))^*)^{-1}\|_{L(L^{q'}(0,T;\DD^*),\W^{q'}_T(X^*;\DD^*))} 
        \leq C^*_{\varrho_R}
    \end{aligned}
    \end{equation*}
    and thus 
    \begin{equation}\label{eq:Sweaklip}
        \|S(u_1) - S(u_2) \|_{L^q(0,T;\DD)}
        \leq C^*_{\varrho_R} \,\|u_2 - u_1 \|_{[\W_T^{q'}(X^*, \DD^*)]^*}  
    \end{equation}
    for all $u_1, u_2 \in \overline{B_{L^q(0,T;X)}(0,R)}$.
    
     \vspace*{1ex}
\emph{Step 4: H\"older and Lipschitz continuity of the switching-point-to-control operator}\\
    Consider the switching-point-to-control operator as a mapping with 
    range in $L^q(0,T;X)$, for simplicity also denoted by $\bar B$, i.e.,
    \begin{equation*}
        \bar B: \R^n \ni \tau \mapsto u := \sum_{i=1}^{n-1}\bar\chi_{[\tau_i, \tau_{i+1})}(t) \, \psi_i \in L^q(0,T;X).
    \end{equation*}
    Thanks to $Z^* = (\DD, X)_{\vartheta, \infty}\embed X$, we deduce from \eqref{eq:barchieq} along with 
    H\"older's inequality and the triangle inequality that
    \begin{equation*}
    \begin{aligned}
        & \|\bar B(\tau_1) - \bar B(\tau_2)\|_{L^q(0,T;X)}\\
        & \quad \leq \Big(\int_0^T \Big(\sum_{i=1}^{n-1} 
        \big| \bar\chi_{[\tau_i^1, \tau_{i+1}^1)}(t) - \bar\chi_{[\tau_i^2, \tau_{i+1}^2)}(t)\big| \|\psi_i\|_{X}\Big)^q\,\d t\Big)^{\frac{1}{q}} \\
        & \quad \leq \Big(\sum_{i=1}^{n-1}\|\psi_i\|_{X}^{q'}\Big)^{\frac{1}{q'}}
        \Big(\int_0^T 
        \sum_{i=1}^{n-1}\big| \bar\chi_{[\tau_{i+1}^2, \tau_{i+1}^1)}(t) - \bar\chi_{[\tau_i^2, \tau_i^1)}(t)\big|^q\,\d t\Big)^{\frac{1}{q}} \\
        & \quad \leq \Big(\sum_{i=1}^{n-1}\|\psi_i\|_{X}^{q'}\Big)^{\frac{1}{q'}}
        \sum_{i=1}^{n-1}  \big( \| \bar\chi_{[\tau_{i+1}^2, \tau_{i+1}^1)} \|_{L^q(0,T)}
        + \|  \bar\chi_{[\tau_i^2, \tau_i^1)}\|_{L^q(0,T)} \big) \\
        & \quad \leq C \Big(\sum_{i=1}^{n-1}\|\psi_i\|_{Z^*}^{q'}\Big)^{\frac{1}{q'}}
        \sum_{i=1}^{n-1}  \Big(|\tau^1_i - \tau^2_i|^{\frac{1}{q}} + |\tau^1_{i+1} - \tau^2_{i+1}|^{\frac{1}{q}}\Big)
    \end{aligned}
    \end{equation*}
    and consequently, $\bar B$ is globally H\"older continuous with H\"older exponent $1/q$.
    Note that this also illustrates the lack of differentiability of $\bar B$ with values in $L^q(0,T;X)$. 
    In its original spaces, i.e., as the operator defined in \eqref{eq:Bweak2}, $\bar B$ is 
    continuously differentiable as seen in Theorem~\ref{thm:barBdiff} and it is globally Lipschitz continuous 
    in these spaces, too, since the form of the derivative in \eqref{eq:barBabl} implies
    \begin{equation*}
    \begin{aligned}
        & \|\bar B'(\tau)\|_{L(\R^n;[C^{0,\alpha}(0,T;Z))]^*)} \\
        &\quad \leq \sup_{\stackrel{\|h\|_{\R^n} \leq 1}{\|v\|_{C^{0,\alpha}(0,T;Z))} \leq 1}} 
        \sum_{i=1}^{n-1}  \|\psi_i\|_{Z^*} \Big( |h_{i+1}| \|v(\Pi_{[0,T]}(\tau_{i+1}))\|_Z + |h_{i}| \|v(\Pi_{[0,T]}(\tau_{i}))\|_Z \Big) \\
        &\quad \leq 2 \Big( \sum_{i=1}^{n-1} \|\psi_i\|_{Z^*}^2 \Big)^{1/2},
    \end{aligned}
    \end{equation*}
    where $\Pi_{[0,T]}(t) := \max\{ 0; \min\{T; t\}\}$.

    \vspace*{1ex}
    \emph{Step 5: Lipschitz continuity of the adjoint solution operator}\\
    Let $y_1, y_2 \in \overline{B_{L^q(0,T;\DD)}(0, \varrho)}$ and $g_1, g_2 \in L^{q'}(0,T;\DD^*)$ be given and 
    consider the equations
    \begin{equation*}
	    -p_i' + A^* p_i + f'(y_i)^* p_i = g_i, \quad p_i(T) = 0, \quad i=1,2
    \end{equation*}
    so that the difference solves
    \begin{multline*}
        - (p_1 - p_1)' + A^*(p_1 - p_2) + f'(y_1)^*(p_1 - p_2)  \\[-1ex]
        = g_1 - g_2 + \big( f'(y_2)^* - f'(y_1)^* \big) p_2.
    \end{multline*}
    Hence, \eqref{eq:boundadj} implies
    \begin{equation*}
    \begin{aligned}
        \|p_1 - p_2\|_{\W^{q'}(X^*;\DD^*)}  
        & \leq C^*_{\varrho} \,
        \begin{aligned}[t]
            \big( & \| g_1 - g_2 \|_{L^{q'}(0,T;\DD^*)} \\
            &+ \|f'(y_2)^* - f'(y_1)^*\|_{L^{q'}(0,T; L(Y_q^*,\DD^*))} \|p_2\|_{C([0,T];Y_q^*)} \big) 
        \end{aligned}\\
        & \leq C^*_\varrho
        \begin{aligned}[t]
            \big( & \| g_1 - g_2 \|_{L^{q'}(0,T;\DD^*)} \\
            &+ C^*_\varrho \, \|f'(y_2) - f'(y_1)\|_{L^{q'}(0,T; L(\DD, Y_q))} \|g_2\|_{L^{q'}(0,T;\DD^*)} \big) .         
        \end{aligned} 
    \end{aligned}
    \end{equation*}
    Together with the assumption in \eqref{eq:fprimelip}, 
    H\"older's inequality with $\frac{q'}{q} + \frac{q - q'}{q} = 1$, keeping in mind that $s\,q' \,\frac{q}{q - q'} = s\,\frac{q}{q-2} \leq q$, gives
    for the right hand side
    \begin{equation*}
    \begin{aligned}
        & \|f'(y_2) - f'(y_1)\|_{L^{q'}(0,T; L(\DD, Y_q))} \\
        & \qquad \leq K_f \Big( \int_0^T (\|y_1\|_\DD^s +  \|y_2\|_\DD^s)^{q'} \|y_1 - y_2\|_\DD^{q'}\,\d t \Big)^{1/q'} \\        
        & \qquad \leq C\big( \|y_1\|_{L^q(0,T;\DD)}^s + \|y_1\|_{L^q(0,T;\DD)}^s \big) \|y_1 - y_2\|_{L^q(0,T;\DD)} \\
        & \qquad \leq 2 \, C\, \varrho^s\,\|y_1 - y_2\|_{L^q(0,T;\DD)}
    \end{aligned}        
    \end{equation*}
    and consequently, 
    \begin{equation}\label{eq:plip}
    \begin{aligned}
        & \|p_1 - p_2\|_{\W^{q'}(X^*;\DD^*)}  \\
        & \quad \leq L_{P, \varrho} \big( 
         \| g_1 - g_2 \|_{L^{q'}(0,T;\DD^*)} + \|y_1 - y_2\|_{L^q(0,T;\DD)}\, \|g_2\|_{L^{q'}(0,T;\DD^*)} \big)
    \end{aligned}
    \end{equation}
    with a Lipschitz constant $L_{P,\varrho} > 0$ depending on $\varrho$. 

    \vspace*{1ex}
    \emph{Step 6: Lipschitz continuity of the gradient of the objective}\\
    Let $\tau_1, \tau_2 \in \overline{B_{\R^n}(0,M)}$ and $j \in \{1,\ldots, n\}$ be arbitrary. 
    Recall the form of the gradient of the objective in \eqref{eq:gradJ}. Concerning the Lipschitz continuity of the 
    first addend, thanks to $Z = \DD^*$ by assumption, it suffices to establish
    the existence of a constant $L_M^{(j)}>0$ such that 
    \begin{equation}\label{eq:Pdist0}
        \|P_1(\tau^1_j) - P_2(\tau^2_j)\|_{\DD^*} \leq L_M^{(j)} \,\|\tau_1 - \tau_2\|_{\R^n},
    \end{equation}
    where $P_1$ and $P_2$ denote the extensions by constant continuation according to \eqref{eq:extensiononR} 
    of $p_1, p_2 \in \W_T^{q'}(X^*, \DD^*)$, which are the solutions of 
    \begin{equation}
	    -p_i' + A^* p_i + f'(y_i)^* p_i = \partial_y J(y_i, \tau_i), \quad p_i(T) = 0, \quad i=1,2
    \end{equation}
    and $y_i = S(\bar B(\tau_i)) \in \W^q_0(\DD, X)$, $i=1,2$, solves
    \begin{equation}
        y_i' + A y_i + f(y_i) = \bar B(\tau_i), \quad y(0) = y_0,\quad i = 1,2.
    \end{equation}
    Let us define 
    \begin{equation*}
        \bar \tau_j^i := \max\{ 0; \min\{T; \tau^i_j\}\}, \quad i = 1, 2 
    \end{equation*}
    \begin{equation}\label{eq:Pdist}
        \|P_1(\tau^1_j) - P_2(\tau^2_j)\|_{\DD^*} 
        \leq \|p_1(\bar\tau^1_j) - p_2(\bar\tau^1_j)\|_{\DD^*}
        + \|p_2(\bar\tau^1_j) - p_2(\bar\tau^2_j)\|_{\DD^*}
    \end{equation}
    Since $\|\tau_i\|_{\R^n} \leq M$, the global H\"older continuity of $\bar B$ by step~4 and $\bar B(0) = 0$ imply
    \begin{equation}
        \|\bar B(\tau_i)\|_{L^q(0,T;X)} \leq \bar K \, M^{1/q} =: R,
    \end{equation}
    where $\bar K > 0$ is the H\"older constant of $\bar B: \R^n \to L^q(0,T;X)$. 
    Moreover, from \eqref{eq:Slipmaxparreg}, we deduce 
    \begin{equation}\label{eq:ybound}
    \begin{aligned}
         \|y_i\|_{\W^q(\DD, X)} 
        & \leq \big( L_S \, \|\bar B(\tau_i)\|_{L^q(0,T;X)} + \|S(0)\|_{\W^q(\DD, X)} \big)\\
        & \leq \big( L_S\, R + \|S(0)\|_{\W^q(\DD, X)} \big) =: \varrho
    \end{aligned}
    \end{equation}        
    and so, \eqref{eq:boundadj} and \eqref{eq:Jprimelip} yield
    \begin{equation}\label{eq:pbound}
        \|p_i\|_{\W^{q'}(X^*, \DD^*)} 
        \leq C^*_\varrho \, \| \partial_y J(y_i, \tau_i) \|_{L^{q'}(0,T;\DD^*)}
        \leq C^*_\varrho \, L^J_{M + \varrho} (\varrho + M) .
    \end{equation}
    Owing to \eqref{eq:ybound} and \eqref{eq:pbound} and the continuous embeddings 
    $\W^{q}(\DD, X) \embed C([0,T];Y_q)$ and $\W^{q'}(X^*, \DD^*) \embed C([0,T];Y_q^*)$,  
    Lemma~\ref{lem:adjlip} in the appendix together with 
    the mapping properties of $\partial_y J$ from the assumption in \eqref{it2}
    and the assumed continuity of $f'$ from $Y_q$ to $L(Y_q, Y_q)$ implies
    \begin{equation*}
    \begin{aligned}
        \|p_2\|_{C^{0,1}(0,T;\DD^*)} 
        & \leq C' \| \partial_y J(y_2, \tau_2) -  f'(y_2)^*p_2\|_{C([0,T];Y_q^*)}\\
        & \leq C' \| \partial_y J(y_2, \tau_2) \|_{C([0,T];Y_q^*)} \\
        & \qquad+  \sup_{t\in [0,T]} \|f'(y_2(t))\|_{L(Y_q, Y_q)}
        \|p_2\|_{C([0,T];Y_q^*)}
        \leq C_p
    \end{aligned}
    \end{equation*}
    with a constant $C_p > 0$, independent of $\tau_1$ and $\tau_2$.
    Therefore, we obtain for the second term in \eqref{eq:Pdist} that
    \begin{equation}\label{eq:pdistlip}
        \|p_2(\bar\tau^1_j) - p_2(\bar\tau^2_j)\|_{\DD^*} 
        \leq C_p \, \|\tau_1 - \tau_2\|_{\R^n}.
    \end{equation}
    For the first term on the right hand side of \eqref{eq:Pdist}, we can apply \eqref{eq:plip}, 
    which results in 
    \begin{equation}\label{eq:pdist1}
    \begin{aligned}
         & \|p_1 - p_2\|_{C([0,T];\DD^*)}  \\
        & \qquad \leq C\,\|p_1 - p_2\|_{\W^{q'}(X^*;\DD^*)}  \\
        & \qquad \leq C\,L_{P, \varrho} 
        \begin{aligned}[t]
             \big( & \| \partial_y J(y_1, \tau_1) - \partial_y J(y_2, \tau_2) \|_{L^{q'}(0,T;\DD^*)} \\
             & + \|y_1 - y_2\|_{L^q(0,T;\DD)}\, \|\partial_y J(y_2, \tau_2)\|_{L^{q'}(0,T;\DD^*)} \big) .
        \end{aligned}         
    \end{aligned}
    \end{equation}
    To estimate the right hand side, note first that the Lipschitz continuity of $S$ according to \eqref{eq:Sweaklip} 
    along with the Lipschitz continuity of $\bar B$ from  $\R^n$ to 
    $[C^{0,\alpha}(0,T;\DD^*)]^* \embed [\W^{q'}_T(X^*, \DD^*)]^*$ yields 
    \begin{equation}\label{eq:ydist1}
    \begin{aligned}
        \| y_1 - y_2 \|_{L^q(0,T;\DD)} 
        &= \| S(\bar B(\tau_1)) - S(\bar B(\tau_2)) \|_{L^q(0,T;\DD)} \\
        & \leq C^*_\varrho\, \|\bar B(\tau_1) - \bar B(\tau_2)\|_{[\W^{q'}_T(X^*, \DD^*)]^*}   
        \leq C^*_\varrho L_B \| \tau_1 - \tau_2\|_{\R^n},
    \end{aligned}
    \end{equation}
    where $L_B > 0$ denotes the Lipschitz constant of $\bar B$ as a mapping with range in $[\W^{q'}_T(X^*, \DD^*)]^*$. 
    Using this estimate together, the first term on the right hand side of \eqref{eq:pdist1} 
    is estimated by means of \eqref{eq:Jprimelip} as follows
    \begin{equation*}
    \begin{aligned}
        \| \partial_y J(y_1, \tau_1) - \partial_y J(y_2, \tau_2) \|_{L^{q'}(0,T;\DD^*)}
        \leq L^J_{\varrho + M} ( C^*_\varrho L_B + 1) \|\tau_1 - \tau_2\|_{\R^n} .
    \end{aligned}
    \end{equation*}
    Inserting this together with \eqref{eq:ydist1} and an estimate of $\partial_y J(y_2, \tau_2)$ analogous to 
    \eqref{eq:pbound} in \eqref{eq:pdist1} leads to 
    \begin{equation*}
        \|p_1 - p_2\|_{C([0,T];\DD^*)}
        \leq  C \,L_{P, \varrho} \,L^J_{\varrho + M} \,( C^*_\varrho L_B + 1 + C^*_\varrho L_B(\varrho + M)) 
        \|\tau_1 - \tau_2\|_{\R^n} .
    \end{equation*}
    Along with \eqref{eq:pdistlip} and \eqref{eq:Pdist}, this finally gives
    \begin{equation*}
        \|P_1(\tau^1_j) - P_2(\tau^2_j)\|_{\DD^*} \leq L_M^{(j)} \,\|\tau_1 - \tau_2\|_{\R^n} ,
    \end{equation*}
    with a constant $L_M^{(j)} > 0$ that depends on $M$, but not on $\tau_1$ and $\tau_2$.
    This establishes \eqref{eq:Pdist0} as desired.
    
    For the Lipschitz continuity of the second part of $\nabla \JJ$, i.e., $\partial_\tau J(y, \tau)$, 
    we employ \eqref{eq:Jprimelip} and again \eqref{eq:ydist1} to deduce
    \begin{equation*}
    \begin{aligned}
        \|\partial_\tau J(y_1, \tau_1) - \partial_\tau J(y_2, \tau_2)\|_{\R^n}  
        & \leq L_{\varrho + M}^J \big( \|y_1 - y_2\|_{L^q(0,T;D)} + \|\tau_1 - \tau_2\|_{\R^n}\big) \\
        & \leq L_{\varrho + M}^J ( C^*_\varrho\, L_B + 1)\|\tau_1 - \tau_2\|_{\R^n} ,
    \end{aligned}
    \end{equation*}
    which finally completes the proof.
\end{proof}		
	
\begin{corollary}\label{cor:proxconv}
    Assume that the additional assumptions from Theorem~\ref{thm:gradlip} hold true.
    Denote the residuum in the fixed point equation by
    \begin{equation*}
        r_k := \| \Pi_{\PP} (\tau_k - L_k^{-1}\nabla \JJ(\tau_k)) - \tau_k \|_{\R^n},
    \end{equation*}
    where $\{\tau^k\}_{k\in \N}$ is the sequence generated by Algorithm~\ref{alg:projgrad}.
    Then there holds $r_k \to 0$ as $k\to\infty$ and 
    \begin{equation*}
        \min_{0 \leq i \leq k} r_i \leq \frac{\sqrt{\JJ(\tau_0) - \JJ_{\mathrm{opt}}}}{\sqrt{c(k+1)}} ,
    \end{equation*}
    where $\JJ_{\mathrm{opt}}$ denotes the optimal objective value and $c$ is defined by
    \begin{equation*}
        c := \gamma\,\min\{1, (1-\gamma) L_M^{-1}\}    
    \end{equation*}
    with $L_M$ being the 
    Lipschitz constant of $\nabla\JJ$ on $[-T,2T]^n$ and $\gamma$ the constant from Algorithm~\ref{alg:projgrad}.
\end{corollary}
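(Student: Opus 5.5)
The plan is the standard sufficient-decrease argument for the proximal (projected) gradient method with backtracking, as in \cite[Section~10.3]{Bec17}. It rests on the local Lipschitz continuity of $\nabla\JJ$ from Theorem~\ref{thm:gradlip}. The first point is that all iterates lie in $\PP\subset[0,T]^n$, since they are projections onto $\PP$. The trial points $\tau_k - L^{-1}g_k$ may leave $\PP$, but the descent lemma is applied only on the segment between $\tau_k$ and $\Pi_\PP(\tau_k - L^{-1}g_k)$. Both endpoints lie in the convex set $\PP\subset[0,T]^n\subset[-T,2T]^n$, so the whole segment does too. We choose $M$ with $[-T,2T]^n\subset\overline{B_{\R^n}(0,M)}$ and let $L_M$ be the Lipschitz constant of $\nabla\JJ$ there. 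Then for $\tau\in\PP$ and $\tau^+:=\Pi_\PP(\tau - L^{-1}\nabla\JJ(\tau))$ the descent lemma gives
\begin{equation*}
\JJ(\tau^+)\le \JJ(\tau) + \dual{\nabla\JJ(\tau)}{\tau^+-\tau} + \tfrac{L_M}{2}\|\tau^+-\tau\|^2 .
\end{equation*}

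Next we use the projection characterization $\dual{\tau - L^{-1}\nabla\JJ(\tau) - \tau^+}{\tau-\tau^+}\le 0$. It yields $\dual{\nabla\JJ(\tau)}{\tau^+-\tau}\le -L\|\tau^+-\tau\|^2$, hence
\begin{equation*}
\JJ(\tau)-\JJ(\tau^+)\ge \big(L-\tfrac{L_M}{2}\big)\|\tau-\tau^+\|^2 .
\end{equation*}
Therefore the backtracking condition $\JJ(\tau)-\JJ(\tau^+)\ge\gamma L\|\tau-\tau^+\|^2$ holds as soon as $(1-\gamma)L\ge L_M/2$. Since $L$ is doubled from the initial value $1$, the accepted value satisfies $L_k\le\max\{1, L_M/(1-\gamma)\}$. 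This is the bound that enters the constant $c$; the factor $2$ from doubling is absorbed by the factor $1/2$ in the descent lemma.

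We then translate the acceptance criterion into the residual. By definition $r_k=\|\tau_{k+1}-\tau_k\|$, since $\tau_{k+1}=\Pi_\PP(\tau_k-L_k^{-1}\nabla\JJ(\tau_k))$ with the accepted $L_k$. Since $L_k\ge 1$, the criterion gives
\begin{equation*}
\JJ(\tau_k)-\JJ(\tau_{k+1})\ge\gamma L_k r_k^2\ge \gamma\min\{1,(1-\gamma)L_M^{-1}\}\,r_k^2 = c\,r_k^2 .
\end{equation*}
The claimed constant may alternatively refer to the scaled residual $L_k r_k$. In that case the upper bound on $L_k$ gives the same form with $c$, and I would adapt the bookkeeping accordingly. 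The main care point here is making sure the constant $c$ matches exactly.

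Finally, summing from $i=0$ to $k$ telescopes to $\JJ(\tau_0)-\JJ(\tau_{k+1})\ge c\sum_{i=0}^k r_i^2\ge c(k+1)\min_{i\le k} r_i^2$. Moreover, $\JJ(\tau_{k+1})\ge\JJ_{\mathrm{opt}}$. The optimal value exists because $\JJ$ is continuous on the compact set $\PP$, and all iterates are feasible. This yields the rate. The same inequality gives $\sum_i r_i^2<\infty$, so $r_k\to0$.
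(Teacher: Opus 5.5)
Your proof is correct, but it is organized differently from the paper's. The paper does not redo the sufficient-decrease argument. It replaces $\JJ$ by the extended-valued $\widetilde\JJ$, equal to $\JJ$ on $[-T,2T]^n$ and $+\infty$ elsewhere, so that $(\widetilde\JJ,I_\PP)$ meets the hypotheses of \cite[Theorem~10.15]{Bec17}; in particular $\dom(I_\PP)\subset\interior(\dom\widetilde\JJ)$ and the gradient is Lipschitz on that interior. It then reads off the result for backtracking with initial value $1$ and factor $2$, where Beck's constant $\gamma/\max\{1,L_M/(1-\gamma)\}$ is exactly $c$. You instead unroll that theorem: descent lemma on segments in the convex set $\PP$, the projection inequality, the bound $L_k\le\max\{1,L_M/(1-\gamma)\}$, and telescoping. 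This buys two things. First, the enlargement to $[-T,2T]^n$ becomes unnecessary: iterates and trial projections never leave $\PP$, so Lipschitz continuity of $\nabla\JJ$ on $\PP$ suffices. Second, you bound $r_k$ exactly as defined, with the accepted $L_k$. Beck's theorem instead bounds the gradient mapping at step size $1$, i.e.\ $\|\tau_k-\Pi_\PP(\tau_k-\nabla\JJ(\tau_k))\|$. Passing from that to $r_k$ needs the monotonicity of the gradient mapping in the step size, which gives $r_k\le\|\tau_k-\Pi_\PP(\tau_k-\nabla\JJ(\tau_k))\|$ for $L_k\ge 1$; the paper leaves this step implicit. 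Your computation also shows that for the unscaled $r_k$ only $L_k\ge 1$ is used, so the estimate even holds with $\gamma$ in place of $c$. The upper bound on $L_k$ is needed only for finite termination of the backtracking and for the scaled residual $L_k r_k$, so it does not really ``enter $c$'' in the version you proved. The paper's route is shorter and yields Beck's remaining conclusions (monotone decrease, stationarity of limit points) at no extra cost.
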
    	

\begin{proof}
    Similarly to the proof of Theorem~\ref{thm:proxconv}, we rewrite \eqref{eq:ocp} as
	\begin{equation*}
        \eqref{eq:ocp}	
        \quad \Longleftrightarrow \quad
        \min_{\tau\in \R^n} \, \widetilde\JJ(\tau) + I_{\PP}(\tau),
	\end{equation*}
    where $I_\PP$ is again the indicator functional of $\PP$ and $\widetilde\JJ: \R^n \to (\infty, \infty]$ is defined by 
    \begin{equation*}
        \widetilde\JJ(\tau) :=
        \begin{cases}
            \JJ(\tau), & - T \leq \tau_i \leq 2T \quad \forall\, i  = 1, \ldots, n.\\
            \infty, & \text{else.}
        \end{cases}
    \end{equation*}
    Then the proximal gradient method applied to this problem gives the iteration in \eqref{eq:projgrad}, 
    since $\nabla \widetilde \JJ|_{\PP} = \nabla \JJ|_{\PP}$ and all iterates are feasible by construction of the algorithm.
	Moreover, $\widetilde\JJ$ is proper and lower semicontinous, $\dom(I_\PP) \subset \interior(\dom(\widetilde\JJ))$, 
	$\dom(\widetilde\JJ)$ is convex, and $\widetilde\JJ$ is continuously differentiable 
	on $\interior(\dom(\widetilde\JJ))$ with Lipschitz continuous derivative by Theorem~\ref{thm:gradlip}.
	Therefore, \cite[Theorem~10.15]{Bec17} is applicable, which implies the result. 
\end{proof}

\begin{remark}\label{rem:assus}
    The additional assumptions in Theorem~\ref{thm:gradlip} impose severe restrictions on the data 
    of \eqref{eq:ocp}. In particular, the assumptions in \eqref{it5}--\eqref{it7} are only fulfilled by a very limited set 
    of nonlinearities. To illustrate this, let us consider a specific setting, namely 
    \begin{equation}
        \left\{\quad 
        \begin{aligned}
            \min \quad & J(y, \tau) 
            := \frac{1}{2}\, \|y - y_d\|_{L^2(0,T;L^2(\Omega))}^2 + \frac{\alpha}{2}\, \|\tau\|_{\R^n}^2 \\
            \text{s.t.} \quad & y' - \Delta y + f(y) = \sum_{i=1}^{n-1}\chi_{[\tau_i, \tau_{i+1})}\,\psi_i 
            \quad \text{in } (0,T) \times \Omega,\\
             & y(0) = y_0 \text{ in } \Omega, \quad y = 0 \text{ in } (0,T) \times \partial\Omega,\\
             & \tau \in \PP.
        \end{aligned}
        \right.
    \end{equation}
    Herein, $\Omega \subset \R^d$, $d\in \N$, is an open set, $y_d\in L^2(0,T;L^2(\Omega))$ 
    is a given desired state, and $\alpha \geq 0$ a given Tikhonov regularization parameter. 
    For the spaces $\DD$ and $X$, we choose $\DD := H^1_0(\Omega)$ and 
    $X := H^{-1}(\Omega) := H^1_0(\Omega)^*$, and $\Delta \in L(H^1_0(\Omega), H^{-1}(\Omega))$ 
    is the distributional Laplacian.
    Then Assumption~\ref{assu:spaces} is fulfilled and $-\Delta$ satisfies the assumption of maximal parabolic regularity 
	from Assumption~\ref{assu:maxparreg} by the standard $W(0,T)$-theory, see, e.g., \cite[Theorem~26.1]{Wlo87}.
    If we set the parameter $q= 2$, then 
    \begin{equation*}
        Y_q =(H^{-1}(\Omega), H^1_0(\Omega))_{1/2, 2} =L^2(\Omega)
    \end{equation*}
    and consequently, according to the assumption in \eqref{it5}, the nonlinearity has to be globally continuous as
    a mapping from $L^2(\Omega)$ to $H^{-1}(\Omega)$. Suppose that the nonlinearity is given in form of a Nemyzki
    operator $f(y)(t,x) := \mathfrak{f}(y(x,t))$ with a nonlinear function $\mathfrak{f}: \R \to \R$.
    Then this condition is only fulfilled if $\mathfrak{f}$ is globally Lipschitz continuous on the whole of $\R$. 
    A close inspection of the proof of Theorem~\ref{thm:gradlip} shows that this restrictive condition 
    is only needed to prove the existence and boundedness of solutions of the state equation. 
    If $\mathfrak{f}$ is monotone increasing, then a comparison principle with a linearized equation can be applied to 
    obtain the same result without the restrictive assumption of global Lipschitz continuity, 
    see, e.g., \cite{RZ99} for details. 
    While this assumption can thus be replaced by an alternative assumption, 
    the most severe restrictions are probably the assumptions in \eqref{it6} and \eqref{it7}. To fulfill \eqref{it7} in our setting, 
    $\mathfrak{f}$ needs to be continuously differentiable with bounded derivative such as 
    $\mathfrak{f} = \sin$ or $\mathfrak{f} = \arctan$. If this is the case, then the Nemyzki operator associated 
    with $\mathfrak{f}'$ maps $L^2(\Omega)$ to $L(L^2(\Omega), L^2(\Omega))$ and is continuous in these spaces 
    as required in \eqref{it7}.
    We underline that Assumption~\ref{assu:f} is fulfilled with such a function $\mathfrak{f}$, too. Indeed,
    if $\mathfrak{f}'$ is bounded, then $f$ is continuously 
    Fr\'echet differentiable from $H^1_0(\Omega)$ to $L^2(\Omega)$ and Assumption~\ref{assu:f} is clearly fulfilled
    with $r = \infty$. Furthermore, the boundedness of $\mathfrak{f}'$ implies that the global Lipschitz condition in 
    \eqref{it5} is satisfied. Finally, in order to satisfy the condition in \eqref{eq:fprimelip}, we additionally need to require that 
    $\mathfrak{f}'$ is globally Lipschitz continuous, too, since $q = 2$ implies that $s = 0$.
    Note that the above examples, i.e., $\mathfrak{f} = \sin$ and $\mathfrak{f} = \arctan$, satisfy all these assumptions. 
    Furthermore, the $L^2$-tracking type objective satisfies \eqref{eq:Jprimelip} and the condition in \eqref{it2}
    due to $\W^2(H^1_0(\Omega), H^{-1}(\Omega)) \embed C([0,T];L^2(\Omega))$.
    This shows that, although the assumptions in Theorem~\ref{thm:gradlip} are very restrictive, the set of problems
    satisfying them is at least not empty.
\end{remark}

	\begin{remark}
	    We emphasize that there are of course more efficient methods to solve a problem of type \eqref{eq:ocp} 
	    than the proximal gradient resp.\ projected gradient method from Algorithm~\ref{alg:projgrad}.
	    One could for instance apply Nesterov's acceleration \cite{Nes83} to obtain a
	    FISTA-type method, see, e.g., \cite{BT09}. 
	    It is however not our goal to develop an algorithm with maximum efficiency for \eqref{eq:ocp}. We rather aim 
	    to use a method as robust as possible that allows for a rigorous convergence analysis as stated in 
	    Corollary~\ref{cor:proxconv}. 
	    The reason is that we would like to demonstrate the non-convex nature of the switching-point-optimization 
	    problem, where even a robust method allowing for a convergence analysis in function space in general 
	    fails to converge to global minimizers as we will see in the next section.
	\end{remark}

	\section{Numerical experiments}\label{sec:numerics}

	For our numerical tests, we consider a setting similar to the one in Remark~\ref{rem:assus}. 
	For the spaces $X$ and $\DD$, we again choose $\DD = H^1_0(\Omega)$ and $X = H^{-1}(\Omega)$, 
	where $\Omega = (0,1)^2$ is the unit square in two spatial dimensions. 
	For the operator $A$, we choose the (negative) Laplacian $A = - \Delta : H^1_0(\Omega) \to H^{-1}(\Omega)$
	such that all conditions in Assumption~\ref{assu:spaces} and the assumption of maximal parabolic regularity
	from Assumption~\ref{assu:maxparreg} are fulfilled, see Remark~\ref{rem:assus}.
	 The nonlinearity is simply set to zero, i.e., $f\equiv 0$, such that the state equation is linear 
	 and just given by the heat equation with homogeneous Dirichlet boundary conditions. 
	 Note that $f \equiv 0$ trivially satisfies Assumption~\ref{assu:f} with $q=2$ and 
	 all additional conditions in Theorem~\ref{thm:gradlip}.
    Furthermore, the objective is given by a tracking type objective of the form 
    \begin{equation*}
        J(y, \tau) := \tfrac{1}{2} \| y(\tau) - y_d \|^2_{L^2(0,T;L^2(\Omega))}, 
    \end{equation*}
    where 
    $y_d \in L^2(0,T;L^2(\Omega))$ and $\tau_d \in \R^n$ 
    are given desired state and desired switching points.
    Note that $J$ clearly satisfies the conditions in Assumption~\ref{assu:obj}, but also the additional assumptions from 
    Theorem~\ref{thm:gradlip}.
    Finally, the form functions are of the form \eqref{eq:switch} with a function $\psi \in \DD = H^1_0(\Omega)$ 
    such that Assumption~\ref{assu:formfunc} and the additional condition in Theorem~\ref{thm:gradlip} are fulfilled, too.
    To summarize, the convergence result from Corollary~\ref{cor:proxconv} applies in this example.

    In order to be able to construct an exact globally optimal solution of \eqref{eq:ocp} for comparison, we slightly modify the state equation and consider 
    \begin{equation}\label{eq:PDEbsp}
        y'(t) - \Delta y(t) = \sum_{i=1}^{n-1}\bar\chi_{[\tau_i, \tau_{i+1})}(t)\,\psi_i + w(t), 
		\quad t \in (0,T),\quad y(0) = y_0
    \end{equation}
    with a fixed additional right hand side $g\in L^2(0,T;H^{-1}(\Omega))$. It is straight forward to see that 
    this additional affine term does not influence the above analysis. 
    The benefit is that it allows us to construct exact solutions in the following way. 
	 We arbitrarily choose a $\tau_\text{opt} \in \PP$ and a desired state 
	 $y_d \in \W^2_0(H^1_0(\Omega), H^{-1}(\Omega))$ and set 	 
	\begin{equation*}
	 	w(t,x) \coloneqq \partial_t y_d(t,x) - \Delta y_d(t,x) - \sum_{i=1}^{n-1}\bar\chi_{[\tau_i^\text{opt}, \tau_{i+1}^\text{opt})}(t)\,\psi_i(x)
	 \end{equation*}
	 to fulfill the state equation. If we then choose $\tau_d = \tau_{\mathrm{opt}}$, then it is clear that $\tau_{\mathrm{opt}}$ 
	 solves \eqref{eq:ocp} with optimal objective value $J(y_d, \tau_{\mathrm{opt}}) = 0$.
	 
	 The heat equation is discretized by means of standard continuous and piecewise linear 
	 finite elements on a Friedrich-Keller-triangulation of $\Omega$. 
	 For a given $\tau \in \R^n$, the right hand side in \eqref{eq:PDEbsp}, i.e.,
    $g := \sum_{i=1}^{n-1}\bar\chi_{[\tau_i, \tau_{i+1})}\psi_i + w$,  
	 is discretized in space by evaluating its values in the nodes $(x_i)_{i = 1, \ldots, n}$ 
	 of the finite elements mesh and afterwards interpolated using Lagrange interpolation. 
	 With regard to time, it is discretized by means of local averaging with functions 
	 $v_i \in C([0,T]; [0, \infty))$ satisfying  $\text{supp}(v_i) = [t_{i-1}, t_i]$, $ i = 1, \ldots, k$, 
	 which is due to the discontinuity of the control in time caused by the switching. 
	 Herein, $0 = t_0 < t_1 < \ldots < t_k = T$ denotes the time mesh.
	 For the temporal discretization of the PDE, we employ an implicit Euler scheme, which along 
	 with the local averaging of the right hand side leads to the following linear system of equation 
	 for each time point:
	 \begin{equation}\label{eq: rhs}
	 	M\, \frac{y_i - y_{i-1}}{\Delta_{t_i}} + A\, y_i 
	 	= \frac{\int_{t_{i-1}}^{t_i} M[g(t,x_i)]_{i = 1}^n v_i(t) \,\d t }{\int_{t_{i-1}}^{t_i} v_i(t) \,\d t},
	 	\quad i = 1, \ldots, k,
	 \end{equation}
	 where $M$ and $A$ denote mass and stiffness matrix. 
	 The objective is discretized by using Lagrange interpolation in time and space of the desired state as well as 
	 of the discretized state as solution of \eqref{eq: rhs}. 
	 Since the implicit Euler-scheme is self-adjoint, 
	 the implementation of the projected gradient method using the correct discretized gradient is straight forward. 
    
    For the construction of an exact globally optimal solution for the discretized optimization problem, we follow
    the lines of the continuous case discussed above. We first fix a set of optimal switching times and set 
    $\tau_d = \tau_{\mathrm{opt}}$. Moreover, we fix a desired state in the nodes of the Lagrange interpolation, 
    set the optimal discretized state equal to the desired one, plug that into the Euler scheme in \eqref{eq: rhs}
    and obtain the discretized counterpart of artificial right hand side $w$. The optimal objective value is thus again zero.
	 

	 \subsection{Numerical results}\label{sec:numres}
	 For our numerical examples, we set 
	 \begin{equation*}
        T \coloneqq 1, \quad y_d(t,x) \coloneqq t^2 \sin(\pi x_1) \sin(\pi x_2), \quad y_0 \coloneqq 0, \quad 
        \psi(x) \coloneqq  10 \exp(-2 \| x - \tfrac 12 \|^2_{\R^n}),
	 \end{equation*}
	 while the optimal switching times are set to 
    \begin{equation}\label{eq:tauopt}
		\tau_{\mathrm{opt}} = (0,  0.1, 0.15, 0.25, 0.60, 0.65, 0.70, 0.75, 0.80, 0.85).
    \end{equation}
    We emphasize that this is a generic choice of $\tau_{\mathrm{opt}}$ without any specific properties.
	 The local averaging functions in \eqref{eq: rhs} are chosen to be continuous and 
	  piecewise linear on the temporal mesh and satisfy
	  \begin{equation*}
	  	 v_i(t_{i-1}) = 0, \quad v_i(\tfrac{t_{i-1} + t_i}{2}) = 1, \quad v_i(t_i) = 0, \quad i = 1, \ldots, k.
	  \end{equation*}
	  This allows us to compute the integrals in \eqref{eq: rhs} exactly. 
	  In the following numerical experiments, the time interval $[0,1]$ is discretized by an equidistant mesh with 1001 points, 
	  while the spatial domain is discretized by a uniform Friedrich-Keller triangulation with 40 points in each spatial dimension. 
	  The algorithm terminates if any of the following stopping criteria is satisfied: 
	  \begin{itemize}
	  	\item the maximum number of 200 iterations is reached, 
	  	\item the norm of the residuum in the fixed point equation is smaller than a tolerance of $10^{-8}$, i.e., 
	  	\begin{equation*}
	  		r_k = \| \Pi_{\PP} (\tau_k - L_k^{-1}\nabla \JJ(\tau_k)) - \tau_k \|_{\R^n} < 10^{-8},
	  	\end{equation*}
	  	\item the relative change in the switching times is smaller than a tolerance of $10^{-8}$, i.e., 
	  	\begin{equation*}
	  		\frac{\| \tau_k - \tau_{k+1}\|_{\R^n}}{\|\tau_k \|_{\R^n}} < 10^{-8}. 
	  	\end{equation*}
	  \end{itemize}
	We present two types of numerical experiments, differing in the choice of the exact solution
	\begin{itemize}
		\item Case (i): exact solution in function space,
		\item Case (ii): exact solution after discretization. 
	\end{itemize}
	In the following, we present the results of 10 numerical tests with different initial values $\tau_0$, 
	which are randomly chosen without special properties.
	
\begin{table}[!h]
	\centering
\resizebox{\linewidth}{!}{	
\begin{tabular}{c c c @{\hspace{5pt}} c}
		\hline
		iteration & objective value & $r_k$ & $\tau_k$ \\
		\hline
		10 & 1.8622e-03 &   6.1176e-03 &  (0.000 0.100 0.150 0.250 0.481 0.484 0.697 0.830 0.995 1.000)\\
		100 & 1.8613e-03 &   4.6699e-07 &  (0.000 0.100 0.150 0.250 0.482 0.486 0.697 0.830 0.997 1.000)\\
		200 & 1.8613e-03 &   7.2219e-08 &  (0.000 0.100 0.150 0.250 0.482 0.486 0.697 0.830 0.997 1.000)\\
		\hline
	\end{tabular}}
	\caption{Case (i), $\tau_0 = (0.05, 0.1, 0.15, 0.25, 0.4, 0.55, 0.65, 0.85, 0.9, 1)$ }
	\label{tab:example1}
\end{table}
\begin{table}[!h]
	\centering
\resizebox{\linewidth}{!}{	
\begin{tabular}{c c c @{\hspace{5pt}} c}
	\hline
	iteration & objective value & $r_k$ & $\tau_k$ \\
		\hline
		10 & 1.7542e-03 &   3.7323e-03 &  (0.000 0.100 0.150 0.250 0.549 0.567 0.698 0.827 0.907 0.910)\\
		100 & 8.3545e-04 & 9.4382e-08 &  (0.000 0.100 0.150 0.250 0.600 0.652 0.719 0.832 0.909 0.911)\\
		108 & 8.3545e-04 &   7.2652e-08 &  (0.000 0.100 0.150 0.250 0.600 0.652 0.719 0.832 0.909 0.911)\\
		\hline
	\end{tabular}}
	\caption{Case (i), $\tau_0 = (0, 0.1, 0.15, 0.2, 0.55, 0.55, 0.65, 0.85, 0.9, 0.9)$ }
	\label{tab:example2}
\end{table}
\begin{table}[!h]
	\centering
\resizebox{\linewidth}{!}{	
\begin{tabular}{c c c @{\hspace{5pt}} c}
	\hline
	iteration & objective value & $r_k$ & $\tau_k$ \\
		\hline
	10 & 1.5858e-06 &   1.0682e-03 &  (0.000 0.100 0.150 0.250 0.600 0.649 0.699 0.749 0.799 0.850)\\
	100 & 7.3990e-08 & 4.4680e-07 &  (0.000 0.100 0.150 0.250 0.600 0.650 0.700 0.750 0.800 0.850)\\
	181 & 7.3989e-08  & 6.4023e-08 &  (0.000 0.100 0.150 0.250 0.600 0.650 0.700 0.750 0.800 0.850)\\
		\hline
	\end{tabular}}
	\caption{Case (i), $\tau_0 = (0.05, 0.15, 0.15, 0.25, 0.55, 0.6, 0.7, 0.75, 0.8, 0.9)$ }
	\label{tab:example3}
\end{table}
\begin{table}[!h]
	\centering
\resizebox{\linewidth}{!}{	
\begin{tabular}{c c c @{\hspace{5pt}} c}
	\hline
	iteration & objective value & $r_k$ & $\tau_k$ \\
		\hline
		10 & 1.0351e-03 &  1.4137e-02 &  (0.001 0.105 0.149 0.251 0.596 0.641 0.691 0.769 0.991 1.000)\\
		100 & 9.4102e-04 &  1.8979e-05 &  (0.000 0.100 0.150 0.250 0.600 0.650 0.704 0.776 0.995 1.000)\\
		175 & 9.4102e-04 &  2.6253e-08 &  (0.000 0.100 0.150 0.250 0.600 0.650 0.704 0.776 0.995 1.000)\\
		\hline
	\end{tabular}}
	\caption{Case (i), $\tau_0 = (0.05, 0.15, 0.25, 0.3, 0.45, 0.6, 0.7, 0.75, 0.9, 1)$ }
	\label{tab:example4}
\end{table}
\begin{table}[!h]
	\centering
\resizebox{\linewidth}{!}{	
\begin{tabular}{c c c @{\hspace{5pt}} c}
	\hline
	iteration & objective value & $r_k$ & $\tau_k$ \\
		\hline
10 & 1.8478e-03 &  1.2423e-02 &  (0.005 0.236 0.257 0.260 0.600 0.650 0.702 0.769 0.870 0.891)\\
100 & 1.0447e-03 &  1.0978e-05 &  (0.005 0.243 0.258 0.259 0.600 0.650 0.700 0.750 0.800 0.850)\\
200 & 1.0447e-03 &  8.0109e-07 &  (0.005 0.243 0.258 0.259 0.600 0.650 0.700 0.750 0.800 0.850)\\
		\hline
	\end{tabular}}
	\caption{Case (i), $\tau_0 = (0.05, 0.15, 0.25, 0.45, 0.6, 0.65, 0.7, 0.75, 0.9, 0.9)$ }
	\label{tab:example5}
\end{table}
\begin{table}[!h]
	\centering
\resizebox{\linewidth}{!}{	
\begin{tabular}{c c c @{\hspace{5pt}} c}
	\hline
	iteration & objective value & $r_k$ & $\tau_k$ \\
		\hline
	10 & 1.8605e-03 &   1.3095e-03 &  (0.000 0.100 0.150 0.250 0.485 0.488 0.698 0.827 0.996 1.000)\\
	50 & 1.8588e-03 &   7.4467e-04 &  (0.000 0.100 0.150 0.250 0.486 0.489 0.698 0.830 0.997 1.000)\\
	76 & 1.8586e-03 &   6.5060e-08 &  (0.000 0.100 0.150 0.250 0.487 0.490 0.698 0.830 0.997 1.000)\\
	\hline
	\end{tabular}}
	\caption{Case (ii), $\tau_0 = (0.05, 0.1, 0.15, 0.25, 0.4, 0.55, 0.65, 0.85, 0.9, 1)$ }
	\label{tab:example6}
\end{table}
\begin{table}[!h]
	\centering
\resizebox{\linewidth}{!}{	
\begin{tabular}{c c c @{\hspace{5pt}} c}
	\hline
	iteration & objective value & $r_k$ & $\tau_k$ \\
		\hline
		10 & 1.5627e-03 &  6.0360e-03 &  (0.000 0.100 0.149 0.250 0.569 0.594 0.700 0.829 0.902 0.904)\\
		100 & 8.4003e-04 &   6.4796e-08 &  (0.000 0.100 0.150 0.250 0.600 0.652 0.719 0.829 0.901 0.903)\\
		200 & 8.4003e-04 &   6.2153e-06 &  (0.000 0.100 0.150 0.250 0.600 0.652 0.719 0.829 0.901 0.903)\\
		\hline
	\end{tabular}}
	\caption{Case (ii), $\tau_0 = (0, 0.1, 0.15, 0.2, 0.55, 0.55, 0.65, 0.85, 0.9, 0.9)$ }
	\label{tab:example7}
\end{table}
\begin{table}[!h]
	\centering
\resizebox{\linewidth}{!}{	
\begin{tabular}{c c c @{\hspace{5pt}} c}
	\hline
	iteration & objective value & $r_k$ & $\tau_k$ \\
		\hline
	10 & 1.4696e-06 &   2.6819e-03 &  (0.000 0.100 0.150 0.250 0.600 0.650 0.700 0.751 0.799 0.850)\\
	100 & 1.8157e-08 &   3.9587e-07 &  (0.000 0.100 0.150 0.250 0.600 0.650 0.700 0.750 0.800 0.850)\\
	200 & 1.8154e-08 &   4.4084e-07 &  (0.000 0.100 0.150 0.250 0.600 0.650 0.700 0.750 0.800 0.850)\\
		\hline
	\end{tabular}}
	\caption{Case (ii), $\tau_0 = (0.05, 0.15, 0.15, 0.25, 0.55, 0.6, 0.7, 0.75, 0.8, 0.9)$ }
	\label{tab:example8}
\end{table}
\begin{table}[!h]
	\centering
\resizebox{\linewidth}{!}{	
\begin{tabular}{c c c @{\hspace{5pt}} c}
	\hline
	iteration & objective value & $r_k$ & $\tau_k$ \\
		\hline
		10 & 1.5788e-03 &  2.5297e-02 &  (0.003 0.141 0.167 0.252 0.587 0.639 0.692 0.770 0.994 1.000)\\
		100 & 9.3239e-04 &  2.4008e-07 &  (0.000 0.100 0.150 0.250 0.600 0.650 0.704 0.775 0.995 1.000)\\
		172 & 9.3239e-04 &  3.8270e-08 &  (0.000 0.100 0.150 0.250 0.600 0.650 0.704 0.775 0.995 1.000)\\
		\hline
	\end{tabular}}
	\caption{Case (ii), $\tau_0 = (0.05, 0.15, 0.25, 0.3, 0.45, 0.6, 0.7, 0.75, 0.9, 1)$ }
	\label{tab:example9}
\end{table}
\begin{table}[!h]
	\centering
\resizebox{\linewidth}{!}{	
\begin{tabular}{c c c @{\hspace{5pt}} c}
	\hline
	iteration & objective value & $r_k$ & $\tau_k$ \\
		\hline
10 & 1.8230e-03 &  6.4562e-03 &  (0.000 0.244 0.284 0.284 0.601 0.650 0.703 0.770 0.869 0.887)\\
50 & 1.0587e-03 &  2.9191e-05 &  (0.000 0.244 0.285 0.286 0.600 0.650 0.700 0.750 0.800 0.850)\\
93 & 1.0587e-03 &  3.4054e-08 &  (0.000 0.244 0.285 0.286 0.600 0.650 0.700 0.750 0.800 0.850)\\
	\hline
	\end{tabular}}
	\caption{Case (ii), $\tau_0 = (0.05, 0.15, 0.25, 0.45, 0.6, 0.65, 0.7, 0.75, 0.9, 0.9)$ }
	\label{tab:example10}
\end{table}

We observe that, except for the cases in Tables~\ref{tab:example3} and \ref{tab:example8}, 
the algorithm does not seem to converge to the global 
minimizer $\tau_{\mathrm{opt}}$. 
The iteration frequently stops in a stationary point, where the residual of the fixed point equation becomes comparatively small, 
but the limit substantially differs from $\tau_{\mathrm{opt}}$.
This underlines the non-convex nature of the switching-point-to-control map, which is the only source of non-convexity 
in this example. 
Note that the value of the objective is also significantly larger in stationary points different from $\tau_{\mathrm{opt}}$
compared to the examples in Table~\ref{tab:example3} and \ref{tab:example8}, 
where $\tau_k \approx \tau_{\mathrm{opt}}$.

To summarize, we observe that we frequently do not converge to the global minimizer, 
even if the initial value is not that far away from the optimal value $\tau_{\mathrm{opt}}$.
This underlines the non-convex nature of the problem under consideration.

\section{Convexification of the Feasible Set}\label{sec:convex}

As the numerical results from Section~\ref{sec:numres} demonstrate, we can in general not expect 
that a standard algorithm from nonlinear optimization delivers a global minimizer of \eqref{eq:ocp}, 
even in the probably most simple case, where the parabolic equation is linear and the objective is quadratic.
Therefore, if one aims to solve the problem globally, then one has to resort to other strategies such as 
branch-and-bound. In this context, the computation of dual bounds is probably the most challenging task.
This requires to construct convex relaxations, where stronger bounds are obtained by tighter relaxations. In particular, we are interested in the convex hulls of our feasible sets, which yield best possible convexifications (as long as the objective function is not taken into account).

The only source of non-convexity in the numerical example in Section~\ref{sec:numerics} is the 
switching-point-to-control operator $B$. To obtain a tight convexification, let us define
the \emph{set of feasible switching functions} by 
\begin{equation*}
\begin{aligned}
	\FF = \big\{ (\tau, \chi) = (\tau_1, \ldots, \tau_n, \chi_1, \ldots, \chi_{n-1}) \in \R^n \times L^q(0,T)^{n-1} \colon \quad &\\
    \tau \in \PP, \quad \chi_i = \chi_{[\tau_i, \tau_{i+1})},  \; i = 1, \ldots, n-1 & \big\}. 
\end{aligned}
\end{equation*}
Moreover, we define the following linear and continuous operator
\begin{equation*}
    L: L^q(0,T)^{n-1} \to L^q(0,T;X) , \quad
    L \chi := \sum_{i=1}^n \chi_i \psi_i,
\end{equation*}
where $\psi_1, \ldots, \psi_{n-1}$ are the form functions from Assumption~\ref{assu:formfunc}.
Then, we can rewrite \eqref{eq:ocp} as
\begin{equation*}
    \eqref{eq:ocp}
    \Longleftrightarrow
    \left\{ 
    \begin{aligned}
        ~ \min \quad & J(y,\tau) \\
		\text{s.t.} \quad & \text{$y\in L^q(0,T;\DD)$ is a weak solution associated with $u = L \chi$},\\
        & (\tau, \chi) \in \FF. 
    \end{aligned}
    \right.
\end{equation*}
Thus, if $J$ is quadratic and the parabolic equation is linear, the only non-convex part of \eqref{eq:ocp} is hidden 
in the set $\FF$ of feasible switching functions and a tight convexification of \eqref{eq:ocp} reads 
\begin{equation}\label{eq:convocp}
    \left\{\quad 
    \begin{aligned}
        \min \quad & J(y,\tau) \\
		\text{s.t.} \quad & \text{$y\in L^q(0,T;\DD)$ is a weak solution associated with $u = L \chi$},\\
        & (\tau, \chi) \in \overline{\conv(\FF)}^{\R^n \times L^q(0,T)^{n+1}}. 
    \end{aligned}
    \right.
\end{equation}
The aim of this section is to give a precise characterization of the closure of the convex hull of $\FF$.
As we will see, this is achieved by the following set
\begin{equation*}
\begin{aligned}
    \FF^{\mathrm{ext}} \coloneqq 
    \Big\{ (\tau, \chi, z) &= (\tau_1, \ldots, \tau_n, \chi_1, \ldots, \chi_{n-1}, z_1, \ldots, z_n)  \\[-1ex]
    &\in \R^n \times L^q(0,T)^{n-1}  \times L^q(0,T)^n: \\ 
    & \quad
    \begin{aligned}
        & \chi_i, z_j \in \BV(0,T; [0,1]) & & \forall\,  i = 1, \ldots, n-1, \; j = 1, \ldots, n,  \\
        & \chi_i(t) = z_i(t) - z_{i+1}(t) & & \forall \, i = 1, \ldots, n-1,  \text{ f.a.a.\ }t \in (0,T),\\
        & z_{i}(t) \leq z_{i-1}(t) & &  \forall\, i = 2, \ldots, n, \text{ f.a.a.\ }t \in (0,T), \\
        & Dz_{i} \geq 0 & & \forall \, i = 1, \ldots, n,   \\ 
        & \tau_i = \int_0^T (1- z_i(t)) \,\d t & & \forall\,  i = 1, \ldots, n  \Big\},
    \end{aligned}
\end{aligned}
\end{equation*}
where $Dz_i \geq 0$ means that $Dz_i$ is a non-negative measure, i.e., $Dz_i(A) \geq 0$ for all $A \in \mathcal B(0,T)$. 

Given a function $v\in \BV(0,T)$, we associate a particular representative with the equivalence class $[v]$, 
namely the function $V$ defined by 
\begin{equation}\label{eq:goodrepresentative}
	V(t) := w(t) + \frac{1}{T} \int_0^T (v(s) - w(s)) \, \d s,
	\quad
	w(t) := 
	\begin{cases}
		Dv((0,t]), & t \geq 0, \\
		Dv((0,T)), & t = T.
	\end{cases}
\end{equation}
We call the function from \eqref{eq:goodrepresentative} the \emph{good representative} of $[v]$. 
It is easily seen that $V$ is continuous from the right in every point in $[0,T)$ and continuous from the left in $t = T$. 

\begin{lemma}\label{lem:closed, conv}
	The set $\FF^{\mathrm{ext}}$ is nonempty, convex, and closed. 
\end{lemma}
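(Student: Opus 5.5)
Nonemptiness will follow from an explicit element. Take any $\tau\in\PP$ and set $z_j := \chi_{[\tau_j,T)}$ for $j=1,\ldots,n$ and $\chi_i := \chi_{[\tau_i,\tau_{i+1})}$. Each $z_j$ is a nondecreasing $\{0,1\}$-valued step function, so $z_j\in\BV(0,T;[0,1])$ and $Dz_j=\delta_{\tau_j}\ge 0$ when $\tau_j\in(0,T)$; otherwise $Dz_j=0$. Monotonicity of $\tau$ gives $z_i\le z_{i-1}$, and we have $z_i-z_{i+1}=\chi_{[\tau_i,\tau_{i+1})}$ as well as $\int_0^T(1-z_i)\,\d t=\tau_i$. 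Hence $(\tau,\chi,z)\in\FF^{\mathrm{ext}}$. In fact this shows that every element of $\FF$ lifts into $\FF^{\mathrm{ext}}$, which will be useful later.

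Convexity is immediate, because every defining condition is convex. The set $\BV(0,T;[0,1])$ is convex, since both $\BV$ and the pointwise box constraint are preserved under convex combinations. The equalities $\chi_i=z_i-z_{i+1}$ and $\tau_i=\int_0^T(1-z_i)\,\d t$ are affine. The constraint $z_i\le z_{i-1}$ a.e.\ is a linear inequality. The constraint $Dz_i\ge0$ is preserved because $D$ is linear and the nonnegative measures form a convex cone.

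For closedness, I take a sequence $(\tau^k,\chi^k,z^k)\in\FF^{\mathrm{ext}}$ converging to $(\tau,\chi,z)$ in $\R^n\times L^q(0,T)^{n-1}\times L^q(0,T)^n$. Passing to a subsequence, $z^k_j\to z_j$ and $\chi^k_i\to\chi_i$ a.e. This preserves the a.e.\ constraints: the values lie in $[0,1]$, $z_i\le z_{i-1}$, and $\chi_i=z_i-z_{i+1}$. Since $L^q$-convergence on a bounded interval implies $L^1$-convergence, the integral identities pass to the limit as well.

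The main obstacle is showing that the limit is still $\BV$ with $Dz_j\ge0$; $\BV$ itself is not closed in $L^q$. Here I will use the uniform bound
\[
|z^k_j|_{\TV}=Dz^k_j((0,T))\le 1 .
\]
To justify it, recall that $Dz^k_j\ge 0$ means $z^k_j$ is (a.e.\ equal to) a nondecreasing function, namely its good representative from \eqref{eq:goodrepresentative}, and this function takes values in $[0,1]$. For $\chi^k_i$ we then get $|\chi^k_i|_{\TV}\le 2$ from the triangle inequality.

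Lower semicontinuity of the total variation under $L^1$-convergence then gives $z_j,\chi_i\in\BV(0,T)$. To get $Dz_j\ge0$, I will test against nonnegative $\varphi\in C_c^\infty(0,T)$:
\[
\dual{Dz_j}{\varphi}=-\int_0^T z_j\varphi'\,\d t=\lim_{k\to\infty}\Big(-\int_0^T z^k_j\varphi'\,\d t\Big)=\lim_{k\to\infty}\dual{Dz^k_j}{\varphi}\ge0 .
\]
A nonnegative distribution is a nonnegative measure, and since $Dz_j$ is already known to be a finite measure, this gives $Dz_j\ge0$. Alternatively, a.e.\ limits of nondecreasing functions are nondecreasing a.e., which gives the same conclusion. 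Finally, because the limit is unique, the whole sequence has a limit in $\FF^{\mathrm{ext}}$, so the set is closed.
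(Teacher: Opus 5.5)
Your proof is correct and follows essentially the same route as the paper: convexity comes from the affine (and cone-type) constraints, and closedness comes from a.e.\ limits together with the uniform bound $|z^k_j|_{\TV}\le 1$ obtained from the monotone good representative. For the last step, your lower semicontinuity of the total variation and your test against nonnegative $\varphi$ simply repackage the paper's weak-$*$ compactness in $\BV$ and weak-$*$ closedness of the cone of nonnegative measures. Your nonemptiness witness, lifting an element of $\FF$ via $z_j=\chi_{[\tau_j,T)}$ as the paper itself does later in the proof of the main theorem, is actually the safer choice: the paper's $(\boldsymbol{1},0,0)$ satisfies $\tau_i=\int_0^T(1-z_i)\,\d t$ only when $T=1$.
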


\begin{proof}
    Since all constraints in $\FF^{\mathrm{ext}}$ are affine, the set is obviously convex.
    Moreover, due to $(\boldsymbol{1},0, 0) \in \FF^{\mathrm{ext}}$ with $\boldsymbol{1} := (1, \ldots, 1) \in \R^n$,
    it is nonempty.
    
	To show its closedness, let $\{(\tau^{(n)}, \chi^{(n)}, z^{(n)})\}_{n \in \N} \subset \mathcal F^{\text{ext}}$ 
	be a sequence converging to some $(\tau, \chi, z)$, i.e., 
    \begin{equation*}
	    \tau^{(n)} \to \tau  \; \text{ in } \R^n,\quad
	    \chi^{(n)} \to \chi  \;  \text{ in } L^q(0,T)^{n-1}, \quad  
	    z^{(n)}_i \to z_i  \; \text{ in } L^q(0,T)^n, 
    \end{equation*}
    as $n \to \infty$. From strong convergence in $L^q(0,T)$, it immediately follows that 
	\begin{equation*}
		\chi_i = z_i - z_{i+1}, \quad i = 1, \ldots, n-1, 
		\quad 
		z_j \leq z_{j-1}, \quad j = 2, \ldots, n, 
		\quad \text{a.e.\ in } (0,T)
	\end{equation*}
	as well as $0 \leq \chi_i, z_j \leq 1$, $i = 1, \ldots, n-1$, $j = 1, \ldots, n$,  a.e.\ in $(0,T)$ and 
	\begin{equation*}
	 	\tau_i = \lim\limits_{n \to \infty} \tau_i^{(n)} = \lim\limits_{n \to \infty} \int_0^T (1-z_i^{(n)}(t))  \,\d t =  \int_0^T (1-z_i(t))  \,\d t. 
	\end{equation*}
	Due to $Dz^{(n)}_i \geq 0$ and $Z^{(n)}_i(t) \in [0,1]$
        for all $t \in [0,T], i = 1, \ldots, n$, by the 
	one-sided continuity of the good representative, we moreover obtain 
	\begin{equation*}
		|z_i^{(n)}|_{\TV} = \int_{0}^{T} \d Dz_i^{(n)}(t) = Z_i^{(n)}(T) - Z_i^{(n)}(0) \leq 1, 
	\end{equation*}
	and, since $\chi_i = z_i - z_{i+1}$, it holds that 
	\begin{equation*}
			|\chi_i^{(n)}|_{\TV} \leq 	|z_i^{(n)}|_{\TV} + 	|z_{(i+1)}^{(n)}|_{\TV} \leq 2. 
	\end{equation*}
	Consequently, the sequences $\{\chi^{(n)}\}_{n \in \N}$, $\{z^{(n)}\}_{n \in \N}$
    are bounded in $\BV(0,T)^{n-1}$ and $\BV(0,T)^{n}$, respectively. 
    Owing to \cite[Proposition~10.1.1]{ABM06}, we can thus select
    subsequences converging weakly in $\BV$, such that the limits satisfy 
    $\chi_i, z_j \in \text{BV}(0,T), i = 1, \ldots, n-1, j = 1, \ldots, n$. 
    Moreover, the set $\{ \mu \in \mathfrak M((0,T)) \colon \mu \geq 0\}$
    is easily seen to be closed w.r.t.\ weak-* convergence in $\mathfrak M((0,T))$.
    Hence, the weak convergence of a subsequence of $(z_i^{(n)})_{n \in \N}$ to $z_i$ in $\text{BV}(0,T)$ 
    implies $Dz_i \geq 0$, $i=1, \ldots, n$. We have thus shown $(\tau, \chi, z) \in \FF^{\mathrm{ext}}$. 	
\end{proof}

\begin{lemma}\label{lem:densityFF}
    The set
    	\begin{equation*}
    	\begin{aligned}
		\FF^{\mathrm{ext}}_h \coloneqq 
		\big\{  & (\tau, \chi, z) \in \FF^{\mathrm{ext}} \colon \\
		 & \quad \chi_1, \ldots, \chi_{n-1}, z_1, \ldots, z_n \text{ are piecewise constant a.e.\ in } (0,T) \big\}
    	\end{aligned}
	\end{equation*}
	is dense in $\FF^{\mathrm{ext}}$.
\end{lemma}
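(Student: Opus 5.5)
Given $(\tau,\chi,z)\in\FF^{\mathrm{ext}}$, I will approximate the $z$-components directly, since $\chi$ and $\tau$ are determined by $z$ through the affine relations $\chi_i = z_i - z_{i+1}$ and $\tau_i = \int_0^T(1-z_i)\,\d t$. The constraints on $z$ are: each $z_i$ is nondecreasing (since $Dz_i\ge 0$) with values in $[0,1]$, and $z_n\le z_{n-1}\le\dots\le z_1$ pointwise. Any approximation $z^{(k)}$ that satisfies the same constraints and converges to $z$ in $L^q(0,T)^n$ yields an element of $\FF^{\mathrm{ext}}_h$ by setting $\chi^{(k)}_i := z^{(k)}_i - z^{(k)}_{i+1}$ and $\tau^{(k)}_i := \int_0^T(1-z^{(k)}_i)\,\d t$. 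These converge to $\chi$ in $L^q$ and to $\tau$ in $\R^n$ by linearity and continuity; the latter holds because $L^q(0,T)\embed L^1(0,T)$ on the bounded interval. Nonnegativity of $\chi^{(k)}_i$ and the bound $\chi^{(k)}_i\le 1$ follow from the ordering of the $z^{(k)}_i$ and $z^{(k)}_i\in[0,1]$. Piecewise constant functions are trivially in $\BV$.

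For the construction I will use one common uniform partition $0=t_0<t_1<\dots<t_k=T$ with mesh $T/k$ for all $i$, and local averaging:
\begin{equation*}
z^{(k)}_i(t) := \frac{1}{t_j-t_{j-1}}\int_{t_{j-1}}^{t_j} z_i(s)\,\d s \quad\text{for } t\in[t_{j-1},t_j).
\end{equation*}
This operator is linear and positivity preserving, so the pointwise ordering $z_{i}\le z_{i-1}$ and the bounds $0\le z_i\le 1$ pass to the averages. Monotonicity also survives: since $z_i$ is nondecreasing, the average over $[t_{j-1},t_j)$ is at most $Z_i(t_j)$ (for the good representative $Z_i$, or any nondecreasing representative), which in turn is at most the average over $[t_j,t_{j+1})$. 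The step function $z^{(k)}_i$ is therefore nondecreasing, which means $Dz^{(k)}_i$ is a sum of nonnegative Dirac masses at the nodes, i.e.\ $Dz^{(k)}_i\ge 0$.

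The step I expect to need the most care is the convergence $z^{(k)}_i\to z_i$ in $L^q(0,T)$. I will argue via the standard fact that the averaging operators $P_k$ onto step functions on the uniform mesh are uniformly bounded on $L^q$ (by Jensen, $\|P_k\|\le 1$) and converge strongly on continuous functions. Density of $C([0,T])$ in $L^q(0,T)$, $q<\infty$, then gives strong convergence on all of $L^q$. Alternatively, for a monotone bounded function one can estimate directly: $|z^{(k)}_i - z_i|\le Z_i(t_j)-Z_i(t_{j-1})$ on each cell, so
\begin{equation*}
\|z^{(k)}_i-z_i\|_{L^1}\le \tfrac{T}{k}\,\big(Z_i(T)-Z_i(0)\big)\le \tfrac{T}{k},
\end{equation*}
and since all functions involved take values in $[0,1]$, we get $\|z^{(k)}_i-z_i\|_{L^q}^q\le\|z^{(k)}_i-z_i\|_{L^1}\to 0$. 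This second route is elementary and fully explicit, so I will use it, which completes the density argument.
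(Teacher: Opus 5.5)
Your proposal is correct and is essentially the paper's proof: a common uniform partition with cellwise averaging of $z$ (and hence, by linearity, of $\chi$), with $\tau$ recomputed from the averaged $z$. This in fact reproduces $\tau$ exactly, since averaging preserves integrals. The only difference is that you verify the two key properties by hand, whereas the paper cites Lemmas~2.5 and~2.6 of \cite{Buc25}: monotonicity of the averaged step functions, which gives $Dz_i^{(k)} \geq 0$, and the explicit bound $\|z_i^{(k)} - z_i\|_{L^1} \leq T/k$ for nondecreasing $[0,1]$-valued functions.
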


\begin{proof}
    Let $(\tau, \chi, z) \in \FF^{\mathrm{ext}}$ be arbitrary. For $N\in \N$, we consider the equidistant 
    partition 
    \begin{equation*}
        0 = t_0 < t_1 := \tfrac{T}{N} < t_2 := 2\, \tfrac{T}{N} < \ldots < t_{N-1} := (N-1) \tfrac{T}{N} < t_N = T
    \end{equation*}
    and define the piecewise constant approximations 
    \begin{equation*}
        \chi_i^{(N)} := \sum_{k=1}^N \frac{N}{T} \int_{t_{k-1}}^{t_k} \chi_i(t)\,\d t \,\chi_{(t_{k-1}, t_k]}, \quad
        z_j^{(N)} := \sum_{k=1}^N \frac{N}{T} \int_{t_{k-1}}^{t_k} z_j(t)\,\d t \,\chi_{(t_{k-1}, t_k]}
    \end{equation*}
    for $i = 1, \ldots, n-1$ and $j = 1, \ldots, n$. Since $\chi \in \BV(0,T)^{n-1}$ and $z\in \BV(0,T)^n$, 
    $\chi^{(N)}$ and $z^{(N)}$ converge strongly in $L^q(0,T)^{n-1}$ and $L^q(0,T)^n$, respectively, 
    to $\chi$ and $z$ as $N\to \infty$, cf., e.g., \cite[Lemma~2.5]{Buc25}.
    This implies that 
    \begin{equation*}
        \tau^{(N)} := \Big( \int_0^T \big(1 - z_i^{(N)}(t) \big) \d t\Big)_{i=1}^n
    \end{equation*}
    converges to $\tau$.
    It remains to show that $(\tau^{(N)}, \chi^{(N)}, z^{(N)}) \in \FF^{\mathrm{ext}}_h$.
    It is clear that the piecewise means and thus $\chi^{(N)}$ and $z^{(N)}$ satisfy the first to inequality 
    constraints in the definition of $\FF^{\mathrm{ext}}$, i.e., 
    $\chi^{(N)}_i = z_i^{(N)} - z_{i+1}^{(N)}$ and $z_{i}^{(N)} \leq z_{i-1}^{(N)}$ a.e.\ in $(0,T)$. 
    The non-negativity of $D z_i^{(N)}$ finally follows from \cite[Lemma~2.6]{Buc25}. 
\end{proof}

We are now in the position to state the relation between $\FF^{\mathrm{ext}}$ and the convex hull of $\FF$.

\begin{theorem}
    There holds that 
    \begin{equation*}
        \pi_{(\tau, \chi)} (\FF^{\mathrm{ext}}) = \overline{\conv(\FF)}^{\R^n \times L^q(0,T)^{n-1}},     
    \end{equation*}
    where $\pi_{(\tau, \chi)}$ denotes the projection onto the first two components of $\FF^{\mathrm{ext}}$, i.e., 
    \begin{equation*}
        \pi_{(\tau, \chi)} (\FF^{\mathrm{ext}}) := 
        \{ (\tau, \chi) \in \R^n \times L^q(0,T)^{n-1} | \;
        \exists\, z \in L^q(0,T)^n \colon (\tau, \chi, z) \in \FF^{\mathrm{ext}} \} .
    \end{equation*}     
\end{theorem}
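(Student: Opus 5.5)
We prove the two inclusions separately.

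\emph{Inclusion $\supseteq$.} The projection $\pi_{(\tau,\chi)}$ is linear. By Lemma~\ref{lem:closed, conv}, $\FF^{\mathrm{ext}}$ is convex, so $\pi_{(\tau,\chi)}(\FF^{\mathrm{ext}})$ is convex. Hence it suffices to show two things: that $\FF \subset \pi_{(\tau,\chi)}(\FF^{\mathrm{ext}})$, and that the projection is closed.

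For the first, take $(\tau,\chi)\in\FF$ and set $z_j := \chi_{[\tau_j,T)}$. Each $z_j$ is a nondecreasing $\{0,1\}$-valued BV function, so $Dz_j = \delta_{\tau_j}\ge 0$ (restricted to $(0,T)$). Since $\tau_{j-1}\le\tau_j$, we have $z_j\le z_{j-1}$. Moreover, $z_i - z_{i+1} = \chi_{[\tau_i,\tau_{i+1})}$ and $\int_0^T(1-z_j)\,\d t = \tau_j$, so $(\tau,\chi,z)\in\FF^{\mathrm{ext}}$.

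For closedness, let $(\tau^{(k)},\chi^{(k)})\to(\tau,\chi)$ with lifts $z^{(k)}$. The constraints force $z^{(k)}_j\in[0,1]$ and $|z^{(k)}_j|_{\TV}\le 1$, so the $z^{(k)}$ are bounded in $\BV(0,T)^n$. By the compact embedding $\BV\embed L^1$ together with uniform boundedness, a subsequence converges in $L^q$ (via interpolation with $L^\infty$). The limit $z$ is then a lift of $(\tau,\chi)$ by the closedness of $\FF^{\mathrm{ext}}$ (Lemma~\ref{lem:closed, conv}).

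\emph{Inclusion $\subseteq$.} Let $(\tau,\chi,z)\in\FF^{\mathrm{ext}}$. By Lemma~\ref{lem:densityFF} we may approximate it by elements of $\FF^{\mathrm{ext}}_h$. Since $\pi_{(\tau,\chi)}$ is continuous and the target set is closed, it suffices to treat piecewise constant $z$ on a partition $0=t_0<\dots<t_N=T$. Write $z_j=\sum_k z_j^k\chi_{(t_{k-1},t_k]}$ with values $0\le z_n^k\le\dots\le z_1^k\le 1$ that are nondecreasing in $k$.

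The key step is a \emph{level-set (layer-cake) decomposition}. For $s\in(0,1)$ define
\[
\tau_j(s):=\inf\{t : Z_j(t)\ge s\}\wedge T,
\]
so that $\chi_{\{z_j\ge s\}}=\chi_{[\tau_j(s),T)}$ a.e.; this holds because $z_j$ is nondecreasing. Since $z_j\le z_{j-1}$, we get $\tau_{j-1}(s)\le\tau_j(s)$, i.e.\ $\tau(s)\in\PP$.

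Then $z_j=\int_0^1\chi_{[\tau_j(s),T)}\,\d s$, hence $\chi_i = z_i - z_{i+1} = \int_0^1\chi_{[\tau_i(s),\tau_{i+1}(s))}\,\d s$. Likewise, Fubini gives $\tau_j=\int_0^T(1-z_j)\,\d t=\int_0^1\tau_j(s)\,\d s$.

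For piecewise constant $z$, the map $s\mapsto\tau(s)$ takes only finitely many values, each on a subinterval of $(0,1)$ of length $\lambda_\ell$ with $\sum_\ell\lambda_\ell=1$. Consequently $(\tau,\chi)=\sum_\ell\lambda_\ell(\tau^\ell,\chi^\ell)$ is an honest finite convex combination of elements of $\FF$. This places $\pi_{(\tau,\chi)}(\FF^{\mathrm{ext}}_h)\subset\conv(\FF)$, and taking closures finishes this inclusion.

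\emph{Main obstacle.} The delicate point is the level-set step: ensuring the a.e.\ identities $\chi_{\{z_j\ge s\}}=\chi_{[\tau_j(s),T)}$ and the ordering $\tau_{j-1}(s)\le\tau_j(s)$, using the good representative and the monotonicity coming from $Dz_j\ge 0$. Lemma~\ref{lem:densityFF} was presumably introduced precisely so this can be done with finitely many values, avoiding a Bochner-integral version of Carathéodory.
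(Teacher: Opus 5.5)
Your proposal is correct and follows the paper's proof. For $\supseteq$ you use the lift $z_j=\chi_{[\tau_j,T)}$ together with convexity and closedness of the projection, and for $\subseteq$ you use density of $\FF^{\mathrm{ext}}_h$ plus a level-set decomposition of the monotone $z_j$; your layer-cake over $s\in(0,1)$ reduces to the paper's convex combination with weights $v_i-v_{i-1}$ over the finitely many values of $Z_1,\dots,Z_n$. The only minor difference is in the closedness step: you obtain a strongly $L^q$-convergent subsequence from the $\BV$ compact embedding and the bound $|z_j^{(k)}|_{\TV}\le 1$, while the paper takes a weakly convergent subsequence in $L^q(0,T)^n$ and uses that the closed convex set $\FF^{\mathrm{ext}}$ is weakly sequentially closed; both arguments work.
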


\begin{proof}
	To prove ``$\subset$'', we first consider an arbitrary element  $(\tau, \chi,  z) \in \FF^{\mathrm{ext}}_h$ and denote 
	the good representatives of $z = (z_1, \ldots, z_n)$ by $Z_1, \ldots, Z_n$. Since these are 
	continuous from the right in $[0,T)$, there exists a partition $0 = t_0 < t_1 < t_2 < \ldots < t_N \leq T$ 
	of the time horizon $[0,T]$ such that $Z_1, \ldots, Z_n$ jump at most in the grid points, i.e., 
	$Z_1, \ldots, Z_n$ are constant in $(t_{k-1}, t_{k})$ for all $k = 1, \ldots, N$. Let further 
	\begin{equation}\label{eq:zvalues}
		0 \leq v_1 \leq \ldots \leq v_m \leq 1, \quad (m\in \N)
	\end{equation}
	be the function values of $Z_1, \ldots, Z_n$ in ascending order and set $v_0 := 0, v_{m+1} := 1$. 
    Note that $v_m \leq 1$ due to $z \in \BV(0,T;[0,1])$. Note moreover that, by construction, 
    \begin{equation}\label{eq:convexcomb}
        \sum_{i = 1}^{m+1} (v_i - v_{i-1}) = v_{m+1} - v_0 = 1.
    \end{equation}    	
	Now, for each $i = 1, \ldots, m+1, \, j = 1, \ldots, n$, we set 
	\begin{align*}
		\tau_{j}^{(i)} &:= \inf \{ t \in \{ t_0, \ldots, t_N \}: Z_j(t) \geq v_i\}, 
	\end{align*}
	with the convention $\inf \emptyset := T$.  
	Because of $Z_1 \geq Z_2 \geq \ldots \geq Z_n$, we then have 
	\begin{equation}\label{eq:tauiinP}
	    \tau_{1}^{(i)} \leq \tau_{2}^{(i)} \leq \ldots \leq \tau_{n}^{(i)} \quad \forall\, i = 1, \ldots, m+1
	\end{equation}
    and the construction of $\tau_{j}^{(i)}$ implies 
    \begin{equation*}
	\begin{aligned}
		z_j & = \sum_{i = 1}^{m+1} (v_i - v_{i-1}) \chi_{[\tau_{j}^{(i)}, T]}, & & j = 1, \ldots, n, \\ 
		\chi_j &=  z_j - z_{j+1}
		=  \sum_{i = 1}^{m+1} (v_i - v_{i-1})  \chi_{[\tau_{j}^{(i)}, \tau_{j+1}^{(i)})}, & & j = 1, \ldots, n-1,
	\end{aligned}    
    \end{equation*}
	a.e.\ in $(0,T)$. 
	From that and \eqref{eq:convexcomb}, it follows that 
    \begin{equation*}
        \tau_j = \int_0^T (1 - z_j)\d t 
        = \sum_{i=1}^{m+1} (v_i - v_{i-1}) \int_0^T (1 - \chi_{[\tau_{j}^{(i)}, T]}) \d t
        = \sum_{i=1}^{m+1} (v_i - v_{i-1})  \tau_{j}^{(i)}
    \end{equation*}    	
	Therefore, if we set 
    \begin{equation*}
        \tau^{(i)} := \big( \tau_{1}^{(i)}, \ldots,  \tau_{n}^{(i)} \big), \quad
        \chi^{(i)} := \big( \chi_{[\tau_{1}^{(i)}, \tau_{2}^{(i)})}, \ldots, \chi_{[\tau_{n-1}^{(i)}, \tau_{n}^{(i)})} \big),
    \end{equation*}
	then \eqref{eq:tauiinP} implies $(\tau^{(i)}, \chi^{(i)}) \in \FF$ for every $i = 1, \ldots, m+1$ 
	and we obtain
	\begin{equation*}
	    \tau = \sum_{i=1}^{m+1} (v_i - v_{i-1})\tau^{(i)}
	    \quad \text{and} \quad 
	    \chi = \sum_{i=1}^{m+1} (v_i - v_{i-1})\chi^{(i)}.
	\end{equation*}
    From \eqref{eq:zvalues} and \eqref{eq:convexcomb} it follows that $(\tau, \chi) \in \conv(\FF)$.
    Since $(\tau, \chi, z) \in \FF^{\mathrm{ext}}_h$ was arbitrary, this implies that 
    $\pi_{(\tau, \chi)} (\FF^{\mathrm{ext}}_h )  \subset \conv(\FF)$ and the density of 
    $\FF^{\mathrm{ext}}_h$ in $\FF^{\mathrm{ext}}$ by Lemma~\ref{lem:densityFF} yields     
    $\pi_{(\tau, \chi)} (\FF^{\mathrm{ext}}) \subset \overline{\conv(\FF)}$. 

    For the reverse inclusion, we first note that $\FF \subset \pi_{(\tau, \chi)} (\FF^{\mathrm{ext}})$, since, 
    given $(\tau_1, \ldots, \tau_n, \chi) \in \FF$, we can simply set $z_j := \chi_{[\tau_j, T)} \in  \BV(0,T;[0,1])$, $j = 1, \ldots, n$, 
    and observe that the conditions in $\FF^{\mathrm{ext}}$ are fulfilled with this choice of $z$ due to 
    $\chi_i = \chi_{[\tau_i, \tau_{i+1})}$ by the definition of $\FF$.
    Since $\FF^{\mathrm{ext}}$ is a convex set, its projection $\pi_{(\tau, \chi)} (\FF^{\mathrm{ext}})$ is convex as well. 
    To prove the inclusion, it thus remains to show that $\pi_{(\tau, \chi)} (\FF^{\mathrm{ext}})$ is closed. 
    For this purpose, let $\{(\tau^{(n)}, \chi^{(n)})\}_{n \in \N} \subset \pi_{(\tau, \chi)} (\FF^{\mathrm{ext}})$ 
    be a sequence converging to some $(\tau, \chi) \in \R^n \times L^q(0,T)^{n-1}$. 
    By assumption, there exists a sequence $\{z^{(n)}\}_{n \in \N} \subset L^q(0,T)^n$ such that 
    $(\tau^{(n)}, \chi^{(n)}, z^{(n)})_{n \in \N} \subset \FF^{\mathrm{ext}}$ for all $n\in \N$. 
    Due to $z^{(n)} \in [0,1]^n$ a.e.\ in $(0,T)$, 
    there exists a weakly converging subsequence $\{z^{(n_k)}\}_{k\in \N}$, i.e., $z^{(n_k)} \weak z^*$ in 
    $L^q(0,T)^n$ as $k\to \infty$. Consequently, the sequence $\{(\tau^{(n_k)}, \chi^{(n_k)}, z^{(n_k)})\}_{k\in \N}$ 
    converges weakly to $(\tau, \chi, z^*)$. Since $\FF^{\mathrm{ext}}$ is closed and convex 
    by Lemma~\ref{lem:closed, conv}, it is weakly sequentially closed such that $(\tau, \chi, z^*) \in \FF^{\mathrm{ext}}$ 
    and therefore $(\tau, \chi) \in \pi_{(\tau, \chi)} (\FF^{\mathrm{ext}})$ 
    which shows the closedness of $\pi_{(\tau, \chi)} (\FF^{\mathrm{ext}})$.
\end{proof}

The set $\FF^{\mathrm{ext}}$ can be interpreted as an \emph{extended  formulation}, i.e., 
a characterization of the convex hull of the feasible set in terms of (affine-)linear equations
and inequalities at the price of introducing additional variables (in our case the variable~$z$). 
In finite dimensional optimization and, in particular, in combinatorial optimization, 
extended formulations are a well established tool for the computation of dual bounds or even exact solutions, we
only refer to \cite{CCZ13} and the references therein. Concerning infinite dimensional problems, 
the research on this topic has just been initiated in \cite{Buc25}. 

A particularly interesting fact about our extended formulation is that the switching times 
depend linearly on the new variable $z$ and are therefore superfluous. In other words, 
the convexified problem  \eqref{eq:convocp} can be formulated in terms of the new variable $z$ only
by using that 
\begin{equation*}
    \chi_i = z_i - z_{i-1}
    \quad \text{and} \quad 
    \tau_i = \int_0^T (1 - z_i) \d t.
\end{equation*}
This raises the question whether the switching points $\tau_1, \ldots, \tau_n$ are useful 
optimization variables when it comes to convexification.
Our findings indicate that the switching point approach might not be suitable 
for global optimization of switching processes.

We expect that the extended formulation $\FF^{\mathrm{ext}}$ can be used for the construction 
of a branch-and-bound algorithm following the lines of \cite{BGM25}. 
This however goes beyond the scope of this work and is subject to future research.


\section*{Acknowledgment}

We are very grateful to Daniel Wachsmuth (U W\"urzburg) for pointing out reference \cite{KM22} to us.

\begin{appendix}

\section{Autonomous parabolic equations}

\begin{lemma}\label{lem:normGt}
    Let $t > 0$ be given. For every $u \in L^q(0,t;X)$ and every $y_0 \in Y_q$, the equation
    \begin{equation*}
        y' + A y = u \quad \text{in } (0,t), \quad y(0) = y_0.
    \end{equation*}
    admits a unique solution 
    \begin{equation*}
        y \in \W^q(0,t;\DD, X) \embed C([0,t];Y_q).
    \end{equation*}
    The associated solution operator 
    $G_t : L^q(0,t;X) \times Y_q \to C([0,t];Y_q)$ is linear and bounded. 
    Moreover, there is a constant $C_T$ such that 
    \begin{equation*}
        \sup_{t\in [0,T]} \|G_t\|_{L(L^q(0,t;X) \times Y_q, C([0,t];Y_q))} \leq C_T.
    \end{equation*}     
\end{lemma}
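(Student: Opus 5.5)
The key difficulty is uniformity in $t$: the norm of $G_t$ could a priori blow up as $t\to 0$ if one applied maximal regularity separately on each interval $(0,t)$. I would avoid this by working with the single solution operator on $(0,T)$ and restricting.

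Existence and uniqueness on $(0,t)$ come from Lemma~\ref{lem:maxparreg}(i) applied on $(0,t)$. Maximal parabolic regularity on $(0,T)$ transfers to $(0,t)$, for instance by extending the data.

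For the bound, let $u\in L^q(0,t;X)$ and $y_0\in Y_q$. Define the extension by zero, $\tilde u := u$ on $(0,t)$ and $\tilde u := 0$ on $(t,T)$. Then $\tilde u\in L^q(0,T;X)$ with $\|\tilde u\|_{L^q(0,T;X)} = \|u\|_{L^q(0,t;X)}$. By Lemma~\ref{lem:maxparreg}(i) on $(0,T)$, there is a unique $\tilde y = G_T(\tilde u,y_0)\in\W^q(\DD,X)$ with
\[
\|\tilde y\|_{\W^q(\DD,X)}\le \|G_T\|\,\big(\|u\|_{L^q(0,t;X)}+\|y_0\|_{Y_q}\big).
\]
Restricted to $(0,t)$, $\tilde y$ belongs to $\W^q(0,t;\DD,X)$ and solves the equation there with initial value $y_0$. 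By uniqueness on $(0,t)$, this gives $\tilde y|_{(0,t)} = G_t(u,y_0)$. Therefore
\[
\|G_t(u,y_0)\|_{C([0,t];Y_q)}\le \|\tilde y\|_{C([0,T];Y_q)}\le c_{emb}\,\|G_T\|\,\big(\|u\|_{L^q(0,t;X)}+\|y_0\|_{Y_q}\big),
\]
using the embedding $\W^q(\DD,X)\embed C([0,T];Y_q)$ on the fixed interval $(0,T)$. This yields $C_T := c_{emb}\|G_T\|$, which is independent of $t$.

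The main point to check is uniqueness on $(0,t)$. This is the only place where the $t$-dependent spaces appear. If $w\in\W^q(0,t;\DD,X)$ solves the homogeneous problem, I would extend $w$ beyond $t$ by solving on $(t,T)$ with initial value $w(t)\in Y_q$. Uniqueness of the glued strong solution on $(0,T)$ then forces $w=0$. The gluing uses that $W^{1,q}$ functions which are continuous at $t$ concatenate into $W^{1,q}$. Linearity of $G_t$ is immediate from uniqueness.

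The analogous statement for the backward adjoint operators $K_t$, which is used in Lemma~\ref{lem:adjointeq}, follows identically. There one extends by zero on $(0,T-t)$ after time reversal, or directly uses Lemma~\ref{lem:maxparreg}\ref{it:maxparreg3}.
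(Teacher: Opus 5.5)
Your argument is correct, but it takes a different route from the paper. The paper splits $G_t = K_t + \iota_t$ into a zero-initial-value part and a homogeneous part. For $K_t$ it cites \cite[Lemma~3.16]{HMRR09}: translation and extension by zero show that the norm of $K_t$ does not increase as $t$ decreases. For $\iota_t$ it uses that $-A$ generates an analytic semigroup, derives $\|e^{-tA}\|_{L(\DD,\DD)}\le\|e^{-tA}\|_{L(X,X)}$ by commuting $A$ with the semigroup, and interpolates to obtain $\|e^{-tA}\|_{L(Y_q,Y_q)}\le Me^{\omega t}$. You instead extend only the right-hand side by zero, keep the initial value, solve once on $(0,T)$, and restrict. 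This handles both data in one step, using only the isomorphism of Lemma~\ref{lem:maxparreg}(i) on the fixed interval $(0,T)$, the trace embedding on that same interval, and the fact that the sup norm does not increase under restriction. It therefore avoids semigroup theory (in particular the fact that maximal regularity implies generation of an analytic semigroup) and the external lemma. It also justifies the existence and uniqueness on $(0,t)$ that the paper merely asserts, and it even gives $\|G_t\|_{L(L^q(0,t;X)\times Y_q,\,\W^q(0,t;\DD,X))}\le\|G_T\|$, since restriction does not increase the $\W^q$-norm either. You are right that the embedding must be taken on $(0,T)$ rather than on $(0,t)$: the constant of $\W^q(0,t;\DD,X)\embed C([0,t];Y_q)$ blows up as $t\to 0$, as constant functions show. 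In exchange, the paper's route gives explicit bounds for the semigroup on $Y_q$, but these are not needed for this statement.
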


\begin{proof}
    The existence and uniqueness of the solution is guaranteed by maximal parabolic regularity of $A$. 
    Using the linearity of the equation, we split $G_t$ into two parts, namely 
    \begin{equation*}
        G_t := K_t + \iota_t, 
    \end{equation*}
    where $K_t:  L^q(0,t;X) \to C([0,t];Y_q)$ is the solution operator to 
    \begin{equation*}
        y' + A y = u \quad \text{in } (0,t), \quad y(0) = 0, 
    \end{equation*}
    while $\iota_t: Y_q \to C([0,t];Y_q)$ is the solution operator to 
    \begin{equation*}
        y' + A y = 0 \quad \text{in } (0,t), \quad y(0) = y_0.
    \end{equation*}
    By applying translation and extension by zero, it is shown in \cite[Lemma~3.16]{HMRR09} 
    that the norm of $K_t$ does not increase if $t$ is decreased and thus 
    \begin{equation*}
        \sup_{t\in [0,T]} \|K_t\|_{L(L^q(0,t;X), C([0,t];Y_q))}
        \leq \|K_T\|_{L(L^q(0,t;X), C([0,t];Y_q))}.
    \end{equation*}     
    Concerning $\iota_t$, we first observe that
    \begin{equation*}
        \|e^{-tA}\|_{L(\DD, \DD)} \leq \|e^{-tA}\|_{L(X,X)},
    \end{equation*}
    since
    \begin{equation*}
    \begin{aligned}
        \|e^{-tA} x \|_{\DD}
        &= \|A e^{-tA} x\|_X\\
        &= \|e^{-tA} A x\|_X
        \leq \|e^{-tA} \|_{L(X,X)} \|A x\|_X
        = \|e^{-tA} \|_{L(X,X)} \|x\|_\DD
        \quad \forall\, x\in \DD.    
    \end{aligned}
    \end{equation*}        
    Via interpolation, this implies 
    \begin{equation*}
        \|e^{-tA}\|_{L(Y_q, Y_q)} \leq \|e^{-tA}\|_{L(X,X)} \leq M\, e^{\omega t},
    \end{equation*}
    with constants $M>0$ and $\omega \in \R$, where the last inequality is a consequence of $A$
    being the generator of an analytic semigroup. Thus we obtain
    \begin{equation*}
    \begin{aligned}
        \sup_{t\in [0,T]} \|\iota_t\|_{L(Y_q, C([0,t];Y_q))}
        = \sup_{t\in [0,T]} \max_{s\in [0,t]} \|e^{-sA}\|_{L(Y_q, Y_q)} \leq \max\{M, M e^{\omega T}\}.
    \end{aligned}
    \end{equation*}
\end{proof}

\begin{lemma}\label{lem:adjlip}
    For every $g\in C([0,T]; Y_q^*)$, the unique solution of the equation 
    \begin{equation}\label{eq:psigl}
        - \psi' + A^*\psi = g, \quad \psi(T) = 0,
    \end{equation}
    is Lipschitz continuous on $[0,T]$ with values in $\DD^*$ and satisfies 
    \begin{equation*}
        \|\psi\|_{C^{0,1}(0,T;\DD^*)} \leq C'\, \|g\|_{C([0,T];Y_q^*)}
    \end{equation*}
    with a constant $C' >0$ independent of $g$.
\end{lemma}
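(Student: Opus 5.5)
We need to prove Lemma~\ref{lem:adjlip}: for $g\in C([0,T];Y_q^*)$, the solution $\psi$ of $-\psi' + A^*\psi = g$, $\psi(T)=0$, is Lipschitz with values in $\DD^*$, with norm bounded by $C'\|g\|_{C([0,T];Y_q^*)}$.

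The approach is to write $\psi$ via the variation-of-constants formula for the adjoint semigroup and to estimate $\psi'=A^*\psi-g$ in $L^\infty(0,T;\DD^*)$. A function in $W^{1,\infty}(0,T;\DD^*)$ is Lipschitz with constant $\|\psi'\|_{L^\infty(0,T;\DD^*)}$. Since $Y_q^*\embed\DD^*$, the term $g$ is harmless. So the task reduces to bounding $\|A^*\psi(t)\|_{\DD^*}$, i.e.\ $\|\psi(t)\|_{X^*}$ (recall that $\|A^*\cdot\|_{\DD^*}$ is equivalent to $\|\cdot\|_{X^*}$ because $0$ lies in the resolvent set), uniformly in $t$ by $C\|g\|_{C([0,T];Y_q^*)}$.

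\textbf{Step 1 (representation).} Use that $-A$ generates an analytic semigroup on $X$, which follows from maximal regularity as in the proof of Lemma~\ref{lem:normGt}. Then $-A^*$ generates the analytic semigroup $e^{-sA^*}=(e^{-sA})^*$ on $\DD^*$, with $X^*$ as its domain. By uniqueness from Lemma~\ref{lem:maxparreg}\ref{it:maxparreg3},
\[
\psi(t)=\int_t^T e^{-(s-t)A^*}g(s)\,\d s .
\]

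\textbf{Step 2 (smoothing estimate).} Show that
\[
\|A^*e^{-\sigma A^*}\|_{L(Y_q^*,\DD^*)}\le C\sigma^{-1/q'} \quad\text{for } \sigma\in(0,T].
\]
Since $Y_q^*=(\DD^*,X^*)_{1/q,q'}$ by \eqref{eq:functordual}, this follows by interpolating two bounds:
\begin{itemize}
\item the analytic bound $\|A^*e^{-\sigma A^*}\|_{L(\DD^*,\DD^*)}\le C\sigma^{-1}$;
\item the bound $\|A^*e^{-\sigma A^*}\|_{L(X^*,\DD^*)}\le C$, which holds because $A^*$ and the semigroup commute and $A^*\in L(X^*,\DD^*)$.
\end{itemize}
Interpolating with parameter $1/q$ gives the exponent $-(1-1/q)=-1/q'$. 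Since $1/q'<1$, this singularity is integrable. Hence
\[
\|A^*\psi(t)\|_{\DD^*}\le C\int_t^T (s-t)^{-1/q'}\,\d s\,\|g\|_{C([0,T];Y_q^*)}\le C\,q\,T^{1/q}\,\|g\|_{C([0,T];Y_q^*)} .
\]
Justifying that $A^*$ may be moved inside the Bochner integral uses the closedness of $A^*$.

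\textbf{Step 3 (conclusion).} From the equation, $\psi'=A^*\psi-g\in L^\infty(0,T;\DD^*)$, with norm bounded by $C\|g\|_{C([0,T];Y_q^*)}$. This yields the Lipschitz estimate. The sup-norm part of the $C^{0,1}$ norm follows from $\psi(T)=0$, again bounded by $CT\|g\|_{C([0,T];Y_q^*)}$.

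\textbf{Main obstacle.} The crux is Step 2. One must make sure the interpolation between $\DD^*$ and $X^*$ with the correct parameter gives the exponent $1/q'<1$. One must also check that the analytic semigroup estimates transfer to $A^*$ on $\DD^*$; this uses duality and the reflexivity from Assumption~\ref{assu:spaces}.

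If analyticity is uncomfortable, an alternative route is available. Maximal $L^p$-regularity holds for all $p$ by Lemma~\ref{lem:maxparreg}, and $g$ is continuous, hence in $L^p$ for every $p$. This gives $\psi'\in L^p(0,T;\DD^*)$ for all $p<\infty$, but not $p=\infty$. That is why the semigroup argument above is the one I would use.
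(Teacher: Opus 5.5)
Your proof is correct. Its skeleton is the paper's: write $\psi$ via the variation-of-constants formula for the analytic semigroup generated by $-A^*$ on $\DD^*$, bound $\|A^*\psi(t)\|_{\DD^*}$ uniformly in $t$ by an integrable smoothing estimate, and read off $\psi' = A^*\psi - g$. The genuine difference is how you get the smoothing estimate.

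The paper goes through fractional powers. It uses the embedding chain $Y_q^* = (\DD^*,X^*)_{1/q,q'} \embed (\DD^*,X^*)_{\theta,1} \embed D((A^*)^\theta)$ for $\theta<1/q$ (Triebel), then the bound $\|(A^*)^{1-\theta}e^{-\sigma A^*}\|\le C\sigma^{\theta-1}$. It also needs a regularity theorem from Pazy to know that the mild solution is classical.

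You instead interpolate, by the real method, the two endpoint bounds $C\sigma^{-1}$ on $\DD^*$ and $C$ on $X^*$ for the single operator $A^*e^{-\sigma A^*}$. This lands exactly on $Y_q^*$ and yields the sharp singularity $\sigma^{-1/q'}$, whereas the paper only gets $\sigma^{\theta-1}$ with $\theta<1/q$. It needs neither fractional powers nor the Triebel embedding, so it is the more elementary route.

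In exchange, your route needs the strong solution from Lemma~\ref{lem:maxparreg}\ref{it:maxparreg3} to be given by the mild formula. Uniqueness alone does not give this, since you never show that the mild formula lies in $\W^{q'}(X^*,\DD^*)$. The standard argument does: for the strong solution, differentiate $s\mapsto e^{-(s-t)A^*}\psi(s)$ on $[t,T]$, integrate, and use $\psi(T)=0$.

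A minor notational point: the semigroup on $\DD^*$ is the adjoint of the restriction of $e^{-sA}$ to $\DD$ (the extrapolation of $(e^{-sA})^*$ from $X^*$), not literally $(e^{-sA})^*$.
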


\begin{proof}
    Since $X^* \subset D(A^*) = \{v\in \DD^*: \|A^* v\|_{\DD^*} < \infty\}$, there holds 
    according to \cite[Theorem~1.15.2(d)]{Tri78} that
    \begin{equation}\label{eq:triebel}
        Y_q^* = (\DD^*, X^*)_{1/q, q'}
        \embed (\DD^*, X^*)_{\theta, 1} 
        \embed (\DD^*, D(A^*))_{\theta, 1}
        \embed D((A^*)^\theta)
    \end{equation}
    for all $0 <\theta <1/q$.
    Therefore, $g\in C([0,T];D((A^*)^\theta))$ and consequently, by \cite[Theorem~4.3.6]{Paz83}, 
    the mild solution of \eqref{eq:psigl} given by 
    \begin{equation*}
        \psi(t) = \int_t^T e^{-(s - t)A^*}g(s)\,\d s 
    \end{equation*}
    is actually a classical solution, i.e., $\psi$ is continuous with values in $\DD^*$ on $[0,T)$,
    $\psi(t) \in X^*$ for all $t\in (0,T)$, and $\psi$ is continuously differentiable 
    with values in $\DD^*$ in $(0,T)$ and satisfies \eqref{eq:psigl} in all $t\in (0,T)$.
    Note that $A^*$ is the infinitesimal generator of an analytic semigroup, since it has maximal parabolic regularity 
    by Lemma~\ref{lem:maxparreg}\ref{it:maxparreg3}, cf.\ also \cite{Dor93}.
    This allows us to apply \cite[Theorem~6.13]{Paz83} to deduce
    \begin{equation*}
    \begin{aligned}
        \| A^* e^{-(s - t)A^*}g(s)\|_{\DD^*}
        &  =  \| (A^*)^{1-\theta} e^{-(s - t)A^*} (A^*)^\theta g(s) \|_{\DD^*} \\
        & \leq C\, |s-t|^{\theta - 1} \, \|(A^*)^\theta g(s) \|_{\DD^*}
        \leq C\, |s-t|^{\theta - 1} \, \| g(s)\|_{Y_q^*},
    \end{aligned}
    \end{equation*}        
    where we used \eqref{eq:triebel} for the last inequality. Since $\psi$ is a classical solution, it 
    satisfies \eqref{eq:psigl} pointwise in time and thus, we find for every $t\in (0,T)$ that
    \begin{equation*}
    \begin{aligned}
        \|\psi'(t)\|_{\DD^*} 
        & = \|A^* \psi(t) - g(t)\|_{\DD^*} \\
        & \leq \int_t^T \| A^* e^{-(s - t)A^*}g(s)\|_{\DD^*} \d s + \|g\|_{C([0,T];\DD^*)} \\
        & \leq C \int_t^T |s-t|^{\theta - 1}\,\d s\, \| g \|_{C([0,T]; Y_q^*)} + \|g\|_{C([0,T];\DD^*)} \\
        & \leq C ( T^\theta +1 ) \| g \|_{C([0,T]; Y_q^*)} .
    \end{aligned}
    \end{equation*}
    Thus, $\|\psi'(\cdot)\|_{\DD^*}$ is uniformly bounded on $(0,T)$ and consequently, 
    $\psi$ is Lipschitz continuous on the whole interval $[0,T]$ with Lipschitz constant $C(T^\theta + 1)$.
\end{proof}

    \section{On the lack of differentiability of objectives involving point evaluations}\label{sec:nodiff}
    
    This section is devoted to an elementary example demonstrating the lack of differentiability of 
    objectives involving point evaluations in time. 
    We consider the probably most simple setting where 
    \begin{equation*}
        \DD = X = \R, \quad A = f \equiv 0, \quad n = 2,
    \end{equation*}
    which leads to the following ODE as state equation
    \begin{equation*}
        y' = \chi_{[\tau_1, \tau_2)} \psi_1, \quad y(0) = 0.
    \end{equation*}
    We first investigate an objective of the form
    \begin{equation}\label{eq:ptobj1}
        J(y, \tau) := g(y(\tau_2))
    \end{equation}        
    with a continuously differentiable function $g:\R \to \R$.    
    For a given couple of switching times $\tau = (\tau_1, \tau_2)$, $\tau_1, \tau_2 \in \R$,
    the unique solution of the ODE reads as follows: If $\tau_1 \leq \tau_2$, $\tau_2 > 0$, and $\tau_1 < T$, 
    then the solution equals the continuous function
    \begin{equation*}
        y_\tau(t) = 
        \begin{cases}
            0, & t < \tau_1, \\
            \psi_1 (t - \max\{\tau_1,0\}), & \tau_1 \leq t < \tau_2, \\
            \psi_1(\tau_2 - \max\{\tau_1,0\}), & \tau_2 \leq t \leq T,
        \end{cases}
        \qquad t \in [0,T].
    \end{equation*}
    If else $\tau_1 > \tau_2$ or $\tau_1 \leq \tau_2 \leq 0$ or $T \leq \tau_1 \leq \tau_2$, 
    then the solution is simply $y_\tau \equiv 0$.
    Let now $\bar\tau\in (0,T)$ and $h_2 \in \R$ be arbitrary and set 
    \begin{equation*}
        \tau = (\tau_1, \tau_2) = (\bar\tau,\bar\tau), \quad 
        h = (h_1, h_2) = (0, h_2).
    \end{equation*}
    Then we obtain 
    \begin{equation*}
    \begin{aligned}
        \frac{y_{\tau + h}(\tau_2 + h_2) - y_\tau(\tau_2)}{\|h\|}
        = 
        \begin{cases}
            0, & h_2 \leq 0, \\
            \psi_1, & h_2 > 0
        \end{cases} 
    \end{aligned}
    \end{equation*}
    and the chain rule for Hadamard differentiable functions implies for the directional derivative of the reduced objective
    \begin{equation*}
        J'(y_\tau, \tau; h) = 
        \begin{cases}
            0, & h_2 \leq 0,\\        
            g'(0)\psi_1, & h_2 >0,
        \end{cases}
    \end{equation*}
    illustrating that a reduced objective of the form \eqref{eq:ptobj1} as a function of $\tau$ only
    is in general indeed not differentiable.
    
    Similarly, an objective of type
    \begin{equation}\label{eq:ptobj2}
        J(y, \tau) := g(y(\hat \tau))
    \end{equation}
    with $g$ as above and $\hat \tau \in[0,T]$ fixed is in general not differentiable.
    Indeed, if $\hat \tau > 0$, then we choose 
    \begin{equation*}
        \tau = (\tau_1, \tau_2) = (0, \hat \tau), \quad h = (0, h_2)
    \end{equation*}
    and a similar computation as above gives
    \begin{equation*}
        J'(y_\tau, \tau; h) = 
        \begin{cases}
            0, & h_2 \geq 0,\\        
            - g'(\psi_1\hat \tau)\psi_1, & h_2 < 0.
        \end{cases}
    \end{equation*}
    If $\hat\tau = 0$, then we set $\tau = (0,T)$ and $h = (h_1, 0)$ to obtain
    \begin{equation*}
        J'(y_\tau, \tau; h) = 
        \begin{cases}
            0, & h_1 \leq 0,\\        
            g'(0)\psi_1, & h_1 > 0
        \end{cases}
    \end{equation*}
    such that in both cases, $J$ is only directionally differentiable.

    \begin{remark}\label{rem:nodiff}
        The last example demonstrates that the replacement of $\chi_{[\tau_1, \tau_2)}$ by $\bar\chi_{[\tau_1, \tau_2)}$ 
        as defined in \eqref{eq:defbarchi} alone does not ensure differentiability, since $\tau_2 > \tau_1$ in this example and thus 
        $\chi_{[\tau_1, \tau_2)} = \bar\chi_{[\tau_1, \tau_2)}$.
        This illustrates that, even with this modification of $\chi_{[\tau_1, \tau_2)}$, the switching-point-to-state map is 
        in general not differentiable when considered as a mapping with values in $C([0,T])$. 
        This is in essence the reason for treating the state equation in weak form.
    \end{remark}

\end{appendix}


\bibliographystyle{plain}
\bibliography{Literaturverzeichnis}

\end{document}